\newfont{\gothic}{eufm10}
\def\Z{{\mathbb{Z}}}                   \def\R{{\RR}} \def\Q{{\mathbb{Q}}}
\def\RR{{\mathbb{R}}}        \def\N{{\mathbb{N}}}
\newtheorem{theorem}{Theorem}[section]
\newtheorem{lemma}[theorem]{Lemma}
\newtheorem{proposition}[theorem]{Proposition}
\newtheorem{definition1}[theorem]{Definition}
\newenvironment{definition}{\begin{definition1}\rm}{\hfill $\triangle$\end{definition1}}
\newenvironment{proof}{\addvspace\baselineskip\noindent{\it
Proof:}}{\hspace*{\fill}         $\Box$\par\addvspace\baselineskip}
\newenvironment{proofof}[1]{\addvspace\baselineskip\noindent{\it Proof
{\bf (of #1)}:}}{\hspace*{\fill} $\Box$\par\addvspace\baselineskip}
\newtheorem{remark1}[theorem]{Remark}
\newtheorem{example1}[theorem]{Example}
\newenvironment{example}{\begin{example1}\rm }{\hfill $\triangle$\end{example1}}
\def\barray{\begin{eqnarray*}}             \def\earray{\end{eqnarray*}}
\def\beq{\begin{equation}} \def\eeq{\end{equation}}
\title{The laminations of a crystal near an anti-continuum limit}
\author{Vincent Knibbeler\thanks{Mathematical Modelling Lab, Northumbria University, Newcastle, UK, $\hfill$
{\tt vincent.knibbeler@northumbria.ac.uk}.}, Blaz Mramor\thanks{Institute of Mathematics, Albert-Ludwigs-Universit\"at, Freiburg, Germany, {\tt blazmramor@hotmail.com}.} and Bob Rink\thanks{Department of Mathematics, VU University, Amsterdam, The Netherlands, {\tt b.w.rink@vu.nl}.}} 
\date{\today}
\begin{document}\hyphenation{boun-da-ry mo-no-dro-my sin-gu-la-ri-ty ma-ni-fold ma-ni-folds re-fe-rence se-cond se-ve-ral dia-go-na-lised con-ti-nuous thres-hold re-sul-ting fi-nite-di-men-sio-nal ap-proxi-ma-tion pro-per-ties ri-go-rous mo-dels mo-no-to-ni-ci-ty pe-ri-o-di-ci-ties mi-ni-mi-zer mi-ni-mi-zers know-ledge ap-proxi-mate pro-per-ty con-fi-gu-ra-tions dif-fe-ren-tiable Non-li-neari-ty diffe-ren-tial con-fi-gu-ra-tion non-de-ge-ne-rate pro-blem follo-wing la-mi-na-tions}
\newcommand{\X}{\mathbb{X}}
\newcommand{\p}{\partial}
\maketitle
\begin{abstract}\noindent 
The anti-continuum limit of a monotone variational recurrence relation consists of a lattice of uncoupled particles in a periodic background. This limit supports many trivial equilibrium states that persist as solutions of the model with small coupling. We investigate when a persisting solution generates a so-called lamination and prove that near the anti-continuum limit the collection of laminations of solutions is homeomorphic to the $(N-1)$-dimensional simplex, with $N$ the number of distinct local minima of the background potential. This generalizes a result by Baesens and MacKay on twist maps near an anti-integrable limit.
\end{abstract}

\section{Introduction}
In this note we shall prove that certain models from statistical mechanics possess a continuous family of laminations of solutions when they are close to a nondegenerate anti-continuum limit. The models for which this is true are ``monotone variational recurrence relations'' of the form
\begin{align}\label{FKepsilon} V' (x_i) + \varepsilon \sum_{j\in \Z^d} \p_i S_j(x) = 0\ \mbox{for} \ i\in \Z^d \ \mbox{and} \ x_i\in\R\, .\end{align} 
A solution to such a model is a map $x:\Z^d\to \R$ and shall be called a {\it stationary configuration}. The space of all configurations will be denoted $\R^{\Z^d}$.

Equations (\ref{FKepsilon}) describe a lattice of particles in a periodic background that experience ferromagnetic attraction. Thus, the background potential $V:\R\to \R$ is assumed one-periodic. The properties required of the local interaction potentials $S_j(x)$ are the usual ones of Aubry-Mather theory. They will be made precise in Section \ref{setting} and guarantee that (\ref{FKepsilon}) is a well-defined monotone recurrence relation. 

Equations of the type (\ref{FKepsilon}) have obtained quite some attention in recent years, most notably in the work of De la Llave et al. \cite{candel-llave}, \cite{llave-valdinoci07}, \cite{llave-valdinoci}, \cite{llave-lattices}. In contrast with these studies, we will assume in this paper that the parameter $0 \leq \varepsilon \ll 1$ is small, so that (\ref{FKepsilon}) describes a weak interaction between the particles on the lattice.  


In the special case that $\varepsilon=0$ equation (\ref{FKepsilon}) reduces to 
 \begin{align}\label{antilimit}V'(x_i)=0 \ \mbox{for all}\ i\in\Z^d\, . \end{align} 
We shall refer to (\ref{antilimit}) as the {\it anti-continuum limit} of (\ref{FKepsilon}) because it models a medium in which the interaction between particles is absent. The solutions of the anti-continuum limit are simply those configurations $x$ for which every $x_i$ is a critical point of $V$. If one assumes that $V$ is a Morse function and that $x$ is a solution to (\ref{antilimit}) of bounded oscillation, then one can prove \cite{abramovici}, \cite{baesens}, \cite{MackayMeiss} that $x$ can be continued to a solution $x(\varepsilon)$ of (\ref{FKepsilon}) for $0< \varepsilon\ll 1$. This is of great help in understanding many of the solutions to (\ref{FKepsilon}) that exist close to the anti-continuum limit.

This paper is concerned with classifying the laminations of solutions of (\ref{FKepsilon}) close to the anti-continuum limit. Laminations are defined as follows:

\begin{definition}
We say that a collection $\Gamma\subset \R^{\Z^d}$ is a {\it lamination} if it is nonempty, ordered, translation-invariant and if in the topology of pointwise convergence it is closed, minimal and homeomorphic to a Cantor set.
\end{definition}
The precise meaning of the words ``ordered'', ``translation'' and ``minimal'' will be explained later. It is well-known \cite{candel-llave}, \cite{MramorRink1} that any lamination $\Gamma$ has an irrational {\it rotation vector}, i.e. there exists a unique $\omega\in \R^d\backslash \Q^d$ such that for any $x\in \Gamma$ and any $i\in \Z^d$ the limit
$$\lim_{n\to\infty} \frac{x_{ni}}{n} \ \mbox{exists and is equal to}\ \omega \cdot i\, .$$
Examples of laminations of solutions to (\ref{FKepsilon}) are the well-known Aubry-Mather sets of irrational rotation vectors \cite{AubryDaeron}, \cite{candel-llave},  \cite{MatherTopology}, \cite{MramorRink1}. It was proved by Bangert \cite{bangert87} that the Aubry-Mather set of rotation vector $\omega\in \R^d\backslash \Q^d$ is the unique lamination that has this rotation vector and that consists entirely of global minimizers. Nevertheless, the Aubry-Mather set may not be the only lamination of solutions of rotation vector $\omega$ though. This fact is illustrated by Theorem \ref{mainthm} below.  
It is the main result of this paper and it describes the set of all laminations of solutions to (\ref{FKepsilon}) of a fixed irrational rotation vector close to the anti-continuum limit. To formulate it, let us say that two points $\sigma_1, \sigma_2 \in \R$ are geometrically distinct if $\sigma_1-\sigma_2\notin \Z$. Clearly, if $V$ is a one-periodic Morse function, then it has finitely many geometrically distinct critical points.  

\begin{theorem}\label{mainthm}
Let $V$ be a one-periodic Morse function with $N$ geometrically distinct local minima and let $\omega\in \R^d\backslash \Q^d$. Then there is an $\varepsilon_1>0$ such that for all $0<\varepsilon\leq \varepsilon_1$ the collection of laminations for (\ref{FKepsilon}) of rotation vector $\omega$ is homeomorphic  to the $(N-1)$-dimensional simplex $$\Delta_{N-1} :=\left\{ p\in\R^N\ | \ p_1+ \ldots +p_N=1\, \mbox{and} \ p_j\geq 0 \ \mbox{for all} \ 1\leq j \leq N \right\}\, .$$
\end{theorem}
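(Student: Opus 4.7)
The plan is to build a bijection $p \mapsto \Gamma(\varepsilon, p)$ from $\Delta_{N-1}$ onto the set of laminations by first constructing, for each $p$, a corresponding lamination at the anti-continuum limit $\varepsilon = 0$ and then continuing it to $\varepsilon > 0$ via the implicit function theorem.

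\textbf{Step 1 (Construction at $\varepsilon = 0$).} Let $\sigma_1 < \ldots < \sigma_N$ enumerate the geometrically distinct local minima of $V$ in $[0,1)$. Given $p \in \Delta_{N-1}$, partition the circle $\R/\Z$ into cyclically ordered arcs $I_1, \ldots, I_N$ of lengths $p_1, \ldots, p_N$, and define a nondecreasing right-continuous step function $f_p: \R \to \R$ with $f_p(t+1) = f_p(t) + 1$ by setting $f_p \equiv \sigma_j + k$ on $I_j + k$. Put $x^{(c)}_i := f_p(c + \omega \cdot i)$ for $c \in \R$, and let $\Gamma_0(p)$ be the pointwise closure of $\{x^{(c)} : c \in \R\}$. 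Using the irrationality of $\omega$ (equidistribution of the phase orbit) one verifies that $\Gamma_0(p)$ is a lamination of the anti-continuum limit (\ref{antilimit}).

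\textbf{Step 2 (Persistence).} Since $V''(\sigma_j) > 0$, the diagonal linearisation of (\ref{FKepsilon}) at every $x^{(c)} \in \Gamma_0(p)$ is bounded below by a positive constant independent of $c$. A standard implicit function theorem argument in $\ell^\infty(\Z^d)$ yields, for all $0 < \varepsilon \leq \varepsilon_1$, a unique continuation $x^{(c)}(\varepsilon)$ of each $x^{(c)}$ to a solution of (\ref{FKepsilon}). Equivariance under translations and monotonicity of the recurrence (which preserves strict ordering of ordered solutions) imply that $\Gamma(\varepsilon, p) := \mathrm{cl}\{x^{(c)}(\varepsilon) : c \in \R\}$ is again a lamination, still with rotation vector $\omega$.

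\textbf{Step 3 (Surjectivity and homeomorphism).} Let $\Gamma$ be any lamination of (\ref{FKepsilon}) of rotation vector $\omega$ with $\varepsilon$ sufficiently small. Each coordinate of each $x \in \Gamma$ lies within $O(\varepsilon)$ of some critical point of $V$. The crucial step is to rule out coordinates near local maxima: a configuration lingering near a local maximum would, through the monotone variational structure, force a forbidden crossing with a translate of itself, contradicting the Aubry-like non-crossing property implicit in orderedness together with minimality. Once only minima occur, assigning to each $i \in \Z^d$ the index $j(i)$ with $x_i \equiv \sigma_{j(i)} \pmod 1$ up to $O(\varepsilon)$ produces a well-defined labelling whose asymptotic frequencies (independent of $x \in \Gamma$ by minimality) give a point $p \in \Delta_{N-1}$ depending only on $\Gamma$; uniqueness of the continuation in Step 2 then forces $\Gamma = \Gamma(\varepsilon, p)$. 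Continuity of $p \mapsto \Gamma(\varepsilon, p)$ in the Hausdorff topology follows from continuous dependence of the implicit function theorem on $f_p$, and compactness of $\Delta_{N-1}$ upgrades the continuous bijection to a homeomorphism.

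The main obstacle is the \emph{only minima} rigidity in Step 3: excluding laminations whose configurations have coordinates near local maxima of $V$ requires a careful interplay of the ordered Cantor structure with the monotonicity and minimality hypotheses. The remaining pieces (construction, implicit function theorem, continuity) are reasonably standard given the Aubry--Mather-theoretic setup outlined in the introduction.
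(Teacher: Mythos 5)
Your overall architecture matches the paper's: build a hull function $\phi_p$ for each $p\in\Delta_{N-1}$, sample it to get a lamination of (\ref{antilimit}), continue via an implicit-function contraction, rule out local maxima, and use surjectivity/continuity plus compactness of $\Delta_{N-1}$ to finish. However, your Step 2 contains the genuine gap of the whole problem. You assert that ``monotonicity of the recurrence (which preserves strict ordering of ordered solutions) implies that $\Gamma(\varepsilon,p)$ is again a lamination.'' But the comparison principle only \emph{strengthens} an ordering that already exists between two solutions ($x\leq y$ implies $x=y$ or $x\ll y$); it gives no information about whether the continuations $x^{(c)}(\varepsilon)$ and $\tau_{k,l}x^{(c)}(\varepsilon)$ of two ordered anti-continuum configurations remain ordered at all. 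In other words, you have not shown that the Birkhoff property persists under the quasi-Newton continuation. The paper explicitly flags this as the core difficulty: for second-order one-dimensional recurrences one can invoke Aubry's crossing lemma, but that lemma \emph{fails} for higher-order or higher-dimensional recurrence relations of the form (\ref{FKepsilon}), so the ``Aubry-like non-crossing property'' you appeal to in Step 3 is not available in this generality either. The paper's actual argument (Sections \ref{propertiessection}--\ref{minimizersection}) is a quantitative variational crossing estimate in the spirit of Bangert: it introduces a local excess-action functional $D^\varepsilon_B$, establishes its subadditivity over disjoint boxes, and shows (Lemma \ref{uniform_aubry}) that a crossing of two translates would force a \emph{uniform positive} excess action on every ball, which by the minimum-maximum inequality and a volume-versus-boundary comparison contradicts the fact that both continuations are constrained local minimizers. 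None of this is present or implicit in your Step 2, and without it the claim that $\Gamma(\varepsilon,p)$ is a lamination is unsupported. Your sketch for ruling out maxima in Step 3 is in the right direction (it corresponds to the paper's Theorem \ref{conversethm}, a direct interpolation estimate plus the comparison principle), and Steps 1 and the continuity/compactness finish are fine, but the unproved persistence of the Birkhoff/ordered structure is exactly where the proof lives.
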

An element $p\in\Delta_{N-1}$ has the interpretation of a probability distribution or density of states. In fact, we will show that if $0\leq \sigma_1<\ldots< \sigma_N<1$ are the geometrically distinct local minima of $V$, then for every $p\in \Delta_{N-1}$ there is exactly one lamination of solutions to (\ref{FKepsilon}) of which the elements have probability $p_j$ of lying near one of the local minima in $\sigma_j+\Z$. The set of laminations is given a topology that captures precisely this statistical information. This so-called {\it vague topology} is quite well-known in Aubry-Mather theory \cite{baesens}, \cite{moredenjoy} and will be defined in Section \ref{measuresection}. 

Theorem \ref{mainthm} is a generalization of a result of Baesens and MacKay \cite{baesens}, who proved that for every $\omega\in \R\backslash \Q$ a Hamiltonian twist map of $\R\times\R/\Z$ near a nondegenerate anti-integrable limit possesses an $(N-1)$-dimensional family of remnant circles of rotation number $\omega$. We shall describe this result, as well as a few other facts from the theory of twist maps, in Appendix \ref{appendix}. The result in \cite{baesens} arises as a special case of Theorem \ref{mainthm}, because the orbits of a Hamiltonian twist map are in one-to-one correspondence with the solutions of a second order recurrence relation of the form
\begin{align}\label{FKepsilonmap}
V'(x_i) + \varepsilon \left( \p S(x_{i-1},x_i)/\p x_i+\p S(x_i, x_{i+1})/\p x_i \right) = 0\ \mbox{for} \ i \in \Z \ \mbox{and}\ x_i\in \R \, .
\end{align}
Indeed, equation (\ref{FKepsilon}) reduces to (\ref{FKepsilonmap}) in case that $d=1$ and $S_j(x) = \frac{1}{2}S(x_{j-1}, x_j)+\frac{1}{2}S(x_{j}, x_{j+1})$. The function $V(x_j)+\varepsilon S(x_{j-1}, x_j):\R^2\to\R$ is the so-called generating function of the twist map. 


Although it may seem that the generalization of the result in \cite{baesens} to equations of the form (\ref{FKepsilon}) is straightforward, this turns out not to be the case. To understand this, one should realize that the laminations of solutions to (\ref{FKepsilon}) consist of local action-minimizers. A variant of Aubry's crossing lemma says that the local minimizers of a one-dimensional second order recurrence relation such as (\ref{FKepsilonmap}) can cross at most once. This fact is at the core of the proof given in \cite{baesens}. Nevertheless, Aubry's crossing lemma does not hold for recurrence relations of higher order or in higher dimensions and this is what makes the proof of Theorem \ref{mainthm} more delicate than the proof given in \cite{baesens}. In fact, our proof of Theorem \ref{mainthm} is based on ideas from Bangert \cite{bangert87}. We moreover remark that the proofs in \cite{baesens}, but also those in \cite{MackayMeiss}, seem to contain a few imperfections that we fix in this paper.

 


This paper is organized as follows. In Section \ref{setting} we specify our requirements on the interaction potentials $S_j(x)$. In Section \ref{continuationsection} we show that   solutions of the anti-continuum limit (\ref{antilimit}) of bounded oscillation persist to solutions of (\ref{FKepsilon}) for $0<\varepsilon \ll 1$. In Section \ref{birkhoffsection} we review some well-known facts about ordered configurations and in particular, we define and study laminations. In Sections \ref{propertiessection} and \ref{minimizersection} we then prove two important crossing properties of recurrent Birkhoff configurations near the anti-continuum limit. 
These properties form the main ingredients of the proof of Theorem \ref{mainthm} that we give in Section \ref{proofsection}. Finally, in Appendix \ref{proofappendix} we prove some rather well-known properties of Birkhoff configurations, while in Appendix \ref{appendix} we recall some facts from the theory of Hamiltonian twist maps and relate them to the results in this paper.

\section{Monotone variational recurrence relations}\label{setting}

Recall that we are interested in recurrence relations of the form (\ref{FKepsilon}), i.e. 
\begin{align} \label{FKepsilon2} V' (x_i) + \varepsilon R_i(x) = 0 \ \mbox{for} \ i\in \Z^d,\, x_i\in\R \ \mbox{and}\ R_i(x):=\sum_{j\in \Z^d} \p_i S_j(x)  \, .\end{align} 
Throughout this text we shall assume that the background potential $V:\R\to\R$ is a twice continuously differentiable and one-periodic Morse function. The functions $R_i:\R^{\Z^d}\to \R$ are meant to describe the ferromagnetic interactions between the particles in the lattice. It is clear that equation (\ref{FKepsilon2}) describes the stationary points of an ``action functional'' $W^{\varepsilon}: \R^{\Z^d}\to\R$ defined by
$$W^{\varepsilon}(x):=  \sum_{j\in \Z^d} S_j^{\varepsilon}(x)\ \mbox{with}\ S_j^{\varepsilon}(x):=V(x_j) + \varepsilon S_j(x)\, .$$
Indeed, the derivative of $W^{\varepsilon}(x)$ with respect to $x_i$ is precisely the left hand side of (\ref{FKepsilon2}). In order to guarantee that (\ref{FKepsilon2}) determines a well-defined monotone recurrence relation, we shall impose some conditions on the local interaction potentials $S_j(x)$ below.

To formulate the first condition, let $r>0$ be an integer, define 
$$||i||:=\sum_{k=1}^d |i_k| \ \mbox{for}\ i\in \Z^d \ \mbox{and let}\ B_r(j):=\{k\in \Z^d\ | \ ||k-j||\leq r\}$$ 
be the ball of radius $r$ centered at $j\in \Z^d$. Assume now that a smooth function $s_j:\R^{B_r(j)}\to\R$ is given. Then we can define a function $S_j:\R^{\Z^d}\to\R$ by setting $S_j(x):=s_j(x|_{B_r(j)})$. This just means that $S_j$ depends only on the finitely many variables $x_k$ with $||k-j||\leq r$. 

By construction, such an $S_j$ is continuous with respect to the topology of pointwise convergence:  if $x^n, x\in\R^{\Z^d}$ and $\lim_{n\to\infty} x^n=x$ pointwise, then obviously also $\lim_{n\to\infty} S_j(x^n)=S_j(x)$. Moreover, if the $s_j$ are continuously differentiable, then so are the $S_j$. Now we can formulate our first condition. 
\begin{itemize}
\item[{\bf A.}] The functions $S_j$ are of finite range and twice continuously differentiable. That is, there is an $0<r<\infty$ and for every $j\in\Z^d$ there is a twice continuously differentiable function $s_j: \R^{B_r(j)}\to \R$ such that $S_j(x)=s_j(x|_{B_r(j)})$. 
\end{itemize}
We think of $r$ as the finite range of interaction of our lattice. Most importantly, 
even though the formal sum $W^{\varepsilon}(x)=\sum_{j\in\Z^d} S^{\varepsilon}_j(x)$ will generally be divergent, condition {\bf A} ensures that  
$$ R_i(x) =   \sum_{j\in \Z^d} \p_i S_j(x) = \sum_{||j-i||\leq r} \p_i S_j(x) $$
is a finite sum. Thus, condition {\bf A} guarantees that (\ref{FKepsilon2}) is a well-defined recurrence relation. 

Another noteworthy consequence of condition {\bf A} is that the set of solutions to (\ref{FKepsilon2}) is closed under pointwise convergence: if $x^1, x^2, \ldots$ are solutions to (\ref{FKepsilon2}) and $\lim_{n\to\infty} x^{n}=x$ pointwise, then also $x$ solves (\ref{FKepsilon2})

To formulate condition {\bf B}, we introduce an action of $\mathbb{Z}^d\times\mathbb{Z}$ on $\mathbb{R}^{\mathbb{Z}^d}$ by ``translations'':
\begin{definition}\label{taudef}
Let $k\in \mathbb{Z}^d$ and $l\in \mathbb{Z}$. The {\it translation} operator $\tau_{k,l}:\mathbb{R}^{\mathbb{Z}^d}\to \mathbb{R}^{\mathbb{Z}^d}$ is defined by
$$(\tau_{k,l} x)_i := x_{i+k}+l\, . $$
\end{definition}
\noindent The graph of $\tau_{k,l}x$, viewed as a subset of $\mathbb{Z}^d\times \mathbb{R}$, is obtained by translating the graph of $x$ over the integer vector $(-k,l)$. This explains our terminology.
\begin{itemize}
\item[{\bf B.}] The functions $S_j$ are translation-invariant: $S_j(\tau_{k,l} x) = S_{j+k}(x)$ for all $j$,  $k$ and $l$.
\end{itemize}
Invariance of  $S_j$ under $\tau_{0,1}$ just means that $S_j(x)=S_j(x+1^{\Z^d})$ for all $j$, that is $S_j$ descends to a function on $\mathbb{R}^{\mathbb{Z}^d}/\mathbb{Z}$. The invariance of the $S_j$ under the shifts $\tau_{k,0}$ expresses the spatial homogeneity of the local interaction potentials. In fact, once one of the $S_j$ is given, for instance $S_0$, then all the others are determined by it. 

Condition {\bf B} guarantees that the set of solutions to (\ref{FKepsilon2}) is translation-invariant: if $x$ solves (\ref{FKepsilon2}), then so does $\tau_{k,l}x$.

Finally, condition {\bf C} is the most essential one:
\begin{itemize}
\item[{\bf C.}] The functions $S_j$ satisfy a {\it monotonicity condition}:
$$\partial_{i,k}S_j  \leq 0 \ \mbox{for all} \ j \ \mbox{and all} \ i\neq k, \mbox{while} \ \partial_{i,k}S_i  < 0 \ \mbox{for all} \ ||i-k||=1 \, .$$
\end{itemize}
Condition {\bf C} is also called a {\it twist condition} or {\it ferromagnetic condition}. It ensures that equation (\ref{FKepsilon2}) is monotone - in the sense that the partial derivatives of its left hand side with respect to any $x_k$ with $k\neq i$, is nonpositive. 

More importantly, condition {\bf C} makes that equation (\ref{FKepsilon2}) satisfies a comparison principle: when $\varepsilon>0$ and $x\neq y$ are solutions to (\ref{FKepsilon2}) such that $x_i \leq y_i$ for all $i\in \Z^d$, then one can show that actually $x_i<y_i$ for all $i\in \Z^d$. A proof of this fact is given in Lemma \ref{maximumprinciple} in Appendix \ref{proofappendix}. 


 In classical Aubry-Mather theory, one often also imposes that the $S_j$ are bounded from below and grow at infinity. We do not need to require such a coercivity condition in this paper.
 \begin{example}
As an example, let us choose for $S_j(x)$ the harmonic local energy 
$$S_j(x):=\frac{1}{4}\sum_{||k-j||=1}(x_k-x_j)^2\, . $$
It is clear that these local interaction potentials satisfy conditions {\bf A}-{\bf C}. In this case, equation (\ref{FKepsilon2}) reduces to the discrete nonlinear Laplace equation
$$V'(x_i)-\varepsilon (\Delta x)_i =0 \ \mbox{with}\ (\Delta x)_i := \sum_{||j-i||=1}\!\!\!\! (x_j-x_i)\, .$$
This equation is also known as the {\it lattice Frenkel-Kontorova model}. 
\end{example}

\section{Continuation from the anti-continuum limit}\label{continuationsection}
In this section, we show that certain solutions of the anti-continuum limit (\ref{antilimit}) persist to form solutions of (\ref{FKepsilon2}) for $0<\varepsilon \ll 1$. The first result is Theorem \ref{continuation} below, a variant of which can be found in \cite{MackayMeiss} in the context of Hamiltonian twist maps close to an anti-integrable limit.

Although they are quite elementary, we spell out the details of the proof of Theorem \ref{continuation}, because we will need these later. We denote by 
$$||x||_{\infty}:=\sup_{i\in\Z^d}|x_i| \ \mbox{and} \ B_{\delta}(x):=\{X\in\R^{\Z^d}\ | \ ||X-x||_{\infty}< \delta \}$$
respectively the supremum-norm of a configuration $x$ and the supremum-$\delta$-ball around $x$.

\begin{theorem}[Continuation] \label{continuation} 
Let $x:\Z^d\to\R$ be a solution to (\ref{antilimit}) with the property that $${\rm osc}(x) := \sup_{||j-i||\leq r}\!\! |x_j-x_i| \leq K <\infty\, .$$
Then there exist an $\varepsilon_0>0$ and a $\delta_0>0$, depending only on $K$, such that for every $0\leq \varepsilon\leq \varepsilon_0$ there is a unique $x(\varepsilon)\in B_{\delta_0}(x)$ that satisfies (\ref{FKepsilon2}). It moreover holds that $\lim_{\varepsilon\searrow 0} ||x(\varepsilon)-x||_{\infty}=0$.
\end{theorem}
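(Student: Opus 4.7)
The plan is to recast (\ref{FKepsilon2}) as a fixed-point equation in $(\R^{\Z^d},||\cdot||_\infty)$ and apply the Banach fixed-point theorem. Writing the equation coordinate-wise as $V'(X_i)=-\varepsilon R_i(X)$, I would invert $V'$ locally near each $x_i$ and iterate. The key preparatory estimates come from two uniform bounds. First, since $V$ is a one-periodic $C^2$ Morse function, its critical set is a union of finitely many $\Z$-cosets at each of which $V''$ is nonzero; continuity and periodicity therefore produce constants $\delta_1,c,M_1>0$ such that for \emph{every} critical point $\sigma$ of $V$, $V'$ restricts to a $C^1$-diffeomorphism from $(\sigma-\delta_1,\sigma+\delta_1)$ onto an open set containing $(-M_1,M_1)$, with $|V''|\geq c$ throughout, so its inverse $L_\sigma$ satisfies $|L_\sigma'|\leq 1/c$.

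Second, I would establish uniform bounds on the $R_i$. By condition {\bf A}, $R_i(X)=\sum_{||j-i||\leq r}\p_iS_j(X)$ depends only on $X|_{B_{2r}(i)}$. Using condition {\bf B}, I would apply $\tau_{i,-\lfloor X_i\rfloor}$ to reduce this to $R_0(Z)$ with $Z_0\in[0,1)$; for $X\in B_1(x)$ with ${\rm osc}(x)\leq K$, the window $Z|_{B_{2r}(0)}$ then lies in a compact subset of $\R^{B_{2r}(0)}$ depending only on $K$ and $r$. The $C^2$ regularity of $s_0$ yields constants $M=M(K)$ and $L=L(K)$ such that
$$|R_i(X)|\leq M\quad\mbox{and}\quad |R_i(X)-R_i(Y)|\leq L\,||X-Y||_\infty$$
for all $i\in\Z^d$ and all $X,Y\in B_1(x)$.

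With these ingredients in hand, I would fix $\delta_0\in(0,\min(\delta_1,1))$ and set $\varepsilon_0:=\min\{\delta_0 c/M,\,c/(2L),\,M_1/M\}$. For $0\leq\varepsilon\leq\varepsilon_0$ the map $\Phi^\varepsilon(X)_i:=L_{x_i}(-\varepsilon R_i(X))$ is well-defined on $B_{\delta_0}(x)$, sends it into itself (since $|\Phi^\varepsilon(X)_i-x_i|\leq\varepsilon M/c\leq\delta_0$) and is a $\half$-contraction (since $|\Phi^\varepsilon(X)_i-\Phi^\varepsilon(Y)_i|\leq(\varepsilon L/c)\,||X-Y||_\infty\leq\half||X-Y||_\infty$). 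Its fixed points are exactly the solutions of (\ref{FKepsilon2}) lying in $\prod_i(x_i-\delta_1,x_i+\delta_1)\supset B_{\delta_0}(x)$, so Banach yields the unique $x(\varepsilon)$, and the estimate $||x(\varepsilon)-x||_\infty\leq\varepsilon M/c$ delivers the continuity claim.

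The main obstacle is the uniformity across the lattice: the construction is really a parametrized inverse function theorem in $\ell^\infty(\Z^d)$, and it succeeds only because (i) one-periodicity combined with the Morse assumption give $i$-independent control of $(V')^{-1}$ near \emph{every} possible $x_i$, and (ii) translation invariance of the $S_j$ together with the oscillation bound reduces the a priori $i$-dependent quantities $R_i$ to evaluations on a single compact set determined by $K$. Without either ingredient, the constants $M,L,\delta_1$ could degenerate as $||i||\to\infty$ and no global contraction would exist.
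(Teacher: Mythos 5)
Your proof is correct and follows the same broad strategy as the paper: recast (\ref{FKepsilon2}) as a fixed-point problem in $\ell^\infty(\Z^d)$ and apply the Banach contraction theorem, with all constants made uniform across the lattice via periodicity of $V$ and translation invariance of the $S_j$ (compactness of the relevant set of local windows). The one genuine difference is the choice of contraction operator. The paper uses a quasi-Newton map
$$K_{\varepsilon,x}(X)_i \;:=\; X_i - \frac{V'(X_i) + \varepsilon R_i(X)}{V''(x_i)}\,,$$
which linearizes $V'$ at $x_i$ and therefore must absorb the quadratic remainder $V'(X_i)-V'(Y_i)-V''(x_i)(X_i-Y_i)$ into the contraction budget; this is why the paper introduces the auxiliary constant $k$ and shrinks $\delta_0$ until $|V''(X_i)-V''(x_i)|\leq kc/2$ on $B_{\delta_0}(x)$, landing on a contraction constant $\tfrac{k}{2}+\tfrac{\varepsilon C_2}{c}$. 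You instead invert $V'$ exactly near each critical point and set $\Phi^\varepsilon(X)_i := L_{x_i}(-\varepsilon R_i(X))$, which gives the cleaner contraction constant $\varepsilon L/c$ that vanishes as $\varepsilon\to 0$ with no residual $k/2$ term, at the modest cost of invoking the inverse function theorem for $V'$ uniformly over all critical points. Both routes yield $\varepsilon_0,\delta_0$ depending only on $K$, and both operators inherit the finite interaction range from condition {\bf A} (yours because $R_i$ depends on $X|_{B_{2r}(i)}$ only), so your $\Phi^\varepsilon$ would also support the finite-propagation argument the paper later uses in the proof of Theorem \ref{continuous}. One small point to be explicit about if you write this up: Banach's theorem is applied on the closed ball $\overline{B_{\delta_0}(x)}$, and your estimate $|\Phi^\varepsilon(X)_i - x_i|\leq \varepsilon M/c\leq\delta_0$ shows invariance of that closed ball; the fixed point then lies in its interior once $\varepsilon<\varepsilon_0$, which is all that is needed.
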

\begin{proof} A sequence $X\in \R^{\Z^d}$ is a solution to (\ref{FKepsilon2}) if and only if 
\begin{align}\label{defF}
F(X, \varepsilon)=0, \ \mbox{where}\ F:\R^{\Z^d} \times \R \to \R^{\Z^d} \ \mbox{is defined by} \ F(X, \varepsilon)_i:=V'(X_i)+\varepsilon R_i(X) \, .
\end{align}
In particular, $F(x,0)=0$ for any $x$ that satisfies (\ref{antilimit}). One now wants to apply the implicit function theorem to conclude the existence of a family  $x(\varepsilon)$ near $x$ with $F(x(\varepsilon), \varepsilon)=0$. We will explicitly construct and investigate the contraction operator $K_{\varepsilon, x}$ that is used to find $x(\varepsilon)$. 

As a first important remark, let us note that the set $$\{X:\Z^d\to \R\, |\, {\rm osc}(X) \leq K+1 \}/\Z \subset \R^{\Z^d}/ \Z$$ is compact in the topology of pointwise convergence. As a consequence, the continuous functions $\p_{i}S_j$ are bounded on this set. In view of assumption {\bf B} it holds that $\p_{i-k}S_j(\tau_{k,l}x) = \p_{i}S_{j+k}(x)$ and therefore we have that there exists a uniform constant $C_1>0$ for which
$$|\p_{i}S_j(X)| \leq C_1/(2r+1)^d \ \mbox{for all}\ i,j\ \mbox{and all}\ X \ \mbox{with}\ {\rm osc}(X)\leq K+1\, .$$
This estimate in turn implies for every $X$ with ${\rm osc}(X)\leq K+1$ that
$$|R_i(X)| \leq \sum_{||j-i||\leq r} |\p_i S_j(X)| \leq C_1 \, .$$ 
This proves that $||R(X)||_{\infty}<\infty$ if ${\rm osc}(X)$ is finite. More precisely, because ${\rm osc}(x)\leq K$ and ${\rm osc}(X)\leq {\rm osc }(x)+{\rm osc}(X-x) \leq  {\rm osc }(x)+ 2||X-x||_{\infty}$, we find in particular that $||F(X, \varepsilon)||_{\infty}<\infty$ whenever $||X-x||_{\infty}< \frac{1}{2}$. In other words, each one of the operators $F(\cdot, \varepsilon)$ maps the collection $$B_{1/2}(x)= \{X\in\R^{\Z^d}\ | \ ||X-x||_{\infty} < 1/2 \} \ \mbox{inside}\ l_{\infty}=\{F\in\R^{\Z^d}\ | \ ||F||_{\infty}  < \infty \}\, .$$

\noindent The next remark is that the Fr\'echet derivative $D_XF(x,0): l_{\infty} \to l_{\infty}$ at $x$ is given by
\[
\left( D_XF(x,0)\cdot v \right)_i:=\left.\frac{d}{dt}\right|_{t=0}\!\! F(x+tv,0)_i = V''(x_i) \cdot v_i\, .
\] 
Because $V$ is a Morse function, $|V''(x_i)| > c$ for some constant $c>0$ and for all $i\in \Z^d$, and it thus follows that $D_XF(x,0): l_{\infty}\to l_{\infty}$ has a bounded inverse.

This is what motivates us to define the quasi-Newton operator $K_{\varepsilon, x}$ by 
$$K_{\varepsilon, x}(X):=X-D_XF(x,0)^{-1}\cdot F(X,\varepsilon), \ \mbox{that is}\ K_{\varepsilon, x}(X)_i:= X_i - \frac{V'(X_i) + \varepsilon R_i(X)}{V''(x_i)}\ .$$
As observed above, $K_{\varepsilon, x}$ maps $B_{1/2}(x)$ into $\{X\, |\, ||X-x||_{\infty}<\infty\} $ and it is clear that $K_{\varepsilon, x}(X)=X$ if and only if $F(X,\varepsilon)=0$. Restricted to an appropriately chosen small ball $B_{\delta_0}(x)$ around $x$, the operator $K_{\varepsilon, x}$ moreover acts as a very strong contraction that sends $B_{\delta_0}(x)$ into itself. 

This can for example be seen from the following standard argument. Let us choose any desired contraction constant $0<k<1$ and, accordingly, a $0<\delta_0<\frac{1}{2}$ with the property that $|V''(X_i)-V''(x_i)|\leq \frac{kc}{2}$ uniformly for $X\in B_{\delta_0}(x)$. Such $\delta_0$ exists because $V''$ is assumed continuous and because $V$ only has finitely many geometrically distinct stationary points. For example, when $V''$ is Lipschitz continuous with Lipschitz constant $L$, then it suffices to choose $\delta_0=\frac{kc}{2L}$. For later reference, let us remark that it holds automatically that the intervals
\begin{align}
\label{disjointintervals}
 (x_j-\delta_0, x_j+\delta_0) \ \mbox{and}\ (x_k-\delta_0, x_k+\delta_0) \ \mbox{are disjoint if} \ x_j\neq x_k\ \mbox{are critical points of}\ V\, .
\end{align}

 We now have, for $X,Y\in B_{\delta_0}(x)$, that
\begin{align}\label{contraction}
|K_{\varepsilon, x}(X)_i-K_{\varepsilon, x}(Y)_i| \leq \left|X_i-Y_i -\frac{V'(X_i)-V'(Y_i)}{V''(x_i)}\right| + \left|\frac{\varepsilon R_i(X) -\varepsilon R_i(Y)}{V''(x_i)}\right|\, .
\end{align}
To estimate the first term in (\ref{contraction}) we write
$$X_i-Y_i - \frac{V'(X_i)-V'(Y_i)}{V''(x_i)}= \frac{X_i-Y_i}{V''(x_i)}\left(\int_0^1\!V''(x_i)-V''(tX_i+(1-t)Y_i)\ \!dt\right)  \, .$$  
This shows that the first term in (\ref{contraction}) is bounded from above by $\frac{k}{2}||X-Y||_{\infty}$.

To  estimate the second term in (\ref{contraction}) we argue as above to conclude that there exists a constant $C_2>0$ for which
$$|\p_{i,k}S_j(X)| \leq C_2/(2r+1)^{2d} \ \mbox{for all}\ i,j,k\ \mbox{and all}\ X \ \mbox{with}\ {\rm osc}(X)\leq K+1\, .$$
In turn, this implies for all $X, Y$ with ${\rm osc}(X), {\rm osc}(Y)\leq K+1$ that
\begin{align}\nonumber 
& |R_i(X) - R_i(Y)| = \left| \int_0^1  \frac{d}{dt} R_i(t X +(1-t)Y)  dt \right|\\ \nonumber & =   \left| \int_0^1  \sum_{||j-i||\leq r} \frac{d}{dt} \p_iS_j(t X +(1-t)Y)   dt \right|   \\ \nonumber & \leq  \sum_{||k-j||\leq r}\sum_{||j-i||\leq r} \int_0^1 |\p_{i, k}S_j(t X + (1-t)Y)|\, dt \cdot |X_k-Y_k| \\ \nonumber &  \leq C_2 ||X-Y||_{\infty}\, .
\end{align}
Thus, we have proved that $R$ is Lipschitz continuous: $$||R(X)-R(Y)||_{\infty} \leq C_2 ||X-Y||_{\infty}\ \mbox{for} \ X,Y\in B_{1/2}(x)\, .$$
Summarizing, we found that  
 $$||K_{\varepsilon, x}(X)-K_{\varepsilon, x}(Y)||_{\infty}\leq \left(\frac{k}{2}+\frac{\varepsilon \, C_2}{c}\right) ||X-Y||_{\infty}\ \mbox{for} \ X, Y \in B_{\delta_0}(x)\, .$$
To investigate whether $K_{\varepsilon, x}$ sends $B_{\delta_0}(x)$ to itself, we observe that when $||X-x||_{\infty}< \delta_0$, then
$$|K_{\varepsilon, x}(X)_i-x_i|\leq |K_{\varepsilon, x}(X)_i-K_{\varepsilon, x}(x)_i|+ |K_{\varepsilon, x}(x)_i - x_i |< \left(\frac{k}{2}+\frac{\varepsilon \, C_2}{c}\right) \delta_0+ \frac{\varepsilon \, C_1}{c}\ ,$$
where the final estimate holds because $V'(x_i)=0$ and $|R_i(x)| \leq C_1$.

From the estimates above it follows that it suffices to choose $0\leq \varepsilon\leq \varepsilon_0 := \min\left\{\frac{kc}{2C_2}, \frac{(1-k)\delta_0c}{C_1}\right\}$ to make sure that both $||K_{\varepsilon, x}(X)-K_{\varepsilon, x}(Y)||_{\infty}\leq k||X-Y||_{\infty}$ and $K_{\varepsilon, x}(B_{\delta_0}(x)) \subset B_{\delta_0}(x) $. Clearly, $\lim_{\delta_0\searrow 0}\varepsilon_0=0$.
\end{proof}

\noindent When $\Gamma$ is any collection of solutions to (\ref{antilimit}) with the property that ${\rm osc}(x)\leq K<\infty$ for all $x\in \Gamma$, then we denote its continuation by
$$\Gamma_{\varepsilon}:=\{x(\varepsilon)\, |\, x\in\Gamma\}\ \mbox{for}\ 0\leq \varepsilon\leq \varepsilon_0\, .$$
Theorem \ref{continuous} below says that the topology of $\Gamma_{\varepsilon}$ is the same as that of $\Gamma$. The proof of Theorem \ref{continuous} is a byproduct of the proof of Theorem \ref{continuation}. The statement of Theorem \ref{continuous} is also contained in \cite{MackayMeiss}, but since we do not understand the proof given in the latter paper, we provide one ourselves. It does not appear as trivial to us as is claimed in \cite{MackayMeiss}.
\begin{theorem}\label{continuous} Let $\Gamma$ be a collection of solutions to (\ref{antilimit}) with ${\rm osc}(x)\leq K<\infty$ for all $x\in\Gamma$. Let $\delta_0, \varepsilon_0>0$ be as in Theorem \ref{continuation}. For $0\leq \varepsilon\leq \varepsilon_0$ and $x\in \Gamma$, we denote by $x(\varepsilon)\in B_{\delta_0}(x)$ its unique continuation that solves (\ref{FKepsilon2}) and by 
$\Gamma_{\varepsilon}:=\{x(\varepsilon)\, |\, x\in\Gamma\}$.
Then the map $$\Phi_{\varepsilon}: \Gamma\to\Gamma_{\varepsilon}\, ,\ x\mapsto x(\varepsilon)$$
is a homeomorphism in the topology of pointwise convergence.
\end{theorem}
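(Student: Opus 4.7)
My plan is to show bijectivity, then continuity of $\Phi_\varepsilon^{-1}$, then continuity of $\Phi_\varepsilon$. Bijectivity is immediate from the uniqueness clause of Theorem \ref{continuation} combined with (\ref{disjointintervals}): if $x\neq y$ in $\Gamma$, then some pair $x_i\neq y_i$ of distinct critical points of $V$ has $|x_i-y_i|\geq 2\delta_0$, so $B_{\delta_0}(x)\cap B_{\delta_0}(y)=\emptyset$ and hence $x(\varepsilon)\neq y(\varepsilon)$. The main quantitative tool for both continuity statements, extracted from the Banach fixed point argument in the proof of Theorem \ref{continuation}, is the estimate
\[
\|x(\varepsilon)-x\|_\infty \;\leq\; \frac{\|K_{\varepsilon,x}(x)-x\|_\infty}{1-k} \;\leq\; \frac{\varepsilon\, C_1}{(1-k)c} \;=:\; \rho(\varepsilon)\, ,
\]
where the first inequality is the standard Banach post-estimate and the second uses $K_{\varepsilon,x}(x)_i=x_i-\varepsilon R_i(x)/V''(x_i)$ together with $V'(x_i)=0$ and $|R_i(x)|\leq C_1$. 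After shrinking $\varepsilon_0$ slightly if necessary, I may assume $\rho(\varepsilon)<\delta_0$ and hence $2\rho(\varepsilon)<\eta$ for all $0\leq \varepsilon\leq \varepsilon_0$, where $\eta>0$ denotes the minimum gap between distinct critical points of the one-periodic Morse function $V$ (by (\ref{disjointintervals}), $\eta\geq 2\delta_0$).

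For continuity of $\Phi_\varepsilon^{-1}$, suppose $x^n(\varepsilon)\to y(\varepsilon)$ pointwise and fix $i\in\Z^d$. The triangle inequality and the $\rho$-bound yield
\[
|x^n_i - y_i| \;\leq\; |x^n_i-x^n(\varepsilon)_i| + |x^n(\varepsilon)_i-y(\varepsilon)_i| + |y(\varepsilon)_i-y_i| \;\leq\; 2\rho(\varepsilon) + |x^n(\varepsilon)_i-y(\varepsilon)_i|\, ,
\]
which is eventually strictly less than $\eta$. Since $x^n_i$ and $y_i$ are both critical points of $V$, the gap $\eta$ forces $x^n_i=y_i$ for $n$ large, giving pointwise convergence $x^n\to y$.

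For continuity of $\Phi_\varepsilon$, suppose $x^n\to y$ pointwise. By discreteness of the critical set of $V$, for each fixed $i$ one has $x^n_i=y_i$ for all $n$ sufficiently large. I argue by sequential compactness: any subsequence of $x^n(\varepsilon)$ admits, via a diagonal extraction (each coordinate $j$ eventually lies in the bounded interval $[y_j-\rho(\varepsilon),y_j+\rho(\varepsilon)]$), a pointwise-convergent sub-subsequence $x^{n_k}(\varepsilon)\to z\in\R^{\Z^d}$. The closedness of the solution set of (\ref{FKepsilon2}) under pointwise convergence, noted in Section \ref{setting}, shows that $z$ itself solves (\ref{FKepsilon2}). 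Passing to the limit in $|x^{n_k}(\varepsilon)_j-x^{n_k}_j|\leq \rho(\varepsilon)$, using that $x^{n_k}_j=y_j$ eventually, gives $\|z-y\|_\infty\leq \rho(\varepsilon)<\delta_0$, so $z\in B_{\delta_0}(y)$, and the uniqueness clause of Theorem \ref{continuation} forces $z=y(\varepsilon)$. Every subsequence of $x^n(\varepsilon)$ thus admits a sub-subsequence with the same limit $y(\varepsilon)$, whence $x^n(\varepsilon)\to y(\varepsilon)$ pointwise. I expect the main obstacle to be the careful bookkeeping that ensures the strict inequalities $\rho(\varepsilon)<\delta_0$ and $2\rho(\varepsilon)<\eta$ used above, which may require reducing the $\varepsilon_0$ of Theorem \ref{continuation} slightly; a secondary subtlety is that the pointwise topology on $\R^{\Z^d}$ is not metrizable, but this is bypassed cleanly by the diagonal extraction.
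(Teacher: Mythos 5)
Your bijectivity argument and the continuity of $\Phi_\varepsilon^{-1}$ are essentially the same as the paper's. The continuity of $\Phi_\varepsilon$ is where you genuinely diverge, and your approach is correct. The paper exploits the explicit structure of the quasi-Newton contraction $K_{\varepsilon,x}$ together with the finite interaction range $r$ to prove a ``finite speed of propagation'' estimate: if $x$ and $y$ agree on $B_{M_2}(0)$ with $M_2\geq M_1+mr$, then $\|x(\varepsilon)-y(\varepsilon)|_{B_{M_1}(0)}\|_\infty\leq 2\delta_0 2^{-m}$, from which continuity follows directly. You instead use a soft compactness argument: pointwise precompactness of $\{x^n(\varepsilon)\}$, closedness of the solution set of (\ref{FKepsilon2}) under pointwise limits, and the uniqueness clause of Theorem \ref{continuation} to identify the limit as $y(\varepsilon)$. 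Your route avoids re-opening the fixed-point iteration entirely and is arguably shorter, at the cost of being non-quantitative (the paper's argument yields an explicit modulus of continuity governed by $m\mapsto 2^{-m}$ and the range $r$, which you do not recover). Both are valid.

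Two small remarks. First, your claim that the pointwise topology on $\R^{\Z^d}$ is ``not metrizable'' is incorrect: since $\Z^d$ is countable, $\R^{\Z^d}$ with the product topology is metrizable (e.g.\ $d(x,y)=\sum_i 2^{-\|i\|}\min(1,|x_i-y_i|)$), and this is in fact what guarantees that the sub-subsequence principle you invoke characterizes convergence; the diagonal extraction does not ``bypass'' an obstruction, it simply produces the convergent sub-subsequence. Second, your shrinking of $\varepsilon_0$ to force $\rho(\varepsilon)<\delta_0$ is a genuine point: passing to the limit in $|x^{n_k}(\varepsilon)_j-y_j|<\delta_0$ only gives $|z_j-y_j|\leq\delta_0$, which is not enough to place $z$ in the open ball $B_{\delta_0}(y)$ where uniqueness holds, so the strict bound $\rho(\varepsilon)<\delta_0$ is needed. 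The paper sidesteps this issue because its argument controls $\|x(\varepsilon)-y(\varepsilon)|_{B_{M_1}(0)}\|$ directly without ever needing to place a limit configuration inside a $\delta_0$-ball.
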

\begin{proof}
$\Phi_{\varepsilon}$ is surjective by definition. Injectivity follows because when $x^1(\varepsilon)=x^2(\varepsilon)$, then $B_{\delta_0}(x^1)\cap B_{\delta_0}(x^2)$ is nonempty. This implies that $x^1=x^2$ because both solve (\ref{antilimit}).

To prove that $\Phi_{\varepsilon}$ has a continuous inverse, assume that $x^1, x^2, \dots, x \in \Gamma$ and that $x^n(\varepsilon)\to x(\varepsilon)$ pointwise. This implies that 
$$\lim_{n\to\infty}|x_i^n -x(\varepsilon)_i|\leq \lim_{n\to\infty}|x^n_i -x^n(\varepsilon)_i| +  |x^n(\varepsilon)_i-x(\varepsilon)_i| \leq \delta_0\ .$$ 
But this means that $x(\varepsilon)\in B_{\delta_0}(\lim_{n\to\infty}x^n)$, i.e. $\lim_{n\to\infty}x^n=x$ by uniqueness.

Finally, to prove that $\Phi_{\varepsilon}$ is continuous with respect to pointwise convergence, let $M_1 < M_2$ be integers and choose $x, y\in\Gamma$ with the property that $x|_{B_{M_2}(0)}=y|_{B_{M_2}(0)}$, where $B_M(0)=\{i\in\Z^d\ | \ ||i||\leq M\}$. We will show that it can be arranged that $||x(\varepsilon) - y(\varepsilon)|_{B_{M_1}(0)}||_{\infty}$ is as small as we want if we choose $M_2$ sufficiently much larger than $M_1$. 

To prove this, recall that $x(\varepsilon)$ and $y(\varepsilon)$ are obtained as fixed points of the quasi-Newton contraction operators $K_{\varepsilon, x}, K_{\varepsilon, y}$ defined by
$$K_{\varepsilon, x}(X)_i:= X_i - \frac{V'(X_i) + \varepsilon R_i(X)}{V''(x_i)}\ \mbox{and} \ K_{\varepsilon, y}(Y)_i:= Y_i - \frac{V'(Y_i) + \varepsilon R_i(Y)}{V''(y_i)}\ ,$$
on the balls $\{X\in\R^{\Z^d}\ | \ ||X-x||_{\infty}< \delta_0\}$ and $\{Y\in\R^{\Z^d}\ | \ ||Y-y||_{\infty}< \delta_0\}$ respectively.

Both $K_{\varepsilon, x}$ and $K_{\varepsilon, y}$ have very small contraction constants, say bounded by $\frac{1}{2}$. Therefore, 
$$||x(\varepsilon)-K_{\varepsilon, x}^m(x)||_{\infty}\leq \delta_0 2^{-m}\ \mbox{and} \ ||y(\varepsilon)-K_{\varepsilon, y}^m(y)||_{\infty}\leq \delta_0 2^{-m} \, .$$
At the same time, because $x_i=y_i$ for $i\in B_{M_2}(0)$ and since $X\mapsto R_i(X)$ and $Y\mapsto R_i(Y)$ are of finite range $r$, it follows that $K^{m}_{\varepsilon, x}(x)_i = x_i = y_i = K_{\varepsilon, y}^{m}(y)_i$ for all $i\in B_{M_2-mr}(0)$. In particular, $K^{m}_{\varepsilon, x}(x)_i= K_{\varepsilon, y}^{m}(y)_i$ for all $i\in B_{M_1}(0)$ so long as $M_2-m r\geq M_1$, that is if $M_2\geq M_1+ m r$. For such $m$ we conclude that 
\begin{align}\nonumber
 ||x(\varepsilon)-y(\varepsilon)|_{B_{M_1}(0)}||_{\infty} & \leq  \\ ||x(\varepsilon)-K_{\varepsilon, x}^{m}(x)|| + ||K^{m}_{\varepsilon, x}(x) - K_{\varepsilon, y}^{m}(y)|_{B_{M_1(0)}}||  & + ||y(\varepsilon)-K_{\varepsilon, y}^{m}(y) || \leq 2 \delta_0 2^{-m}\ .\nonumber
\end{align}
This implies that $\Phi_{\varepsilon}$ is continuous as follows. Assume that $x^1, x^2, \ldots, x\in \Gamma$ and $x^n\to x$ pointwise. Moreover, let $\varepsilon>0$ be a positive number and $M_1$ an integer. Choose an $m$ so that $2\delta_0 2^{-m}<\varepsilon$ and a corresponding $M_2$ so that $M_2\geq M_1+m r$. Because the critical points of $V$ are discrete, there exists an integer $N$ so that for all $n>N$ it holds that $x^n_i=x_i$ for all $i\in B_{M_2}(0)$. For such $n$ it then holds that $||x^n(\varepsilon)-x(\varepsilon)|_{B_{M_1}(0)}|| <\varepsilon$. Thus, $\lim_{n\to\infty} \Phi_{\varepsilon}(x^n)=\lim_{n\to\infty} x^{n}(\varepsilon) = x(\varepsilon)=\Phi_{\varepsilon}(x)$ pointwise if $\lim_{n\to\infty}x^n=x$ pointwise.
\end{proof}


\section{Birkhoff configurations}\label{birkhoffsection}
We shall be interested in solutions to (\ref{FKepsilon2}) with the so-called {\it Birkhoff property}, because these are the solutions that generate the laminations alluded to in the introduction. The goal of this section is to characterize laminations as explicitly as possible. Most results in this section are well-known, although not usually proved in full detail in the literature. Also, we believe that Theorems \ref{prevaguetheorem} and \ref{vaguetheorem} have not been formulated explicitly before.

In order to define Birkhoff configurations, we introduce a partial ordering on the space of configurations as follows:
\begin{definition}
For $x, y:\Z^d\to\R$ with $x\neq y$ we write $x<y$ if $x_i\leq y_i$ for all $i\in \Z^d$. Similarly for $x>y$. We say that $x$ and $y$ are {\it ordered} if $x<y$, $x=y$ or $x>y$. 

We write $x\ll y$ if $x_i< y_i$ for all $i\in \Z^d$. Similarly for $x\gg y$. We say that $x$ and $y$ are {\it strictly ordered} if $x\ll y$, $x=y$ or $x\gg y$. 
\end{definition}
Recall the translation maps $\tau_{k,l}:\R^{\Z^d}\to \R^{\Z^d}$ introduced in Definition \ref{taudef}.
Now we can define Birkhoff configurations:
\begin{definition}
We say that a configuration $x:\Z^d\to\R$ is {\it Birkhoff} if the collection 
$$\{\tau_{k,l}x\ |\  k\in \Z^d, l\in\Z\}\ \mbox{is ordered}.$$ 
\end{definition}
Every Birkhoff configuration has a rotation vector, as was already known to Poincar\'e:
 \begin{lemma}
When $x$ is a Birkhoff configuration, then there is a unique $\omega\in\R^d$ such that 
$$|x_i-(x_0+\omega\cdot i)|\leq 1 \, .$$
In particular, every Birkhoff configuration is of bounded oscillation (namely at most $r||\omega||+2$) and has a rotation vector: $$\lim_{n\to\pm\infty}\frac{x_{ni}}{n}=\omega\cdot i\, .$$
\end{lemma}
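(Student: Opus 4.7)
The plan is to define $f\colon \Z^d \to \R$ by $f(k) := x_k - x_0$, exploit the Birkhoff property to establish the quasi-additivity bound $|f(k_1+k_2)-f(k_1)-f(k_2)|\leq 1$, and then appeal to the classical fact that an almost-homomorphism $\Z^d\to\R$ lies uniformly close to a unique genuine homomorphism.

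Since translations preserve the coordinatewise ordering on $\R^{\Z^d}$, the Birkhoff hypothesis is equivalent to requiring that for every $(k,l)\in\Z^d\times\Z$ the configurations $\tau_{k,l}x$ and $x$ are ordered. Fix $k\in\Z^d$ and set $A_k := \{l\in\Z : x_{i+k}+l\geq x_i \text{ for all } i\in\Z^d\}$. This set is upward closed; it is nonempty (it contains every sufficiently large $l$) and bounded below (for $l$ sufficiently negative, $\tau_{k,l}x\leq x$ fails at any fixed $i$), so $\alpha_k := \min A_k\in\Z$ exists. Since $\alpha_k-1\notin A_k$, the Birkhoff dichotomy forces $x_{i+k}+\alpha_k-1\leq x_i$ for all $i$, so
\begin{equation*}
x_{i+k}-x_i \in [-\alpha_k,\; 1-\alpha_k] \quad \text{uniformly in } i\in\Z^d.
\end{equation*}
Applying this with $i=0$ and with $i=k_1$ (and $k$ replaced by $k_2$) places both $f(k_2)$ and $f(k_1+k_2)-f(k_1)$ in the same interval of length $1$, yielding $|f(k_1+k_2) - f(k_1) - f(k_2)| \leq 1$ for all $k_1,k_2\in\Z^d$.

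From quasi-additivity a straightforward induction gives $|f(nk) - nf(k)| \leq n-1$ for every $n\geq 1$. Applying this with $k$ replaced by $nk$ yields $|f(nmk)/(nm) - f(nk)/n|\leq 1/n$, so $(f(nk)/n)_{n\geq 1}$ is Cauchy and converges to some $L(k)\in\R$. Passing quasi-additivity to the limit shows $L\colon\Z^d\to\R$ is additive, hence $L(k) = \omega\cdot k$ where $\omega_j := L(e_j)$. Letting $n\to\infty$ in $|f(k) - f(nk)/n|\leq (n-1)/n$ produces $|f(k) - \omega\cdot k|\leq 1$, which is $|x_k - (x_0+\omega\cdot k)|\leq 1$ for every $k\in\Z^d$. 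Uniqueness: if $\omega'$ also satisfies the bound, then $|(\omega-\omega')\cdot k|\leq 2$ for all $k\in\Z^d$, and scaling $k$ forces $\omega=\omega'$. The oscillation bound is the triangle inequality $|x_j-x_i|\leq |\omega\cdot(j-i)|+2 \leq r\|\omega\|+2$ when $\|j-i\|\leq r$, and dividing $|x_{ni}-x_0-n\,\omega\cdot i|\leq 1$ by $|n|$ gives $x_{ni}/n\to\omega\cdot i$ as $n\to\pm\infty$.

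The main obstacle is conceptual rather than technical: the Birkhoff condition must be used globally — not merely as individual pointwise comparisons — to obtain the $i$-uniform two-sided bound on $x_{i+k}-x_i$. This uniformity is precisely what reduces the quasi-additivity constant to $1$ and decouples the slope $\alpha_k$ from the base point $i$. Once that step is in hand, everything else is the standard almost-homomorphism argument and requires only the bookkeeping sketched above.
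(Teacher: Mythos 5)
The paper itself offers no proof of this lemma (it cites \cite{MramorRink1}), so there is no in-text argument to compare against. Your proof is correct and self-contained: the Birkhoff dichotomy forces the uniform-in-$i$ bound $x_{i+k}-x_i\in[-\alpha_k,\,1-\alpha_k]$, and the remainder is the standard quasi-additivity (almost-homomorphism) argument on $\Z^d$. Two small bookkeeping slips in the parenthetical justifications: nonemptiness of $A_k$ is not simply ``it contains every sufficiently large $l$'' a priori --- one must use the dichotomy, namely that for large $l$ the inequality $\tau_{k,l}x\leq x$ fails at $i=0$ and hence $\tau_{k,l}x\geq x$ must hold; and for boundedness below you want to say $\tau_{k,l}x\geq x$ (not $\leq x$) fails at $i=0$ when $l$ is very negative. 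These are trivial to fix and the main step --- the uniform two-sided bound coming from the global Birkhoff ordering --- is exactly what is needed and is correctly carried out.
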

For a proof of this lemma, see \cite{MramorRink1}. The rotation vector of a Birkhoff configuration $x$ decides to a large extent wether $\tau_{k,l}x> x$ or $\tau_{k,l}x < x$. We will make such extensive use of the result below that we decided to include it in our exposition. Its simple proof is given in Appendix \ref{proofappendix}.
\begin{proposition}\label{numbertheory}
Let $\omega\in \R^d$ and let $x$ be a Birkhoff configuration with rotation vector $\omega$. If $\omega\cdot k+l>0$, then $\tau_{k.l}x > x$ and if $\omega\cdot k+l<0$, then $\tau_{k.l}x < x$. 
\end{proposition}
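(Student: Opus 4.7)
The plan is to use the trichotomy from the Birkhoff property together with the existence of the rotation vector, and conclude by contrapositive. By the definition of a Birkhoff configuration, for any $k \in \Z^d$ and $l \in \Z$ exactly one of the alternatives $\tau_{k,l}x < x$, $\tau_{k,l}x = x$, or $\tau_{k,l}x > x$ must hold. I will show that each alternative forces a specific sign on $\omega\cdot k + l$, and read off the proposition.

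First I would analyze the equality alternative. If $\tau_{k,l}x = x$, then $x_{i+k}+l=x_i$ for every $i\in\Z^d$. Iterating this identity along the arithmetic progression $0, k, 2k, \dots, nk$ yields $x_{nk} = x_0 - nl$. Dividing by $n$ and using the rotation vector formula from the preceding lemma, $\lim_{n\to\infty} x_{nk}/n = \omega\cdot k$, so $\omega\cdot k = -l$, i.e.\ $\omega\cdot k + l = 0$.

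Next I would handle the inequality alternatives. If $\tau_{k,l}x > x$, then $x_{i+k}+l\geq x_i$ for all $i$; chaining the inequalities along $0, k, 2k, \dots$ gives $x_{nk} \geq x_0 - nl$ for every $n\geq 0$. Dividing by $n$ and taking the limit produces $\omega\cdot k\geq -l$, so $\omega\cdot k+l\geq 0$. Symmetrically, the alternative $\tau_{k,l}x < x$ yields $\omega\cdot k + l\leq 0$. Combining the three implications gives the contrapositive of the claim: if $\omega\cdot k+l>0$, the alternatives $\tau_{k,l}x = x$ and $\tau_{k,l}x < x$ are excluded, forcing $\tau_{k,l}x > x$; and if $\omega\cdot k+l<0$, the alternatives $\tau_{k,l}x = x$ and $\tau_{k,l}x > x$ are excluded, forcing $\tau_{k,l}x < x$.

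There is essentially no obstacle here: the argument is just iteration of the partial-order comparison plus the existence of the rotation vector, both of which are provided by the preceding lemma. The only point requiring care is that the Birkhoff trichotomy is genuinely trichotomous (using the convention $x<y$ allows some coordinates to coincide as long as $x\neq y$), so that the three sign conclusions together cover every possibility and the contrapositive closes the argument.
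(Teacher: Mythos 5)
Your proof is correct and takes essentially the same route as the paper: both arguments iterate the inequality $x_{i+k}+l \gtrless x_i$ along the orbit $0,k,2k,\dots$ and then invoke the rotation-vector bound from the preceding lemma to pin down the sign of $\omega\cdot k+l$. The paper packages this as a single contradiction using the uniform bound $|x_i-(x_0+\omega\cdot i)|\leq 1$, whereas you run the trichotomy explicitly and read off the contrapositive via $\lim_{n\to\infty}x_{nk}/n=\omega\cdot k$; these are interchangeable presentations of the same argument.
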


\subsection{Hull functions}
Important tools for the study of a Birkhoff configuration are its so-called hull functions. 

\begin{definition}\label{defhull}
Let $x:\Z^d\to\R$ be a Birkhoff configuration of rotation vector $\omega\in\R^d\backslash\Q^d$. We define the {\it lower hull function} $\phi_x^-:\R\to\R$ and the {\it upper hull function} $\phi_x^+:\R\to\R$ of $x$ by
\begin{align}
\phi_x^-(s):= & \lim_{n\to\infty} x_{k_n}+l_n\ \mbox{for any sequence} \ (k_n, l_n) \ \mbox{with} \ \omega\cdot k_n+l_n \nearrow s \ \mbox{as} \ n\to\infty \ . \\
\phi_x^+(s):= & \lim_{n\to\infty} x_{k_n}+l_n\ \mbox{for any sequence} \ (k_n, l_n) \ \mbox{with} \ \omega\cdot k_n+l_n \searrow s \ \mbox{as} \ n\to\infty \ .
\end{align}
\end{definition}
To avoid confusion, let us remark that the requirement in Definition \ref{defhull} that $\omega\cdot k_n+l_n\nearrow s$ as $n\to\infty$ means that $\lim_{n\to\infty} \omega\cdot k_n + l_n=s$ and $\omega\cdot k_n+l_n<s$ for all $n\in \N$. Similarly, by $\omega\cdot k_n+l_n\searrow s$ as $n\to\infty$ we mean that $\lim_{n\to\infty} \omega\cdot k_n + l_n=s$ and $\omega\cdot k_n+l_n>s$ for all $n\in \N$.

We summarize some well-known properties of the hull functions in the following proposition. Its proof will be given in Appendix \ref{proofappendix} and is fully based on Proposition \ref{numbertheory}. Although this proof is elementary, it is a bit delicate at some points nonetheless.
\begin{proposition}\label{propertieshull}
Let $x:\Z^d\to\R$ be a Birkhoff configuration of rotation vector $\omega\in\R^d\backslash\Q^d$.
\begin{itemize}
\item[1)] $\phi_x^-$ and $\phi_x^+$ are well-defined.
\item[2)] $\phi_x^-\leq \phi_x^+$ and $\phi_x^-(\omega\cdot i)\leq x_i \leq \phi_x^+(\omega\cdot i)$ for all $i\in \Z^d$.
\item[3)] $\phi_x^-$ and $\phi_x^+$ are nondecreasing.
\item[4)] $\phi_x^-(s+1)=\phi_x^-(s)+1$ and $\phi_x^+(s+1)=\phi^+_x(s)+1$ for all $s\in \R$. 
\item[5)] $\phi^-_{\tau_{k,l}x}(s)=\phi^-_{x}(s+\omega\cdot k+l)$ and $\phi^+_{\tau_{k,l}x}(s)=\phi^+_{x}(s+\omega\cdot k+l)$.
\item[6)] $\phi_x^-$ is left-continuous and $\phi_x^+$ is right-continuous. 
\item[7)] If $s_n\nearrow s$, then $\phi^+_x(s_n)\to\phi_x^-(s)$ and if $s_n\searrow s$, then $\phi^-_x(s_n)\to\phi_x^+(s)$.
\item[8)] $\phi_x^-(s)=\phi_x^+(s)$ if and only if $\phi_x^-$ is continuous at $s$ if and only if $\phi_x^+$ is continuous at $s$. 
\item[9)] $\phi_x^-=\phi_x^+$ Lebesgue almost everywhere.
\end{itemize}
\end{proposition}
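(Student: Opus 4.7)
The strategy is to extract from Proposition \ref{numbertheory} a single strict monotonicity principle and then deduce all nine items from classical real-variable facts about bounded monotone functions. The key observation is: if $\omega\cdot k + l < \omega\cdot k' + l'$, then $\omega\cdot(k'-k) + (l'-l) > 0$, so Proposition \ref{numbertheory} gives $\tau_{k'-k,l'-l}x > x$; evaluating at index $k$ yields $x_{k'} + l' > x_k + l$. Hence the map $f(k,l) := x_k + l$ is \emph{strictly} monotonic with respect to the real number $\omega\cdot k + l$. Because $\omega\in\R^d\setminus\Q^d$, the map $(k,l)\mapsto \omega\cdot k + l$ is injective (so $f$ descends to a well-defined function on its dense image in $\R$) and its image $D := \{\omega\cdot k + l : (k,l)\in \Z^d\times\Z\}$ is dense in $\R$. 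Moreover, the bound $|x_i - (x_0 + \omega\cdot i)| \le 1$ from the preceding lemma shows that $f$ is locally bounded in terms of $\omega\cdot k + l$. A bounded monotone function on a dense subset of $\R$ automatically has one-sided limits at every real point, so $\phi_x^-$ and $\phi_x^+$ are well defined and monotone nondecreasing, settling (1) and (3). For (2), specialize the strict monotonicity to $(k',l') = (i,0)$ to sandwich $x_i$ between the approximating values, then pass to the limit.

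Items (4) and (5) are direct substitutions: if $\omega\cdot k_n + l_n \nearrow s$, then $(k_n, l_n+1)$ gives $\omega\cdot k_n + (l_n+1)\nearrow s+1$ with $f$-values shifted by $1$, proving (4); similarly $(\tau_{k,l}x)_{k_n} + l_n = x_{k_n+k} + (l_n+l)$ with base point $\omega\cdot(k_n+k) + (l_n+l) \nearrow s + \omega\cdot k + l$, proving (5). For (6), consider $s_n \nearrow s$. By density of $D$, choose $(k_n, l_n)$ with $s_n < \omega\cdot k_n + l_n < s$ and $\omega\cdot k_n + l_n \nearrow s$; monotonicity gives $\phi_x^-(s_n) \le x_{k_n} + l_n \le \phi_x^-(s)$, and the right side converges to $\phi_x^-(s)$ by definition, so $\phi_x^-(s_n) \to \phi_x^-(s)$. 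Right-continuity of $\phi_x^+$ is symmetric. For (7), first establish the general sandwich $\phi_x^+(t) \le \phi_x^-(s)$ whenever $t < s$: pick $(k_n,l_n)\searrow t$ and $(k_m',l_m')\nearrow s$; for all sufficiently large $n,m$ one has $\omega\cdot k_n + l_n < \omega\cdot k_m' + l_m'$, hence $x_{k_n}+l_n \le x_{k_m'}+l_m'$, and passing to limits gives the claim. Applied to $t = s_n \nearrow s$, this combined with (2) and (6) gives $\phi_x^-(s) = \lim \phi_x^-(s_n) \le \lim \phi_x^+(s_n) \le \phi_x^-(s)$.

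Statement (8) is then a formal consequence: since $\phi_x^+$ is right-continuous by (6), it is continuous at $s$ iff its left limit equals $\phi_x^+(s)$; but by (7) that left limit is $\phi_x^-(s)$, so continuity of $\phi_x^+$ at $s$ is equivalent to $\phi_x^-(s) = \phi_x^+(s)$. The same reasoning with the roles of $\phi_x^\pm$ swapped handles $\phi_x^-$. Finally, (9) follows because $\phi_x^-$ and $\phi_x^+$ are nondecreasing and agree outside the at most countable set of discontinuities of either, which has Lebesgue measure zero. The main technical obstacle I foresee is item (1): one must simultaneously check existence of the limit and its independence from the approximating sequence. Both fall out once one has recognized the strict monotonicity described above together with the irrationality of $\omega$ (which rules out collisions in $D$); thereafter the rest of the proposition reduces to bookkeeping about monotone real functions.
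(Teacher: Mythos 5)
Your proof is correct in overall structure and follows essentially the same route as the paper: reduce everything to the order comparison supplied by Proposition~\ref{numbertheory} and then invoke standard one-sided-limit facts for bounded nondecreasing functions. Two of your supporting claims are false as stated, however, and should be corrected even though the argument survives. First, the paper's relation $x>y$ means only $x\neq y$ together with $x_i\geq y_i$ for all $i$, \emph{not} pointwise strict inequality; so Proposition~\ref{numbertheory} yields $x_{k'}+l'\geq x_k+l$, not the strict inequality you assert. This non-strict monotonicity is essential rather than cosmetic --- it is exactly what allows the hull function to have flat intervals, which is the mechanism behind the whole simplex of laminations in Theorem~\ref{mainthm}. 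Second, for $d>1$ the map $(k,l)\mapsto\omega\cdot k+l$ need not be injective when $\omega\in\R^d\setminus\Q^d$: the hypothesis only forbids $\omega$ having all components rational, so for example $\omega=(\sqrt2,\tfrac12)$ gives $\omega\cdot(0,2)+(-1)=0=\omega\cdot(0,0)+0$. Neither issue is fatal: work directly with $\sup\{\,x_k+l:\omega\cdot k+l<s\,\}$ and $\inf\{\,x_k+l:\omega\cdot k+l>s\,\}$, which are well defined and equal to the one-sided limits whether or not the level sets are singletons, and replace every ``strict'' by ``nondecreasing''; all nine items then go through exactly as you outlined.
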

Hull functions are in particular convenient for the study of recurrent Birkhoff configurations:
\begin{definition}\label{recurrentdef}
We say that a Birkhoff configuration $x:\Z^d\to\R$ of rotation vector $\omega\in\R^d\backslash \Q^d$ is {\it recurrent} if there exists a sequence $(k_n,l_n)\in \Z^d\times \Z$ with $\omega\cdot k_n + l_n \neq 0$ such that $$x= \lim_{n\to\infty} \tau_{k_n,l_n}x\ \mbox{pointwise}.$$ 
\end{definition}
It follows from Proposition \ref{numbertheory} that for the sequence $(k_n, l_n)$ of Definition \ref{recurrentdef} it must hold that $\lim_{n\to\infty} \omega\cdot k_n +l_n=0$ and therefore that $||k_n||\to \infty$ and $|l_n|\to\infty$ as $n\to \infty$. By passing to a subsequence, one can assume that either $\omega\cdot k_n + l_n \nearrow 0$ or that  $\omega\cdot k_n + l_n \searrow 0$. In the first case, it holds that $\omega\cdot (k_n+i) + l_n \nearrow \omega\cdot i$ and hence $x_i = \lim_{n\to \infty} (\tau_{k_n, l_n}x)_i = \lim_{n\to \infty}  x_{k_n+i}+l_n = \phi_x^-(\omega\cdot i)$. Similarly, in the second case $x_i =\phi^{+}_x(\omega\cdot i)$. Thus, a recurrent Birkhoff configuration can be reconstructed by ``sampling'' one of its hull functions. Conversely, any configuration $X$ of the form $X_i:=\phi_x^{\pm}(s+\omega\cdot i)$ is recurrent. This is one of the consequences of the following theorem, of which we defer the proof to Appendix \ref{proofappendix}.

\begin{theorem} \label{xplusminuslemma} Let $x$ be a Birkhoff configuration of irrational rotation vector $\omega\in \R^d\backslash \Q^d$ and let $\phi_x^{\pm}$ denote its hull functions.  Let us define 
$$\Gamma(x) := \{x^{\pm}(s)\ | \ s\in \R\}\subset \R^{\Z^d} \ \mbox{where}\ x^-(s)_i:= \phi_x^-(s+\omega\cdot i) \ \mbox{and} \ x^+(s)_i:=\phi^+_{x}(s+\omega\cdot i)\, . $$ 
Then $\Gamma(x)$ is ordered, translation-invariant, closed under pointwise convergence and minimal. In particular, every element of $\Gamma(x)$ is recurrent.
\end{theorem}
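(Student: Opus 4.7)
The plan is to verify the four properties of $\Gamma(x)$ directly from the catalogue established in Proposition \ref{propertieshull}, and then deduce recurrence from minimality. Translation-invariance is a one-line computation: the periodicity $\phi_x^\pm(s+1)=\phi_x^\pm(s)+1$ from property 4 gives
\[
(\tau_{k,l}x^\pm(s))_i = \phi_x^\pm(s+\omega\cdot(i+k))+l = \phi_x^\pm((s+\omega\cdot k+l)+\omega\cdot i) = x^\pm(s+\omega\cdot k+l)_i,
\]
so $\tau_{k,l}x^\pm(s)=x^\pm(s+\omega\cdot k+l)\in\Gamma(x)$. For orderedness, the key inequality $\phi_x^+(s)\leq\phi_x^-(s')$ whenever $s<s'$ follows by picking $r\in(s,s')$, using monotonicity (property 3) to write $\phi_x^-(r)\leq\phi_x^-(s')$, and then letting $r\searrow s$ via property 7. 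Combined with $\phi_x^-\leq\phi_x^+$ (property 2) this yields $x^\epsilon(s)\leq x^+(s)\leq x^-(s')\leq x^{\epsilon'}(s')$ for $s<s'$, while the case $s=s'$ reduces to property 2.

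For closedness I would take a pointwise limit $x^{\epsilon_n}(s_n)\to y$ and show $y\in\Gamma(x)$. Because $x^\pm(s+1)=\tau_{0,1}x^\pm(s)$ and $\phi_x^\pm|_{[0,1]}$ is bounded (being nondecreasing), finiteness of $y_0$ forces $\lfloor s_n\rfloor$ to be bounded; passing to a subsequence I may then assume $\epsilon_n$ is constant and $s_n\to s\in\R$. Splitting further into a monotone subsequence leaves three cases: (i) $s_n=s$ eventually, trivially; (ii) $s_n\nearrow s$, where left-continuity of $\phi_x^-$ (property 6) if $\epsilon_n=-$, or the cross-identity $\phi_x^+(s_n)\to\phi_x^-(s)$ of property 7 if $\epsilon_n=+$, yields $y=x^-(s)$; (iii) $s_n\searrow s$, where the symmetric arguments yield $y=x^+(s)$. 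In every case $y\in\Gamma(x)$.

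Minimality and recurrence then fall out at once. Since $\omega\notin\Q^d$, some component is irrational and the group $D:=\{\omega\cdot k+l:(k,l)\in\Z^d\times\Z\}$ is dense in $\R$. Given any $y_0=x^{\epsilon_0}(s_0)\in\Gamma(x)$ and an arbitrary target $x^{\epsilon'}(s')\in\Gamma(x)$, I pick $(k_n,l_n)$ with $\omega\cdot k_n+l_n$ converging to $s'-s_0$ strictly from below when $\epsilon'=-$ and strictly from above when $\epsilon'=+$; by the closedness analysis $\tau_{k_n,l_n}y_0=x^{\epsilon_0}(s_0+\omega\cdot k_n+l_n)$ converges to $x^{\epsilon'}(s')$, so every orbit is dense and $\Gamma(x)$ is minimal. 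Taking $y_0$ to be the target itself and requiring $\omega\cdot k_n+l_n\neq 0$ — possible because $D\setminus\{0\}$ accumulates at $0$ — delivers the recurrence of each element.

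The main obstacle I anticipate is the bookkeeping in the closedness step: the limit element $y$ depends simultaneously on the sign $\epsilon_n$ and the approach direction of $s_n$ to $s$, and the cross-continuity identities of property 7 are essential to ensure that, say, ``$\epsilon_n=+$ with $s_n\nearrow s$'' still lands in $\Gamma(x)$ (namely at $x^-(s)$, not at some phantom third value outside the set). Getting all four sign/direction combinations to match one of $\{x^-(s),x^+(s)\}$ is precisely what makes both the closedness statement and the two-sided approach trick in the minimality argument work in tandem.
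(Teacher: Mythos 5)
Your proof is correct and follows essentially the same route as the paper: shift-invariance from the periodicity property \emph{4)}, orderedness from properties \emph{2)}, \emph{3)} and \emph{7)}, closedness from the one-sided continuity properties \emph{6)} and \emph{7)}, and minimality/recurrence from density of $\{\omega\cdot k+l\}$ combined with the convergence identities just established. The only point where you go beyond the paper's exposition is in the closedness step, where you explicitly justify that a pointwise-convergent sequence $x^{\epsilon_n}(s_n)$ forces $s_n$ to stay bounded (via the $1$-periodicity and monotonicity of the hull functions) before extracting a monotone subsequence --- the paper takes this for granted, so your version is a little more complete but not a different argument.
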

Here, minimality means that $\Gamma(x)$ does not have any nonempty proper subset that is also closed and shift-invariant. We think of $\Gamma(x)$ as the minimal set generated by $x$. As explained above, it holds that $x\in \Gamma(x)$ if and only if $x$ is recurrent, and it follows from property {\it 2)} of Proposition \ref{propertieshull} that the collection $\{x\}\cup \Gamma(x)$ is ordered even if $x\notin \Gamma(x)$.

The following well-known result specifies the topological structure of $\Gamma(x)$. We recall that a topological space $\mathcal{C}$ is called a {\it Cantor set} if it is closed, perfect and totally disconnected. ``Perfect'' means that every element $c\in \mathcal{C}$ is a limit of points in $\mathcal{C}\backslash \{c\}$. ``Totally disconnected'' means that for any two elements $c_1, c_2\in \mathcal{C}$ one can decompose $\mathcal{C}$ as the disjoint union of closed sets $\mathcal{C}_1$  and $\mathcal{C}_2$ with $c_1\in \mathcal{C}_1$ and $c_2\in \mathcal{C}_2$. 

\begin{theorem}\label{Cantortheorem}
When $x$ is a Birkhoff configuration with rotation vector $\omega\in \R^d\backslash \Q^d$, then $\Gamma(x)$ is either a topologically connected or a Cantor subset of $\R^{\Z^d}$.
\end{theorem}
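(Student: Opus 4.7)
I would split into two cases based on whether the hull function $\phi_x^-$ is continuous on $\R$ or has at least one jump discontinuity.

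In the continuous case, property (8) of Proposition \ref{propertieshull} gives $\phi_x^- = \phi_x^+ =: \phi_x$, so $x^-(s) = x^+(s) =: y(s)$ for all $s \in \R$. The map $s \mapsto y(s)$ from $\R$ to $\R^{\Z^d}$ is continuous in the product topology, because each coordinate $y(s)_i = \phi_x(s + \omega \cdot i)$ depends continuously on $s$. As the continuous image of the connected space $\R$, $\Gamma(x) = y(\R)$ is then connected.

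In the discontinuous case, let $D \subset \R$ denote the nonempty, countable, $\Z$-periodic discontinuity set of $\phi_x^{\pm}$. Since $\Gamma(x)$ is already closed by Theorem \ref{xplusminuslemma}, I would show that it is perfect and totally disconnected. For perfectness, given $y = x^-(s) \in \Gamma(x)$, I use that $\omega \cdot \Z^d + \Z$ is dense in $\R$ (which holds because $\omega \notin \Q^d$) to pick $(k_n, l_n) \in \Z^d \times \Z$ with $\omega \cdot k_n + l_n \nearrow 0$ and $\omega \cdot k_n + l_n \neq 0$. Then $y_n := \tau_{k_n, l_n} y \in \Gamma(x)$ by shift-invariance, $y_n \neq y$ by Proposition \ref{numbertheory}, and $(y_n)_i = \phi_x^-(s + \omega \cdot i + \omega \cdot k_n + l_n) \to \phi_x^-(s + \omega \cdot i) = y_i$ by left-continuity of $\phi_x^-$ (property 6). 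Hence $y$ is a limit point of $\Gamma(x) \setminus \{y\}$; the case $y = x^+(s)$ is handled symmetrically using $\searrow 0$ and right-continuity of $\phi_x^+$.

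For total disconnectedness, I would take distinct $y < y'$ in $\Gamma(x)$ with $y = x^{\varepsilon}(s)$ and $y' = x^{\varepsilon'}(s')$. Monotonicity of $\phi_x^{\pm}$ forces either $s < s'$, or $s = s'$ with $(\varepsilon, \varepsilon') = (-, +)$. In the latter case, $y \neq y'$ yields some $i_0 \in \Z^d$ with $s + \omega \cdot i_0 \in D$, and any $c$ in the gap $(\phi_x^-(s + \omega \cdot i_0), \phi_x^+(s + \omega \cdot i_0))$ works. The crux is the case $s < s'$: the key technical step is to use the density of $D + \omega \cdot \Z^d$ in $\R$ (which follows from $D \neq \emptyset$ together with the irrationality of $\omega$) to obtain $d \in D$ and $i \in \Z^d$ with $s + \omega \cdot i < d < s' + \omega \cdot i$. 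Property (7) of Proposition \ref{propertieshull} then implies $y_i \leq \phi_x^-(d) < \phi_x^+(d) \leq y'_i$, and any $c$ in this open gap is missed by the $i$-th coordinate map $z \mapsto z_i$ on $\Gamma(x)$, so $\{z \in \Gamma(x) : z_i < c\}$ and $\{z \in \Gamma(x) : z_i > c\}$ form a clopen partition separating $y$ from $y'$. The main obstacle is precisely this shifted-gap construction: the interval $(s, s')$ need not itself contain a discontinuity of $\phi_x$, and irrationality of $\omega$ is exploited essentially to translate an existing gap of the hull function into it.
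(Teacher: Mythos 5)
Your argument is correct and follows the same overall strategy as the paper: split on continuity of the hull function, use the continuous parameterization $s\mapsto x^{-}(s)$ to get connectedness in the first case, and in the discontinuous case deduce perfectness from recurrence and total disconnectedness by translating a hull-function gap into a coordinate gap via density of $\omega\cdot\Z^d+\Z$ in $\R$. Your coordinate-wise clopen partition $\{z_i<c\}\sqcup\{z_i>c\}$ is just a rephrasing of the paper's use of the translated gap $[x^{-}(t+\omega\cdot k+l),\,x^{+}(t+\omega\cdot k+l)]$ as the separating structure, and your handling of the $s=s'$ edge case is the same as the paper's.
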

Using the hull function, we provide a proof of this fact in Appendix \ref{proofappendix}. 

When $\Gamma(x)$ is connected, then we say that it forms a {\it foliation}. In case it is a Cantor set, one says that it forms a {\it lamination}. This terminology is probably due to Moser \cite{moser86}, \cite{moser89}.

We attribute the following result to Bangert \cite{bangert87}. It establishes the one-to-one correspondence between hull functions and minimal sets.


\begin{theorem}\label{clas2}
Let $x$ and $y$ be two Birkhoff configurations with rotation vector $\omega\in\R^d\backslash\Q^d$. Then the following are equivalent:
\begin{itemize}
\item[{\it 1)}] $\Gamma(x)=\Gamma(y)$. 
\item[{\it 2)}] $\Gamma(x)\cup\Gamma(y)$ is ordered.
\item[{\it 3)}] There is an $s\in\R$ for which $\phi_x^-(\cdot) = \phi_y^-(\cdot + s)$.
\item[{\it 4)}] There is an $s\in\R$ for which $\phi_x^+(\cdot) = \phi_y^+(\cdot + s)$.
\end{itemize}
\end{theorem}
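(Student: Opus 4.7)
The plan is to establish the cycle $1)\Rightarrow 2)\Rightarrow 1)$ together with the equivalences $1)\Leftrightarrow 3)$ and $1)\Leftrightarrow 4)$. First, $1)\Rightarrow 2)$ is immediate from Theorem \ref{xplusminuslemma}, which asserts that $\Gamma(x)$ is ordered. The implication $3)\Rightarrow 1)$ is also easy: if $\phi_x^-(\cdot)=\phi_y^-(\cdot+s)$ then $x^-(t)_i=\phi_x^-(t+\omega\cdot i)=\phi_y^-(t+s+\omega\cdot i)=y^-(t+s)_i$, and the right-hand limit via part 7 of Proposition \ref{propertieshull} yields $x^+(t)=y^+(t+s)$, so $\Gamma(x)=\Gamma(y)$. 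The implication $4)\Rightarrow 1)$ is completely analogous.

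The main obstacle is $2)\Rightarrow 1)$. The strategy is to exhibit a single common element of $\Gamma(x)\cap\Gamma(y)$: since both sets are minimal and translation-invariant (Theorem \ref{xplusminuslemma}) and their intersection is closed under pointwise convergence, any common element forces $\Gamma(x)\cap\Gamma(y)$ to coincide with both sides and hence $\Gamma(x)=\Gamma(y)$. Fix $z_0:=y^-(0)\in\Gamma(y)$ and set $s:=\sup\{u\in\R : x^-(u)\le z_0\}$. Orderedness of $\Gamma(x)\cup\{z_0\}$, monotonicity of $u\mapsto x^-(u)$, and the identity $\phi_x^-(u+1)=\phi_x^-(u)+1$ (part 4 of Proposition \ref{propertieshull}) ensure $s$ is a finite real number, and left-continuity (part 6) together with part 7 yield
$$x^-(s)\le z_0\le x^+(s).$$
Applying $\tau_{k,l}$, together with the equivariance $\tau_{k,l}x^\pm(u)=x^\pm(u+\omega\cdot k+l)$, produces the analogous sandwich
$$x^-(s+\omega\cdot k+l)\le \tau_{k,l}z_0\le x^+(s+\omega\cdot k+l)\quad\text{for all } (k,l)\in\Z^d\times\Z.$$
Since the nondecreasing function $\phi_x^-$ has at most countably many discontinuities and $\Z^d$ is countable, we may pick $t_0\in\R$ at which $\phi_x^-$ is continuous at every point of $t_0+\omega\cdot\Z^d$; by part 8 of Proposition \ref{propertieshull}, $x^-(t_0)=x^+(t_0)=:w_0\in\Gamma(x)$. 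Because $\omega\cdot\Z^d+\Z$ is dense in $\R$ (by irrationality of $\omega$), one can choose $(k_n,l_n)$ with $t_n:=s+\omega\cdot k_n+l_n\to t_0$; after passing to a monotone subsequence, parts 6 and 7 of Proposition \ref{propertieshull} force $x^-(t_n)$ and $x^+(t_n)$ to converge componentwise to $w_0$, and the squeeze then gives $\tau_{k_n,l_n}z_0\to w_0$. Closedness of $\Gamma(y)$ under pointwise convergence yields $w_0\in\Gamma(y)$, completing the proof of $2)\Rightarrow 1)$.

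For $1)\Rightarrow 3)$, assume $\Gamma(x)=\Gamma(y)$. Then $y^-(0)\in\Gamma(x)$, so $y^-(0)=x^\epsilon(s_0)$ for some $s_0\in\R$ and $\epsilon\in\{-,+\}$, i.e., $\phi_y^-(\omega\cdot i)=\phi_x^\epsilon(s_0+\omega\cdot i)$ for all $i\in\Z^d$. Part 4 of Proposition \ref{propertieshull} extends this equality to the dense set $\omega\cdot\Z^d+\Z$, and approximating an arbitrary $t\in\R$ from below by $t_n\nearrow t$ through this set, the left-continuity of $\phi_y^-$ (part 6) together with part 7 applied to $\phi_x^\epsilon$ give $\phi_y^-(t)=\phi_x^-(s_0+t)$ for all $t\in\R$. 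Hence $\phi_x^-(\cdot)=\phi_y^-(\cdot-s_0)$, which is condition 3) with $s=-s_0$. The implication $1)\Rightarrow 4)$ is proved identically starting from $y^+(0)\in\Gamma(x)$ and invoking right-continuity.
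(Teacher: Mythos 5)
Your proof is correct but takes a genuinely different route from the paper's. The paper organizes its whole argument around the pair $s^- := \max\{s : \phi_x^- \geq \phi_y^-(\cdot+s)\}$ and $s^+ := \min\{s : \phi_x^- \leq \phi_y^-(\cdot+s)\}$: if $s^-=s^+$ it reads off {\it 3)} and hence {\it 1)}, while if $s^-<s^+$ it produces explicit crossing elements $x^-(s_1)$ and $y^-(s_1+s)$, showing $\Gamma(x)\cup\Gamma(y)$ is unordered and so {\it 2)} (and a fortiori {\it 1)}) fails; together with {\it 3)}$\,\Leftrightarrow\,${\it 4)} and the trivial {\it 1)}$\,\Rightarrow\,${\it 2)} this closes the cycle. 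You instead prove each implication head-on, and the real novelty is in {\it 2)}$\,\Rightarrow\,${\it 1)}: rather than exhibiting a crossing, you locate a parameter $s$ with $x^-(s)\le z_0\le x^+(s)$, translate, squeeze at a joint continuity point of the hull function, and land on a single common element $w_0\in\Gamma(x)\cap\Gamma(y)$; minimality of each $\Gamma$ then forces equality. This sidesteps both the existence argument for $s^\pm$ and the crossing construction, turning the contrapositive proof of the paper into a direct one, and it makes transparent that two mutually ordered minimal translation-invariant sets must coincide. Your {\it 1)}$\,\Rightarrow\,${\it 3)} is also a direct argument (identifying $y^-(0)$ as some $x^\epsilon(s_0)$ and pushing the identity to all of $\R$ via left-continuity and property {\it 7)}) not appearing in the paper, which obtains {\it 3)} only as a byproduct of the $s^-=s^+$ case. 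Both proofs lean on the same tools — properties {\it 4)}, {\it 6)}, {\it 7)}, {\it 8)} of Proposition \ref{propertieshull}, density of $\omega\cdot\Z^d+\Z$, and the structure of $\Gamma(x)$ from Theorem \ref{xplusminuslemma} — so the dependency footprint is identical; yours is arguably cleaner and more conceptual, the paper's is more economical in that a single dichotomy settles all four conditions at once.
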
 
\begin{proof} 
First of all, in view of property {\it 7)} of Proposition \ref{propertieshull}, it is clear that {\it 3)} and {\it 4)} are equivalent. Let us now define 
$$s^-:=\max \{s\in\R\ | \ \phi_x^-(\cdot) \geq \phi_y^-(\cdot+s)\}\ \mbox{and}\ s^+:= \min \{s\in\R\ | \ \phi_x^-(\cdot)\leq \phi_y^-(\cdot+s) \}\, .$$
We claim that these quantities exist. Indeed, if $\phi_y^-(\cdot+s_n)\leq \phi_x^-(\cdot)$ for a sequence $s_n\nearrow s_{\infty}$, then $\phi_y^-(\cdot+s_{\infty})\leq \phi_x^-(\cdot)$ because $\phi_y^-$ is left-continuous. Thus, the maximum in the definition of $s^-$ is attained. To prove that $s^+$ exists, one remarks that $s^+=\min\{s\in\R\ | \ \phi_x^-(\cdot-s)\leq\phi_y^-(\cdot)\}$. Thus, if $\phi_x^-(\cdot-s_n)\leq \phi_y^-(\cdot)$ for a sequence $s_n\searrow s_{\infty}$, then $\phi_x^-(\cdot-s_{\infty})\leq \phi_y^-(\cdot)$.

Now we consider two cases. First of all, it may happen that $s^-=s^+$. Then $\phi_x^-(\cdot)=\phi_y^-(\cdot+s^-)$, that is $\phi_x^-$ is a translate of $\phi_y^-$. Then also $\phi_x^{+}(\cdot)=\phi_y^+(\cdot+s^-)$ and therefore the sets $\{x^{\pm}(s)\, |\, s\in\R\}$ and $\{y^{\pm}(s)\, |\, s\in\R\}$ coincide, that is $\Gamma(x)=\Gamma(y)$. 

If $s^-\neq s^+$, then it must hold that $s^-<s^+$ because $\phi_x^-$ and $\phi_y^-$ are nondecreasing. For any $s^-<s<s^+$, there then are $s_1, s_2$ so that $\phi_x^-(s_1)<\phi_y^-(s_1+s)$ and $\phi_x^-(s_2)>\phi_y^-(s_2+s)$. By periodicity of the hull functions, we can choose $s_1<s_2$.


We now claim that $x^-(s_1)$ and $y^-(s_1+s)$ cross. Indeed, it holds that $x^-(s_1)_0=\phi_x^-(s_1)<\phi_y^-(s_1+s)=y^-(s_1+s)_0$. At the same time, choosing $k_n,l_n$ such that $s_1+ \omega\cdot k_n+l_n\nearrow s_2$, we have that $\lim_{n\to\infty} x^-(s_1)_{k_n}+l_n = \phi_x^-(s_2)>\phi_y^-(s_2+s)=\lim_{n\to\infty}y^-(s_1+s)_{k_n}+l_n$. Thus, there is an $n$ for which $x^-(s_1)_{k_n}>y^-(s_1+s)_{k_n}$. Because $x^-(s_1)\in\Gamma(x)$ and $y^-(s_1+s)\in\Gamma(y)$ this means that $\Gamma(x) \cup \Gamma(y)$ is not ordered and therefore certainly $\Gamma(x)\neq \Gamma(y)$. This proves the theorem.
\end{proof}
\noindent
Let us finish this section with a result that relates pointwise convergence of recurrent Birkhoff configurations to the convergence of their hull functions:
 
\begin{theorem}\label{prevaguetheorem}
Let $x^1, x^2, \ldots, x$ be recurrent Birkhoff configurations of rotation vector $\omega\in\R^d\backslash \Q^d$ and let $\phi^{\pm}_{x^1}, \phi^{\pm}_{x^2}, \ldots, \phi^{\pm}_x$ denote their respective hull functions. When $\lim_{n\to\infty} x^n=x$, then $\lim_{n\to\infty}\phi_{x^n}^{\pm} = \phi_x^{\pm}$ almost everywhere.
\end{theorem}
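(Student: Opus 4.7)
My plan is to first reduce the statement to pointwise convergence at every continuity point of the common hull $\phi_x := \phi_x^- = \phi_x^+$, and then to establish this pointwise convergence by sandwiching $\phi_{x^n}^\pm(s)$ between values of $x^n$ at two well-chosen lattice translates bracketing $s$. Since $\phi_x^\pm$ are nondecreasing (Proposition \ref{propertieshull}, part 3) and coincide off a countable set (parts 8--9), the set of continuity points of $\phi_x$ is of full Lebesgue measure, so the desired almost-everywhere convergence reduces to this pointwise version.

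Step by step: I would fix a continuity point $s$ of $\phi_x$ and an $\epsilon>0$. Using that $\omega\notin\Q^d$ makes $\{\omega\cdot k+l:k\in\Z^d,\,l\in\Z\}$ dense in $\R$ and that $\phi_x$ has only countably many discontinuities, I can select pairs $(k_1,l_1)$ and $(k_2,l_2)$ such that
\[
\omega\cdot k_1+l_1 \;<\; s \;<\; \omega\cdot k_2+l_2,
\]
each $\omega\cdot k_j+l_j$ is a continuity point of $\phi_x$, and $|\phi_x(\omega\cdot k_j+l_j)-\phi_x(s)|<\epsilon$. At such continuity points, parts 2 and 4 of Proposition \ref{propertieshull} identify $x_{k_j}+l_j = \phi_x(\omega\cdot k_j+l_j)$. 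The crucial ingredient, however, is that for every Birkhoff configuration $y$ of rotation vector $\omega$ the same proposition yields a universal sandwich
\[
y_{k_1}+l_1 \,\leq\, \phi_y^+(\omega\cdot k_1+l_1) \,\leq\, \phi_y^-(s) \,\leq\, \phi_y^+(s) \,\leq\, \phi_y^-(\omega\cdot k_2+l_2) \,\leq\, y_{k_2}+l_2,
\]
whose outer inequalities come from property 2 together with periodicity and whose inner inequalities come from monotonicity combined with property 7, by approaching $\omega\cdot k_j+l_j$ from within the two open intervals bracketing $s$.

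Finally I would apply the sandwich with $y=x^n$, let $n\to\infty$, and use $x^n\to x$ pointwise to deduce $x^n_{k_j}+l_j\to\phi_x(\omega\cdot k_j+l_j)$. The sandwich then forces
\[
\phi_x(s)-\epsilon \,\leq\, \liminf_n \phi_{x^n}^-(s) \,\leq\, \limsup_n \phi_{x^n}^+(s) \,\leq\, \phi_x(s)+\epsilon,
\]
from which $\phi_{x^n}^\pm(s)\to\phi_x(s)$ follows since $\epsilon$ is arbitrary. The main delicacy I anticipate is the inner sandwich, where the interplay between $\phi^-$ and $\phi^+$ has to be handled through property 7; the trick is precisely to pick the bracketing points $\omega\cdot k_j+l_j$ outside the countable discontinuity set of $\phi_x$, so that the limit of $x^n_{k_j}+l_j$ can be unambiguously identified with $\phi_x(\omega\cdot k_j+l_j)$ in the final step.
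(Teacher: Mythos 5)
Your proof is correct, and the underlying idea is the same as the paper's: reduce to pointwise convergence at continuity points of $\phi_x$ (a full-measure set), then bracket by values of the configuration at lattice translates that straddle $s$ and lie close to it. The paper carries this out by first using recurrence to write $x^n_i=\phi_{x^n}(\omega\cdot i)$ for one of the two hull functions, so that pointwise convergence of $x^n$ becomes convergence of $\phi_{x^n}$ on the dense orbit set, and then sandwiching $\phi_{x^n}(t)$ between $\phi_{x^n}(s_1)$ and $\phi_{x^n}(s_2)$ by monotonicity; at the end it invokes ``$\phi_{x^n}^{\pm}=\phi_{x^n}$ a.e.'' to pass back to the $\pm$ hull functions. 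Your version is tighter on exactly that last point: by sandwiching $\phi_{x^n}^{\pm}(s)$ directly between $x^n_{k_1}+l_1$ and $x^n_{k_2}+l_2$ via properties 2, 4 and 7 of Proposition \ref{propertieshull}, you get both hull functions of $x^n$ at once, avoiding the need to track, for each $n$, on which ($n$-dependent) null set $\phi_{x^n}^-$ and $\phi_{x^n}^+$ disagree. As a small bonus, your argument uses only the squeeze $\phi_x^-(\omega\cdot i)\le x_i\le\phi_x^+(\omega\cdot i)$ from property 2 to identify $x_{k_j}+l_j=\phi_x(\omega\cdot k_j+l_j)$ at the chosen continuity points, rather than the recurrence identity $x_i=\phi_x(\omega\cdot i)$ itself, so the recurrence hypothesis is used only through the fact that it is part of the statement; the argument itself would apply to any Birkhoff configurations. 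One harmless point worth saying explicitly: for property 2 at index $k_j$ you need $\omega\cdot k_j$ (not just $\omega\cdot k_j+l_j$) to be a continuity point of $\phi_x$, but by property 4 these are equivalent, so your choice already ensures it.
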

\begin{proof}
Because $x^1, x^2, \ldots, x$ are recurrent, it holds that $x_i^n=\phi_{x^n}(\omega\cdot i)$ and $x_i=\phi_x(\omega\cdot i)$ where $\phi_{x^n}$ is one of the two hull functions of $x^n$ and $\phi_{x}$ is one of the two hull functions of $x$. The assumption that $x^n\to x$ pointwise therefore just means that $\phi_{x^n} \to \phi_x$ pointwise on the dense set $\{\omega\cdot i\, |\, i\in \Z^d\} \subset \R$. 

Now we recall that $\phi_{x}$ is continuous almost everywhere and let $t\in\R$ be a point of continuity. Let $\varepsilon>0$ be given and choose a $\delta>0$ so that $\phi_x(t)-\varepsilon < \phi_{x}(s) < \phi_x(t)+\varepsilon$ for all $t-\delta<s<t+\delta$. Next, we choose $t-\delta < s_1<t<s_2<t+\delta$ with the property that $\lim_{n\to\infty} \phi_{x^n}(s_{1,2}) = \phi_x(s_{1,2})$. The inequalities $\phi_{x^n}(s_1)\leq \phi_{x^n}(t)\leq \phi_{x^n}(s_2)$ together with all the above then yield that
$$\phi_x(t)-\varepsilon\leq \phi_x(s_1) = \lim_{n\to\infty} \phi_{x^n}(s_1) \leq \lim_{n\to\infty} \phi_{x^n}(t) \leq  \lim_{n\to\infty} \phi_{x^n}(s_2) = \phi_x(s_2)\leq\phi_x(t)+\varepsilon\, .$$
Since this is true for all $\varepsilon>0$, we find that $\lim_{n\to\infty} \phi_{x^n}(t) = \phi_x(t)$ at any point $t$ of continuity of $\phi_x$, and hence  $\lim_{n\to\infty} \phi_{x^n} = \phi_x$ almost everywhere. The result now follows because $\phi_{x^n}^{\pm}=\phi_{x^n}$ almost everywhere and $\phi_{x}^{\pm}=\phi_{x}$ almost everywhere.
\end{proof}

\begin{example}\label{ex1}
Let $0\leq \sigma_1< \ldots < \sigma_N<1$ and let $p_1, \ldots, p_N\geq 0$ be so that $p_1+\ldots +p_N=1$. 
Then there is precisely one left-continuous hull function $\phi_p^-:\R\to \R$ for which 
$$\lim_{s\nearrow 0}\phi_p^-(s)\leq 0\, ,\ \lim_{s\searrow 0}\phi_p^-(s)> 0\ \mbox{and}\ |(\phi_p^-)^{-1}(\sigma_j)|=p_j\ \mbox{for all}\ 1\leq j\leq N\, .$$ 
In view of Theorem \ref{clas2}, this means that for any $\omega\in \R^d\backslash \Q^d$, the set of laminations of rotation vector $\omega$ and taking values in $\{\sigma_1, \ldots, \sigma_N\}+\Z$, is in one-to-one correspondence with the $(N-1)$-dimensional simplex $\Delta_{N-1}$. 

Moreover, if $p^1, p^2, \ldots, p\in \Delta^{N-1}$ form a sequence for which $\lim_{n\to\infty} p^n=p$, then it is clear that $\lim_{n\to \infty} \phi^-_{p^n}(t)=\phi^-_p(t)$ at every point $t$ at which $\phi_p^-$ is continuous. Because $\phi_p^-$ has at most $N$ geometrically distinct discontinuities, it follows that $\lim_{n\to \infty} \phi_{p^n}^-=\phi^-_p$ almost everywhere.
\end{example}

\subsection{The hull function interpreted as an invariant measure} \label{measuresection}
We now give a statistical interpretation of the hull function and a corresponding characterization of minimal sets. Below, $|\cdot|$ denotes Lebesgue measure on $\R/\Z$, $\sharp$ cardinality and $B_n(0)=\{i\in \Z^d\, | \, ||i||\leq n \, \} \subset \Z^d$ the ball of radius $n$.

\begin{theorem} \label{clas3} Let $x:\Z^d\to \R$ be a recurrent Birkhoff configuration of rotation vector $\omega\in\R^d\backslash \Q^d$ and $\Sigma\subset\R/\Z$ an interval. Then the limit
$$\mu_x(\Sigma):= \lim_{n\to\infty} \frac{\sharp\{i\in B_n(0)\ | \ x_i \!\!\!\! \mod 1 \in \Sigma\}}{\sharp B_n(0)}\ \mbox{exists and equals}\ |\phi_x^{-1}(\Sigma)|\, .$$
Here $\phi_x=\phi_x^{\pm}$ is either one of the two hull functions of $x$ viewed as a map from $\R/\Z$ to $\R/\Z$.
\end{theorem}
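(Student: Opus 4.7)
First I would use the observation, explained after Definition \ref{recurrentdef}, that recurrence of $x$ gives the pointwise identity $x_i = \phi_x(\omega\cdot i)$ for all $i\in\Z^d$, where $\phi_x$ is one of the two hull functions $\phi_x^\pm$. Combined with the 1-periodicity $\phi_x(s+1)=\phi_x(s)+1$ from item 4 of Proposition \ref{propertieshull}, this means $\phi_x$ descends to a nondecreasing degree-one map on $\R/\Z$, and the condition $x_i\bmod 1\in\Sigma$ becomes $\omega\cdot i\bmod 1\in\phi_x^{-1}(\Sigma)$. So the quantity in the theorem is the asymptotic density of those $i\in B_n(0)$ whose image $\omega\cdot i$ lands in $\phi_x^{-1}(\Sigma)\bmod 1$.

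Next I would check that $\phi_x^{-1}(\Sigma)\subset\R/\Z$ is either empty or a single arc, so that $|\phi_x^{-1}(\Sigma)|$ is a well-defined Lebesgue measure. This is a straightforward consequence of monotonicity combined with degree one: on a fundamental domain $[0,1)$ the lift of $\phi_x$ sweeps once through an interval of length $1$, so the preimage of a proper arc, once reduced modulo $\Z$, is a single arc of $\R/\Z$. Its boundary has at most two points, and by item 9 of Proposition \ref{propertieshull} its Lebesgue measure does not depend on whether one uses $\phi_x^-$ or $\phi_x^+$.

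The final step would be to invoke Weyl equidistribution for the sequence $\{\omega\cdot i\bmod 1 : i\in B_n(0)\}$. Under the irrationality assumption $\omega\cdot k+l\neq 0$ for every nonzero $(k,l)\in\Z^d\times\Z$, and using that the $\ell^1$-balls $B_n(0)$ form a F{\o}lner sequence in $\Z^d$, one has
$$\lim_{n\to\infty}\frac{\sharp\{i\in B_n(0) : \omega\cdot i\bmod 1\in U\}}{\sharp B_n(0)} = |U|$$
for every Jordan-measurable $U\subset\R/\Z$; applying this with $U=\phi_x^{-1}(\Sigma)$ then closes the argument. The hardest point is precisely this equidistribution along $\ell^1$-balls rather than the hyperrectangles for which Weyl's theorem is usually stated. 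I would handle it either by sandwiching $B_n(0)$ between unions of small hypercubes (using $\sharp\partial B_n(0)/\sharp B_n(0)\to 0$) and applying Weyl's theorem cube by cube, or directly via Weyl's criterion on $B_n(0)$, whose exponential sums admit easy coordinate-wise geometric bounds.
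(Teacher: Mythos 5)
Your proof follows essentially the same route as the paper's: both reduce the claim, via the recurrent-hull identity $x_i=\phi_x(\omega\cdot i)$, to equidistribution of $\{\omega\cdot i \bmod 1 : i\in B_n(0)\}$ with respect to Lebesgue measure, apply it to the arc $\phi_x^{-1}(\Sigma)$, and then use item {\it 9)} of Proposition \ref{propertieshull} to see the answer is independent of the choice of hull function. The paper simply cites this equidistribution as a known strong ergodicity fact for the $\Z^d$-rotation on $\R/\Z$, whereas you explicitly flag the nonstandard averaging over $\ell^1$-balls and sketch how to handle it (F\o lner sandwiching by cubes, or Weyl's criterion), which is a welcome amount of extra care. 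One small imprecision: you state the equidistribution under the hypothesis $\omega\cdot k+l\neq 0$ for all nonzero $(k,l)\in\Z^d\times\Z$, which is strictly stronger than the theorem's assumption $\omega\in\R^d\setminus\Q^d$; the latter already suffices, since it guarantees $\omega\cdot\Z^d+\Z$ is dense in $\R$ and hence the rotation is minimal and uniquely ergodic (equivalently, for each nonzero $m\in\Z$ some $m\omega_j\notin\Z$, so the relevant Weyl sums vanish in the limit).
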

\begin{proof}
Let us recall that when $\omega\in \R^d\backslash \Q^d$, then the action 
$$(i, s\!\!\!\! \mod \! 1)\mapsto s+\omega\cdot i\!\!\!\! \mod \! 1\ \mbox{of}\ \Z^d\ \mbox{on}\ \R/\Z\ $$
is ergodic in the following strong sense: for every $s\in\R$ and every interval $S\subset\R/\Z$, the limit
\begin{align}\label{pxcont1}
\lim_{n\to\infty} \frac{\sharp\{i\in B_n(0)\, | \, s+\omega\cdot i \!\!\!\! \mod 1 \in S\}}{\sharp B_n(0)}\ \mbox{exists and is equal to}\ |S|  \, .
\end{align}
Let us now choose any configuration $x^{\pm}(s) \in \Gamma(x)$ in the minimal set generated by $x$.
Then we can apply (\ref{pxcont1}) to the interval $S=(\phi_x^{\pm})^{-1}(\Sigma)$, where $\Sigma\subset \R/\Z$ is any interval. 
Recalling that $x^{\pm}(s)_i=\phi_x^{\pm}(s+\omega\cdot i)$, and hence that $x^{\pm}(s)_i \! \mod 1 \in \Sigma$ if and only if $s+\omega\cdot i\in S$, this yields that
\begin{align}\label{pxcont2}
\lim_{n\to\infty} \frac{\sharp\{i\in B_n(0)\ | \ x^{\pm}(s)_i \!\!\!\! \mod 1 \in \Sigma\}}{\sharp B_n(0)}\ \mbox{exists and is equal to}\ |(\phi_x^{\pm})^{-1}(\Sigma)|  \, .
\end{align}
Because $\phi_x^-=\phi_x^+$ almost everywhere, $|(\phi_x^-)^{-1}(\Sigma)| = |(\phi_x^+)^{-1}(\Sigma)|$. 
This finishes the proof. 
 \end{proof}
 Being a nondecreasing function, every hull function $\phi_x^{\pm}:\R\to\R$ is clearly Borel measurable, the inverse image of an interval being an interval. Thus, the formula
\begin{align} \label{defmux}
\mu_x(\Sigma) = |\phi_x^{-1}(\Sigma)|
\end{align}
is easily checked to define a Borel probability measure on $\R/\Z$. Theorem \ref{clas3} shows that we can interpret $\mu_x(\Sigma)$ as the ``probability'' that the recurrent Birkhoff configuration $x^{\pm}(s)\in \Gamma(x)$ takes a value in $\Sigma+\Z$. The following not so surprising result states that two such probability measures $\mu_x$ and $\mu_y$ are equal if and only if $x$ and $y$ generate the same minimal set:

\begin{theorem}\label{cross2}
Let $x$ and $y$ be two Birkhoff configurations with rotation vector $\omega\in\R^d\backslash\Q^d$. Then the following are equivalent:
\begin{itemize}
\item[{\it 1)}] $\mu_x=\mu_y$. 
\item[{\it 2)}] There is an $s\in\R$ for which $\phi_x^-(\cdot) = \phi_y^-(\cdot + s)$.
\item[{\it 3)}] There is an $s\in\R$ for which $\phi_x^+(\cdot) = \phi_y^+(\cdot + s)$.
\end{itemize}
\end{theorem}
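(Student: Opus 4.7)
The equivalence $(2) \Leftrightarrow (3)$ and the implication $(2) \Rightarrow (1)$ are both essentially immediate. For $(2) \Leftrightarrow (3)$ I would simply invoke part (7) of Proposition \ref{propertieshull}: since $\phi_x^+$ is the right-continuous regularization of $\phi_x^-$ (and conversely), a horizontal translation relating the $\phi^-$'s automatically relates the $\phi^+$'s. For $(2) \Rightarrow (1)$: if $\phi_x^-(\cdot) = \phi_y^-(\cdot + s)$, then $(\phi_x^-)^{-1}(\Sigma) = (\phi_y^-)^{-1}(\Sigma) - s$ in $\R/\Z$, so translation-invariance of Lebesgue measure yields $\mu_x(\Sigma) = \mu_y(\Sigma)$ for every interval $\Sigma$, hence for every Borel set by (\ref{defmux}).

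The nontrivial direction is $(1) \Rightarrow (2)$, where the idea is that $\mu_x$ determines $\phi_x^-$ up to a horizontal shift. I would introduce the right-continuous pseudo-inverse
$$\psi_x(t) := \inf\{s \in \R : \phi_x^-(s) > t\},$$
which is nondecreasing, right-continuous, and satisfies $\psi_x(t+1) = \psi_x(t) + 1$. The key computation is that for $t_0 \leq t$,
$$\psi_x(t) - \psi_x(t_0) = \bigl|\{s : t_0 < \phi_x^-(s) \leq t\}\bigr| = \mu_x\bigl((t_0, t]\bigr),$$
where $\mu_x$ is lifted to a $\Z$-periodic Borel measure on $\R$. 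Thus $\mu_x = \mu_y$ forces $\psi_x - \psi_y$ to be a constant $-s$. Applying the dual pseudo-inversion $\phi_x^-(s) = \sup\{t : \psi_x(t) < s\}$ to both sides then recovers $\phi_x^-(\cdot) = \phi_y^-(\cdot + s)$, which is (2).

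The main obstacle is bookkeeping with discontinuities. Jumps of $\phi_x^-$ correspond to intervals on which $\mu_x$ vanishes, while plateaus of $\phi_x^-$ correspond to atoms of $\mu_x$; passing to $\psi_x$ interchanges the two, so one must fix the left/right-continuous conventions carefully to ensure that the correspondence $\phi_x^- \leftrightarrow \psi_x$ is a bijection on the appropriate class of monotone functions satisfying $\phi(s+1)=\phi(s)+1$. Once this has been made precise, the inversion step $\psi_x - \psi_y \equiv -s \Rightarrow \phi_x^-(\cdot) = \phi_y^-(\cdot + s)$ becomes a routine manipulation of distribution functions, and the theorem follows.
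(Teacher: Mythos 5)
Your plan is correct, and the direction $(1)\Rightarrow(2)$ takes a genuinely different route from the paper's. The paper argues by contrapositive, reusing the crossing argument from the proof of Theorem \ref{clas2}: if $\phi_x^-$ is not a translate of $\phi_y^-$, one finds $s_1<s_2$ and $s$ with $\phi_x^-(s_1)<\phi_y^-(s_1+s)$ and $\phi_x^-(s_2)>\phi_y^-(s_2+s)$, then uses left-continuity to exhibit an interval $[\phi_y^-(s_1+s),\phi_y^-(s_2+s)]$ whose $\phi_x^-$-preimage and $\phi_y^-$-preimage have strictly different Lebesgue measures, so $\mu_x\neq\mu_y$. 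You instead reconstruct $\phi_x^-$ from $\mu_x$ directly via the right-continuous pseudo-inverse $\psi_x$: the identity $\psi_x(t)-\psi_x(t_0)=\mu_x((t_0,t])$ is exactly right, and with your conventions ($\phi_x^-$ left-continuous, $\psi_x$ right-continuous) the inversion $\phi_x^-(s)=\sup\{t:\psi_x(t)<s\}$ does hold (one checks $\sup\{t:\psi_x(t)<s\}=\sup_{s'<s}\phi_x^-(s')=\phi_x^-(s)$ by left-continuity), so $\mu_x=\mu_y$ forces $\psi_x=\psi_y-s$ and hence $\phi_x^-(\cdot)=\phi_y^-(\cdot+s)$. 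The paper's argument is shorter because it piggybacks on Theorem \ref{clas2}; yours is more self-contained and makes transparent exactly how much of $\phi_x^-$ the measure determines (everything up to horizontal shift, which is the whole point). One small remark: the paper's written proof does not explicitly address $(2)\Leftrightarrow(3)$, whereas you do handle it (correctly, via Proposition \ref{propertieshull}(7)), so your plan is if anything slightly more complete.
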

\begin{proof} 
We prove that {\it 1)} is equivalent to {\it 2)}. Indeed, if $\phi_x^{-}(\cdot) = \phi_y^{-}(\cdot+s)$, then $(\phi_x^{-})^{-1}(\Sigma)=(\phi_y^{-})^{-1}(\Sigma) - s$ and hence $|(\phi_x^-)^{-1}(\Sigma)|=|(\phi_y^-)^{-1}(\Sigma)|$. 
 Thus $\mu_x=\mu_y$.

In the other direction, let us assume that $\phi_x^-\neq \phi_y^-(\cdot+s)$ for any $s$. As was shown in the proof of Theorem \ref{clas2}, this implies that there are $s_1<s_2$ and an $s$ so that $\phi_x^-(s_1)<\phi_y^-(s_1+s)$ and $\phi_x^-(s_2)>\phi_y^-(s_2+s)$. By left-continuity of $\phi_x^-$ at $s_2$, there thus exists a $s_1<s_3<s_2$ such that $\phi_x^-(s_3)>\phi_y^-(s_2+s)$. This means that $(\phi_x^-)^{-1}[\phi_y^-(s_1+s), \phi_y^-(s_2+s)]\subset[s_1, s_3]$ and $(\phi_y^-)^{-1}[\phi_y^-(s_1+s), \phi_y^-(s_2+s)]=[s_1+s, s_2+s]$. Hence these intervals have a different Lebesgue measure and $\mu_x\neq \mu_y$.
\end{proof}
We remark that formula (\ref{defmux}) implies for any measurable $\Sigma \subset \R/\Z$ that
$$\int_{\R/\Z}{\bf 1}_{\Sigma}d\mu_{x} = \mu_x(\Sigma) = |\phi_x^{-1}(\Sigma)| = \int_{\R/\Z} {\bf 1}_{\phi_x^{-1}(\Sigma)}(s)ds =  \int_{\R/\Z} {\bf 1}_{\Sigma}(\phi_x(s))ds\, .$$
By the usual construction of the Lebesgue integral we therefore obtain that any Lebesgue measurable function $f:\R/\Z\to\R$ is also $\mu_{x}$-measurable and that
\begin{align}\label{changeofvariables}
\int_{\R/\Z}fd\mu_{x} = \int_{\R/\Z} f(\phi_x(s))ds\, .
\end{align}
This observation reveals that $\mu_x$ can be viewed as a Radon probability measure. We recall that a Radon probability measure on $\R/\Z$ is a positive continuous linear functional 
$$\mu: C^0(\R/\Z, \R) \to \R$$ 
on the space of continuous functions on $\R/\Z$ endowed with uniform convergence, and with the property that $\mu(1)=1$. The space of Radon probability measures on $\R/\Z$ will be denoted $M(\R/\Z)$ and it is clear that 
$$\mu_x(f) := \int_{\R/\Z} f d\mu_x = \int_{\R/\Z} \! f(\phi_x(s))ds $$ 
defines a Radon probability measure. We will endow the space $M(\R/\Z)$ with the topology induced by the following weak form of convergence:
\begin{definition}
Let $\mu_1, \mu_2, \ldots, \mu\in M(\R/\Z)$ be Radon probability measures on $\R/\Z$. We say that the $\mu_n$ converge {\it vaguely} to $\mu$ if for every continuous function $f:\R/\Z\to\R$ it holds that
$$\lim_{n\to \infty} \int_{\R/\Z}\!\! f \, d \mu_n =  \int_{\R/\Z}\!\! f \, d\mu\, .$$
\end{definition}
The above equality simply means that the functionals $\mu_n$ converge pointwise to $\mu$. One also says that they weak-star converge. The topology defined by vague convergence is called the {\it vague topology} on $M(\R/\Z)$. By definition, the vague topology makes $M(\R/\Z)$ a topological Hausdorff space, i.e. vague limits are unique. More on the use of the vague topology in Aubry-Mather theory can be found in \cite{moredenjoy}. 
The final result of this section provides some more intuition for the vague topology. It relates the convergence of hull functions to vague convergence: 

\begin{theorem}\label{vaguetheorem}
Let $\phi_1, \phi_2, \ldots$ be hull functions and let $\mu_{1}, \mu_{2},\ldots$ denote the Radon probability measures they induce on $\R/\Z$. When $\lim_{n\to\infty} \phi_n=\phi$ almost everywhere, then $\lim_{n\to\infty}\mu_{n} = \mu$ vaguely.
\end{theorem}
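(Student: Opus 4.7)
The strategy is to express both sides via the change-of-variables formula \eqref{changeofvariables} and then invoke the dominated convergence theorem. Concretely, for any continuous $f:\R/\Z\to\R$, the definition of $\mu_n$ and $\mu$ gives
\[
\int_{\R/\Z} f\,d\mu_n = \int_{\R/\Z} f(\phi_n(s))\,ds \quad \text{and} \quad \int_{\R/\Z} f\,d\mu = \int_{\R/\Z} f(\phi(s))\,ds,
\]
so the statement reduces to showing $\int f(\phi_n(s))\,ds \to \int f(\phi(s))\,ds$.

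First I would verify the integrands converge pointwise almost everywhere: let $E\subset \R/\Z$ be the full-measure set on which $\phi_n(s)\to \phi(s)$. Viewing $f$ as a $1$-periodic continuous function on $\R$, continuity of $f$ at $\phi(s)$ guarantees $f(\phi_n(s))\to f(\phi(s))$ for every $s\in E$. Second, since $\R/\Z$ is compact, $M:=\|f\|_\infty<\infty$, so $|f(\phi_n(s))|\le M$ uniformly in $n$ and $s$; the constant $M$ is an integrable dominating function on $\R/\Z$. Measurability of the compositions $f\circ\phi_n$ is automatic, as each hull function is monotone and therefore Borel measurable. Dominated convergence then yields
\[
\lim_{n\to\infty}\int_{\R/\Z} f(\phi_n(s))\,ds = \int_{\R/\Z} f(\phi(s))\,ds,
\]
which is exactly the vague convergence $\mu_n\to \mu$.

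There is no real obstacle here: the only mildly delicate point is to make sure that the ``almost everywhere'' hypothesis is strong enough to feed into dominated convergence. It is, precisely because $f$ is continuous and bounded on the compact target $\R/\Z$. In particular, Example \ref{ex1} fits this framework: the approximating sequence of simplex points $p^n\to p$ produces hull functions $\phi_{p^n}^-\to \phi_p^-$ almost everywhere, and the theorem then gives vague convergence of the corresponding measures, as needed for the statistical interpretation in Theorem \ref{mainthm}.
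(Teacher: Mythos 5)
Your argument is the same as the paper's: rewrite $\int f\,d\mu_n$ via the change-of-variables identity \eqref{changeofvariables} as $\int f(\phi_n(s))\,ds$, observe that $f\circ\phi_n\to f\circ\phi$ almost everywhere by continuity of $f$, and conclude by dominated convergence using the uniform bound $\|f\|_\infty$. Correct and matches the paper.
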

\begin{proof}
When $\lim_{n\to\infty} \phi_{n} = \phi$ almost everywhere and $f:\R/\Z\to\R$ is continuous, then also $\lim_{n\to \infty} f \circ \phi_{x^n} = f \circ \phi_x$ almost everywhere. Because $|(f\circ \phi_n)(s)|\leq \sup_{\sigma\in \R/\Z}|f(\sigma)|$ is uniformly bounded both in $n$ and in $s$, the Dominated Convergence Theorem gives that 
$$\int_{\R/\Z} f d\mu_{n}= \int_{\R/\Z} f(\phi_{n}(s))ds \to \int_{\R/\Z} f(\phi(s))ds = \int_{\R/\Z} f d\mu\, .$$ 
This proves that $\lim_{n\to\infty}\mu_{n} = \mu$ vaguely. 
\end{proof}

\begin{example}\label{ex2}
Let $0\leq \sigma_1< \ldots < \sigma_N<1$ and let $p_1, \ldots, p_N\geq 0$ be so that $p_1+\ldots +p_N=1$. As explained in Example \ref{ex1}, there is exactly one left-continuous hull function $\phi^-_p: \R \to \R$ with
$$\lim_{s\nearrow 0}\phi^-_p(s)\leq 0\, ,\ \lim_{s\searrow 0}\phi_p^-(s)> 0\ \mbox{and}\ |(\phi_p^-)^{-1}(\sigma_j)|=p_j\ \mbox{for all}\ 1\leq j\leq N\, .$$  
The Radon probability measure corresponding to this hull function is simply given by
$$\mu_p(f):=\int_{\R/\Z} f (\phi_p^-(s))ds  =\sum_{j=1}^N p_jf(\sigma_j)\, .$$
This implies in particular that if $p^1, p^2, \ldots, p\in \Delta_{N-1}$ and $\lim_{n\to\infty} p^{n} = p$, then
$$\lim_{n\to \infty} \mu_{p^n}(f)=\lim_{n\to \infty} \sum_{j=1}^N p_j^{n}f(\sigma_j) = \sum_{j=1}^N p_jf(\sigma_j) = \mu_p(f)\, .$$
Hence $\lim_{n\to\infty} \mu_{p^n} = \mu_p$ vaguely. In other words, the map 
$$\Psi: \Delta_{N-1}\to M(\R/\Z)\ \mbox{given by}\ p\mapsto \mu_p$$
is continuous. Because $\Psi$ is clearly injective, $\Delta_{N-1}$ is compact and $M(\R/\Z)$ is Hausdorff, $\Psi$ is actually a homeomorphism onto its image.
\end{example}

\section{Birkhoff solutions near the anti-continuum limit}\label{propertiessection}
The goal of this section and the next is to obtain a complete description of the recurrent Birkhoff solutions to equations (\ref{FKepsilon2}) near the anti-integrable limit. We start with the following simple observation:  

\begin{proposition}\label{Cantorcontinuation}
Let $x$ be a Birkhoff solution to (\ref{antilimit}) of rotation vector $\omega\in \R^d\backslash \Q^d$, let $\varepsilon_0>0$ be as in Theorem \ref{continuation} and let $0\leq \varepsilon\leq\varepsilon_0$. Then 
$$\Gamma_{\varepsilon}(x) := \{ X(\varepsilon)\, |\, X\in \Gamma(x)\, \} \subset\R^{\Z^d}\ \mbox{is a Cantor set and homeomorphic to}\ \Gamma(x) .$$
\end{proposition}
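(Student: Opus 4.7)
The strategy is a straightforward application of Theorem \ref{continuous} to the set $\Gamma(x)$ itself, followed by identifying the topological type of $\Gamma(x)$ using the special structure of the anti-continuum limit. First I would verify the hypotheses of Theorem \ref{continuous} for $\Gamma=\Gamma(x)$. Being a solution of (\ref{antilimit}) is preserved under the translations $\tau_{k,l}$, because $V$ is one-periodic so $V'((\tau_{k,l}X)_i)=V'(X_{i+k}+l)=V'(X_{i+k})$, and it is preserved under pointwise limits by continuity of $V'$; since by Theorem \ref{xplusminuslemma} every element of $\Gamma(x)$ arises precisely as such translates and pointwise limits of $x$, every $X\in\Gamma(x)$ solves (\ref{antilimit}). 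Moreover $\Gamma(x)$ is ordered and translation-invariant, so each $X\in\Gamma(x)$ is itself a Birkhoff configuration, and using property {\it 4)} of Proposition \ref{propertieshull} together with the representation $X_i=\phi_x^{\pm}(s+\omega\cdot i)$ one checks that each $X\in\Gamma(x)$ has the same rotation vector $\omega$ as $x$. The bounded-oscillation lemma for Birkhoff configurations then yields the uniform bound ${\rm osc}(X)\leq r\|\omega\|+2$ for all $X\in\Gamma(x)$.

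With these hypotheses in hand, Theorem \ref{continuous} immediately gives that
\[
\Phi_\varepsilon:\Gamma(x)\to\Gamma_\varepsilon(x),\qquad X\mapsto X(\varepsilon),
\]
is a homeomorphism in the topology of pointwise convergence for every $0\leq\varepsilon\leq\varepsilon_0$. In particular $\Gamma_\varepsilon(x)$ is homeomorphic to $\Gamma(x)$, so it suffices to show that $\Gamma(x)$ itself is a Cantor set.

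By Theorem \ref{Cantortheorem}, $\Gamma(x)$ is either connected or Cantor, so I only need to rule out connectedness. Here I would invoke the special nature of the anti-continuum limit: since $V$ is a one-periodic Morse function, its critical set $C:=\{c\in\R\ | \ V'(c)=0\}$ is a closed discrete subset of $\R$ (finitely many points per period, shifted by $\Z$). Every $X\in\Gamma(x)$ solves (\ref{antilimit}), so each coordinate $X_i$ lies in $C$, and hence $\Gamma(x)\subset C^{\Z^d}$. Since $C$ carries the discrete topology, the product $C^{\Z^d}$ is totally disconnected in the product topology, and so is $\Gamma(x)$. On the other hand, Proposition \ref{numbertheory} forces the translates $\tau_{k,l}x$ to be pairwise distinct when $\omega$ is irrational, so $\Gamma(x)$ is infinite and cannot be a single point. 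Therefore $\Gamma(x)$ cannot be connected, and by Theorem \ref{Cantortheorem} it must be a Cantor set; via $\Phi_\varepsilon$, so is $\Gamma_\varepsilon(x)$. The only mildly delicate point is this last topological identification of $\Gamma(x)$ via total disconnectedness of the critical locus of $V$; the remainder is a packaging of the continuation framework already developed in Theorems \ref{continuation} and \ref{continuous}.
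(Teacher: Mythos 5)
Your proof is correct and follows essentially the same route as the paper: invoke Theorem~\ref{continuous} to get the homeomorphism $\Gamma(x)\cong\Gamma_\varepsilon(x)$, rule out connectedness of $\Gamma(x)$ because its elements take values in the discrete critical set of $V$, and conclude via the dichotomy of Theorem~\ref{Cantortheorem}. Your phrasing of the disconnectedness step via total disconnectedness of $C^{\Z^d}$ in the product topology (together with the explicit check that $\Gamma(x)$ satisfies the hypotheses of Theorem~\ref{continuous}) is somewhat more careful than the paper's, which merely notes that distinct elements are uniformly $\gamma$-separated in the sup norm, but the underlying observation is identical.
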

\begin{proof}
Recall that $\Gamma_{\varepsilon}(x)$ is homeomorphic to $\Gamma(x)$ by Theorem \ref{continuous}, so it suffices to prove that $\Gamma(x)$ is a Cantor set. But Theorem \ref{Cantortheorem} applies. Clearly, $\Gamma(x)$ is not topologically connected: it holds that $||X^1-X^2||_{\infty}\geq \gamma$ for some $\gamma>0$ and all $X^1\neq X^2$ in $\Gamma(x)$, because $X^1$ and $X^2$ only take values in a discrete set. Thus, $\Gamma(x)$ is a Cantor set. 
\end{proof}
Proposition \ref{Cantorcontinuation} implies that $\Gamma_{\varepsilon}(x)$ is not a foliation, but of course it does not say that $\Gamma_{\varepsilon}(x)$ is a lamination: it is so if and only if it is ordered. In turn, this is the case if and only if the elements of $\Gamma_{\varepsilon}(x)$ are Birkhoff and then $\Gamma_{\varepsilon}(x)=\Gamma(X(\varepsilon))$ for any element $X\in \Gamma(x)$. Thus, we wonder under what conditions the continuation $x(\varepsilon)$ of a recurrent Birkhoff solution $x$ to (\ref{antilimit}) is still Birkhoff. The answer is given by Theorems \ref{corollaryordering} and \ref{conversethm} below. 

\begin{theorem} \label{corollaryordering}
Let $\omega\in\R^d\backslash \Q^d$, let $K>0$ be so that ${\rm osc}\, (X)\leq K$ for all Birkhoff configurations $X$ of rotation vector $\omega$ and let $\delta_0, \varepsilon_0>0$ be as in Theorem \ref{continuation}. Then there is an $0<\varepsilon_1\leq \varepsilon_0$ so that for all $0\leq \varepsilon\leq \varepsilon_1$ the following is true: 

When $x:\Z^d\to\R$ is a recurrent Birkhoff solution to (\ref{antilimit}) of rotation vector $\omega$ and every $x_i$ is a local minimum of $V$, then $x(\varepsilon)$ is Birkhoff.
\end{theorem}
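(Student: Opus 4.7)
The plan is to reduce the theorem to the following comparison lemma, established uniformly: \emph{if $X \leq Y$ are two Birkhoff solutions of (\ref{antilimit}) of rotation vector $\omega$ with all components at local minima of $V$, then $X(\varepsilon) \leq Y(\varepsilon)$ for every $0 \le \varepsilon \le \varepsilon_1$}. Granting the lemma, the translation-invariance of (\ref{FKepsilon2}) together with the uniqueness in Theorem \ref{continuation} gives $(\tau_{k,l} x)(\varepsilon) = \tau_{k,l}(x(\varepsilon))$, so that $x(\varepsilon)$ being Birkhoff amounts to showing that for every pair $X = \tau_{k_1,l_1} x$, $Y = \tau_{k_2,l_2} x$ the continuations $X(\varepsilon)$ and $Y(\varepsilon)$ are ordered. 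The Birkhoff property of $x$ orders $X$ and $Y$; both then satisfy the hypotheses of the lemma (translates of local minima are local minima by one-periodicity of $V$), which supplies the conclusion.

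For the setup I fix constants as follows. Let $\kappa > 0$ be the smallest gap between distinct elements of the discrete set $\{\sigma_j + n : 1 \le j \le N,\ n \in \mathbb{Z}\}$ of local minima of $V$, and let $c > 0$ satisfy $V''(\sigma_j) \ge 2c$ for every $j$ (possible since $V$ is Morse). Shrink $\delta_0$ and $\varepsilon_0$ from Theorem \ref{continuation} so that $2\delta_0 < \kappa$ and $V''(y) \ge c$ whenever $|y - \sigma_j| \le \delta_0$ for some $j$. Under these choices, at any index $i$ with $X_i < Y_i$ the two values differ by at least $\kappa$, so that $Y(\varepsilon)_i - X(\varepsilon)_i \ge \kappa - 2\delta_0 > 0$; hence any violation of the comparison lemma must occur at an index where $X_i = Y_i$, and at such an index both $X(\varepsilon)_i$ and $Y(\varepsilon)_i$ lie in $(\sigma_j - \delta_0, \sigma_j + \delta_0)$ for a common $j$.

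The lemma is proved by a discrete-maximum-principle bootstrap on $D_i := Y(\varepsilon)_i - X(\varepsilon)_i$. Assume for contradiction that $m := \inf_i D_i < 0$ and pick $(i_n)$ with $D_{i_n} \to m$. By the previous paragraph, eventually $X_{i_n} = Y_{i_n} = \sigma_{j_n} + l_n$ with $j_n \in \{1,\ldots,N\}$ and $l_n \in \mathbb{Z}$; a pigeonhole subsequence makes $j_n = j$ constant. Translating by $\tau_{i_n, -l_n}$ and applying a diagonal extraction (bounded oscillation of $X(\varepsilon)$ and $Y(\varepsilon)$ gives pointwise precompactness), I obtain pointwise limits $\tilde X, \tilde Y$; by the closedness of the solution set of (\ref{FKepsilon2}) under pointwise convergence (a consequence of condition {\bf A}), these are themselves solutions, and $\tilde D := \tilde Y - \tilde X$ satisfies $\tilde D_0 = m$, $\tilde D_i \ge m$ for every $i$, and $\tilde X_0, \tilde Y_0 \in [\sigma_j - \delta_0, \sigma_j + \delta_0]$.

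At the minimum-achieving index $0$, I subtract the two equations for $\tilde X$ and $\tilde Y$ and apply the mean value theorem to $V'$ and to each $\partial_i S_j$; this produces
\[
\bigl(V''(\xi_0) + \varepsilon A_{00}\bigr)\, m + \varepsilon \sum_{k \neq 0} A_{0k}\, \tilde D_k = 0,
\]
with $\xi_0 \in [\sigma_j - \delta_0, \sigma_j + \delta_0]$, $A_{0k} \le 0$ for $k \neq 0$ (by condition {\bf C}), and $\sum_k |A_{0k}| \le C$ uniformly bounded (by conditions {\bf A} and {\bf B} together with the finite range). Using $\tilde D_k \ge m$ to bound the sum below by $\varepsilon m \sum_{k \neq 0} |A_{0k}|$, then dividing by $m < 0$, yields $V''(\xi_0) \le \varepsilon C'$ for a uniform $C'$; but $V''(\xi_0) \ge c$, so taking $\varepsilon_1 := \min\{\varepsilon_0,\, c/(2C')\}$ delivers the desired contradiction. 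The main obstacle is precisely the possibility that the continuations cross at a \emph{touching} index where $X_i = Y_i$; it is overcome by combining the positivity of $V''$ at local minima (here the hypothesis that all $x_i$ are minima is essential) with the ferromagnetic sign structure in condition {\bf C}, and by using translation invariance plus bounded oscillation to realize the infimum of $D$ on $\mathbb{Z}^d$ as an attained minimum of a limiting pair of solutions.
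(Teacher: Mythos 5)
Your proof is correct, and it takes a genuinely different and more elementary route than the paper's. The paper argues variationally: it introduces the local defect $D^\varepsilon_B$ measuring failure of local minimality (Lemma \ref{confinedcontinuation}), proves subadditivity in $B$ (Lemma \ref{subadditive}), shows by a compactness argument that a failure of ordering forces this defect to be bounded away from zero on every ball of some fixed radius (Lemma \ref{uniform_aubry}), and then plays off the resulting volume-order defect against Aubry's minimum-maximum property (Lemma \ref{Aubry}) and a boundary-term estimate to contradict local minimality of $x(\varepsilon)$ and $y(\varepsilon)$. You instead run a sliding-method comparison argument directly on $D_i := Y(\varepsilon)_i - X(\varepsilon)_i$: the uniform separation of the local minima of $V$ rules out a negative infimum of $D$ at indices where $X_i < Y_i$; at touching indices you translate to approach the infimum, pass to a pointwise limit using closedness of the solution set under pointwise convergence (condition {\bf A}) and precompactness from bounded oscillation, and then difference the recurrence relations at the resulting interior minimum. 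The signs $\p_{i,k}S_j \le 0$ ($i \ne k$) from condition {\bf C}, together with $V'' \ge c > 0$ on $\delta_0$-neighbourhoods of local minima and $\tilde D_k \ge m = \tilde D_0 < 0$, force $V''(\xi_0) \le \varepsilon C_2$, contradicting $V''(\xi_0) \ge c$ for $\varepsilon < c/C_2$; this works verbatim in every dimension and any finite range, so it sidesteps the crossing-lemma difficulty the authors flag in the introduction. What each buys: your proof needs only the recurrence, the twist condition and compactness, is shorter, and is structurally the mirror image of the paper's own proof of the converse Theorem \ref{conversethm} (where $V''\le -c$ produces the opposite conclusion); the paper's route, following Bangert, builds along the way a quantitative local crossing estimate ($D^\varepsilon_B$, Lemma \ref{uniform_aubry}) that is reusable as a tool and does not require realizing the infimum of the pointwise difference.
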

It turns out that one can choose $\varepsilon_1:=\min\{2c/C_2,\varepsilon_0\}$, with $c$ and $C_2$ as in the proof of Theorem \ref{continuation}.
The proof of Theorem \ref{corollaryordering} is somewhat lengthy and is based on the fact that the solutions $x$ and $x(\varepsilon)$ described in Theorem \ref{corollaryordering} are ``local action minimizers''. We postpone this proof to Section \ref{minimizersection}. It heavily relies on ideas from \cite{bangert87}. 

The converse of Theorem \ref{corollaryordering} is the following result, that is easier to prove. 
\begin{theorem}\label{conversethm}
Let $\omega\in\R^d\backslash \Q^d$, let $K>0$ be so that ${\rm osc}\, (X)\leq K$ for all Birkhoff configurations $X$ of rotation vector $\omega$ and let $\delta_0, \varepsilon_0>0$ be as in Theorem \ref{continuation}. Then there is an $0<\varepsilon_1\leq \varepsilon_0$ so that for all $0\leq \varepsilon\leq \varepsilon_1$ the following is true: 

When $x:\Z^d\to \R$ is a recurrent Birkhoff solution to (\ref{antilimit}) of rotation vector $\omega$ and if there is an $i\in \Z^d$ for which $x_i$ is a local maximum of $V$, then $x(\varepsilon)$ is not Birkhoff.
\end{theorem}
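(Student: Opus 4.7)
The plan is to suppose for contradiction that $x(\varepsilon)$ is Birkhoff for some small $\varepsilon > 0$ and to extract a sign contradiction by linearising the recurrence along translates of $x$ that approach $x$. The decisive feature is that the concavity $V''(\sigma) < 0$ at the local maximum $\sigma = x_i$ enters the linearisation with a sign incompatible with the Birkhoff ordering.

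First I would use the recurrence of $x$ (Definition \ref{recurrentdef}) to produce a sequence $(k_n, l_n) \in \Z^d \times \Z$ with $\tau_{k_n, l_n}x \to x$ pointwise. Passing to a subsequence, I arrange $\omega \cdot k_n + l_n > 0$, so that $y_n := \tau_{k_n, l_n}x$ satisfies $y_n > x$ by Proposition \ref{numbertheory}. Condition B together with the uniqueness clause of Theorem \ref{continuation} yields $y_n(\varepsilon) = \tau_{k_n, l_n}(x(\varepsilon))$. Assuming now that $x(\varepsilon)$ is Birkhoff (with rotation vector $\omega$, the same as $x$ because $\delta_0 < 1/2$), Proposition \ref{numbertheory} gives $y_n(\varepsilon) \geq x(\varepsilon)$, and Lemma \ref{maximumprinciple} upgrades this to $\xi_n := y_n(\varepsilon) - x(\varepsilon) \gg 0$. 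By Theorem \ref{continuous}, $\xi_n(j) \to 0$ pointwise as $n \to \infty$.

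Next I would subtract the equations (\ref{FKepsilon2}) satisfied by $y_n(\varepsilon)$ and $x(\varepsilon)$ and apply the mean value theorem componentwise. For every $i \in \Z^d$ this produces
\[
\bigl( V''(z_n^i) + \varepsilon\, c_n^{i,i} \bigr)\, \xi_n(i) \;=\; -\varepsilon \sum_{m \neq i} c_n^{i,m}\, \xi_n(m),
\]
where $z_n^i$ lies between $x(\varepsilon)_i$ and $y_n(\varepsilon)_i$ and $c_n^{i,m} := \int_0^1 \partial_m R_i\bigl(x(\varepsilon) + t\xi_n\bigr)\, dt$. Specialising to an index $i$ with $x_i = \sigma$ a local maximum, condition C gives $c_n^{i,m} \leq 0$ for all $m \neq i$, with $c_n^{i,m} < 0$ strictly whenever $\|m - i\| = 1$. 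Combined with $\xi_n(m) > 0$ for every $m$, the right-hand side is strictly positive, forcing $V''(z_n^i) + \varepsilon\, c_n^{i,i} > 0$.

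The final step closes the contradiction by controlling the left-hand coefficient. Since $V$ is a Morse function, there is a uniform $c > 0$ with $V''(\sigma) \leq -c$ at every local maximum. The choice of $\delta_0$ made in the proof of Theorem \ref{continuation} ensures $|V''(Z) - V''(\sigma)| \leq c/2$ for $Z \in (\sigma - \delta_0, \sigma + \delta_0)$; since the solutions of (\ref{antilimit}) take values in a discrete set, $y_{n,i} = \sigma$ for large $n$, so the intermediate point $z_n^i$ lies in this interval and $V''(z_n^i) \leq -c/2$. The Lipschitz estimate on $R$ from the proof of Theorem \ref{continuation} furnishes $|c_n^{i,i}| \leq C_2$, whence $V''(z_n^i) + \varepsilon\, c_n^{i,i} \leq -c/2 + \varepsilon C_2 < 0$ as soon as $\varepsilon < c/(2C_2)$, contradicting the previous inequality. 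One may then take $\varepsilon_1 := \min\{\varepsilon_0, c/(2C_2)\}$. The main obstacle in this plan is not conceptual but notational: one must carefully invoke the comparison principle in the correct direction (which requires knowing the rotation vector of $x(\varepsilon)$ is still $\omega$) and use the strict part of condition C to secure the strict positivity of the right-hand side; once these points are in place, the sign clash between the two sides of the linearised equation is immediate.
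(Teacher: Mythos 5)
Your argument is correct and follows the same underlying strategy as the paper's: pick a translate $y=\tau_{k,l}x>x$ with $y_i=x_i$ (you obtain this from recurrence by taking $n$ large in the sequence $y_n\to x$ and using discreteness of critical points), form the difference of the equations at the site $i$, and extract a sign contradiction from the twist condition and the concavity of $V$ at the local maximum. The only structural difference is where the comparison principle is invoked: the paper first derives $y(\varepsilon)_i=x(\varepsilon)_i$ from the estimate, then applies Lemma~\ref{maximumprinciple} to conclude $y(\varepsilon)=x(\varepsilon)$ and hence $y=x$ by uniqueness, which is a contradiction; you instead apply the comparison principle upfront to get $\xi_n\gg 0$ and then read off the incompatible signs of the two sides of the linearised equation directly. (A small remark: once you have $\xi_n\gg 0$, the strict part of condition~{\bf C} is not actually needed — the nonnegative right-hand side together with $\xi_n(i)>0$ already forces the coefficient to be $\geq 0$, which contradicts the bound $\leq -c/2+\varepsilon C_2<0$.) Both versions yield the same threshold $\varepsilon_1$ up to harmless constants.
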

\begin{proof}
Let $x$ be as described in the statement of the theorem. Because $x$ is recurrent and only takes discrete values, there is a translate $y=\tau_{k,l}x\neq x$ such that $x_i=y_i$.  It then holds that $x(\varepsilon)_i, y(\varepsilon)_i\in (x_i-\delta_0, x_i+\delta_0) = (y_i-\delta_0, y_i+\delta_0)$, where we may assume that $V''\leq-c<0$ on $(x_i-\delta_0, x_i+\delta_0)$. If we also assume, for example, that $y>x$, then there is a $j$ so that $y_j>x_j$ and hence also $y(\varepsilon)_j>x(\varepsilon)_j$. 

We will now argue by contradiction. So let us assume that $y(\varepsilon)_k\geq x(\varepsilon)_k$ for all $k\in \Z^d$. Then we find by interpolation that 
\begin{align}\nonumber
0  & = V'(y(\varepsilon)_i) - V'(x(\varepsilon)_i)  + \varepsilon R_i(y(\varepsilon)) - \varepsilon  R_i(x(\varepsilon)) \\ \nonumber  & = \left( \int_0^1V''(t y(\varepsilon)_i+(1-t) x(\varepsilon)_i)dt \right)  (y(\varepsilon)_i  -  x(\varepsilon)_i)  \\ \nonumber & + \varepsilon \!\!\!\!\!\! \sum_{{\tiny \begin{array}{c} ||j-i||\leq r\\ ||k-j||\leq r \end{array}} }\!\!\!\!\!\!  \left( \int_0^1 \p_{i,k}S_j(t y(\varepsilon)_i+(1-t) x(\varepsilon)_i) dt\right)  ( y(\varepsilon)_k - x(\varepsilon)_k)  \nonumber \\ 
 & \leq \left( -c + \varepsilon  C_2/(2r+1)^d \right)  (y(\varepsilon)_i -  x(\varepsilon)_i ) \, .
\nonumber
\end{align}
Here, in the inequality we have used that $V''\leq -c$, that $\p_{i,k}S_j\leq 0$ for all $i\neq k$, that $y(\varepsilon)_k-x(\varepsilon)_k\geq 0$ and that $|\p_{i,i}S_j|\leq C_2/(2r+1)^{2d}$.

This estimate implies that if we choose $0<\varepsilon_1'< \min\{c(2r+1)^d/C_2, \varepsilon_0\}$ and $0\leq \varepsilon\leq \varepsilon_1$, then $x(\varepsilon)_i \geq y(\varepsilon)_i$. But we assumed that $x(\varepsilon)_k\leq y(\varepsilon)_k$ for all $k\in \Z^d$, and therefore it must hold that $x(\varepsilon)_i = y(\varepsilon)_i$. By the comparison principle of Lemma \ref{maximumprinciple} that holds for $\varepsilon>0$, this implies that $x(\varepsilon)=y(\varepsilon)$. But then also $x=y$ by uniqueness, which contradicts our assumptions.
\end{proof}
Note that the $\varepsilon_1$ in the statement of Theorem \ref{conversethm} may not be the same as the $\varepsilon_1$ in the statement of Theorem \ref{corollaryordering}.

We finish this section with the result that there are no Birkhoff solutions to (\ref{FKepsilon2}) close to the anti-continuum limit other than the ones already found in Theorem \ref{continuation}.  Again, the $\varepsilon_1$ of Theorem \ref{birkhoffcontinuation} may be different from the $\varepsilon_1$'s in Theorems \ref{corollaryordering} and \ref{conversethm}. 

\begin{theorem}\label{birkhoffcontinuation}
Let $\omega\in\R^d$, let $K>0$ be so that ${\rm osc}\, (X)\leq K$ for all Birkhoff configurations $X$ of rotation vector $\omega$ and let $\delta_0, \varepsilon_0>0$ be as in Theorem \ref{continuation}. Then there is an $0<\varepsilon_1\leq \varepsilon_0$ so that for all $0\leq \varepsilon\leq \varepsilon_1$ the following is true: 

When $x^{\varepsilon}$ is any Birkhoff solution to (\ref{FKepsilon2}) of rotation vector $\omega$, then $x^{\varepsilon}\in B_{\delta_0}(x)$ for some Birkhoff solution $x$ of (\ref{antilimit}) and hence $x^{\varepsilon}=x(\varepsilon)$. 
\end{theorem}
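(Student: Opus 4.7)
My plan is to show that any Birkhoff solution $x^\varepsilon$ of (\ref{FKepsilon2}) is automatically close to a Birkhoff solution $x$ of the anti-continuum limit (\ref{antilimit}), after which the uniqueness clause of Theorem \ref{continuation} will force $x^\varepsilon = x(\varepsilon)$. The candidate $x$ will be obtained by rounding each component $x^\varepsilon_i$ to the nearest critical point of $V$.

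Setting this up should be straightforward. Since $x^\varepsilon$ is Birkhoff of rotation vector $\omega$, $\mathrm{osc}(x^\varepsilon)\leq K$, and the estimates from the proof of Theorem \ref{continuation} give $|R_i(x^\varepsilon)|\leq C_1$, so (\ref{FKepsilon2}) yields $|V'(x_i^\varepsilon)|\leq \varepsilon C_1$ for every $i$. Because $V$ is a one-periodic Morse function whose critical points have pairwise disjoint open $\delta_0$-neighborhoods (by the remark containing (\ref{disjointintervals})), there is a $\mu>0$ with $|V'(y)|\geq \mu$ outside these neighborhoods. I would choose $\varepsilon_1>0$ small enough that $\varepsilon_1 C_1<\mu$ and $\varepsilon_1\leq \varepsilon_0$, so that for $\varepsilon\leq \varepsilon_1$ each $x_i^\varepsilon$ sits in a unique such neighborhood; this pins down a critical point $x_i$ of $V$ with $|x_i-x_i^\varepsilon|<\delta_0$. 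By construction $x$ solves (\ref{antilimit}) and $x^\varepsilon\in B_{\delta_0}(x)$.

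The main obstacle will be verifying that the configuration $x$ built this way is itself Birkhoff. To do this I would fix $(k,l)\in\Z^d\times\Z$, suppose first that $\tau_{k,l}x^\varepsilon\geq x^\varepsilon$, and chain the proximity estimate twice to get
\[
x_{i+k}+l \;>\; x^\varepsilon_{i+k}+l-\delta_0 \;\geq\; x^\varepsilon_i-\delta_0 \;>\; x_i-2\delta_0.
\]
By one-periodicity of $V$, both $x_{i+k}+l$ and $x_i$ are critical points of $V$, so (\ref{disjointintervals}) dictates that they either coincide or differ in absolute value by at least $2\delta_0$; the displayed chain rules out the alternative $x_{i+k}+l\leq x_i-2\delta_0$, so $x_{i+k}+l\geq x_i$ and $\tau_{k,l}x\geq x$. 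The opposite case $\tau_{k,l}x^\varepsilon\leq x^\varepsilon$ is handled symmetrically, so $x$ is Birkhoff. Since $|x_i-x_i^\varepsilon|<\delta_0<1/2$, $x$ shares the rotation vector $\omega$ with $x^\varepsilon$, whence $\mathrm{osc}(x)\leq K$ by the hypothesis on $K$.

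The conclusion is then immediate: $x$ is a Birkhoff solution of (\ref{antilimit}) satisfying the hypotheses of Theorem \ref{continuation}, which supplies a unique continuation $x(\varepsilon)\in B_{\delta_0}(x)$ solving (\ref{FKepsilon2}); since $x^\varepsilon\in B_{\delta_0}(x)$ also solves (\ref{FKepsilon2}), uniqueness gives $x^\varepsilon=x(\varepsilon)$. The only step that could plausibly fail is the ordering transfer in the previous paragraph, and it succeeds precisely because (\ref{disjointintervals}) supplies a uniform gap of $2\delta_0$ between distinct critical points of $V$, converting the perturbative proximity $\|x-x^\varepsilon\|_\infty<\delta_0$ into a strict inequality at the critical-point level.
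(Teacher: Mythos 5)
Your proposal is correct and follows essentially the same approach as the paper's proof: bound $|V'(x_i^\varepsilon)|\leq\varepsilon C_1$ via the oscillation estimate, shrink $\varepsilon_1$ so each $x_i^\varepsilon$ lies in the $\delta_0$-neighborhood of a unique critical point $x_i$ of $V$, and conclude $x^\varepsilon=x(\varepsilon)$ by the uniqueness clause of Theorem \ref{continuation}. The only difference is that you spell out the ordering-transfer step (using the $2\delta_0$ separation from (\ref{disjointintervals}) to show $\tau_{k,l}x^\varepsilon\geq x^\varepsilon$ forces $\tau_{k,l}x\geq x$), which the paper compresses into the one-line remark that $x$ must be Birkhoff because otherwise $x(\varepsilon)$ could not be; your version of that step is the contrapositive made explicit and is a worthwhile elaboration.
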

\begin{proof}
Because $V$ is Morse, we have that for all $\delta>0$ there is an $\varepsilon>0$ so that if $|V'(X_i)|<\varepsilon$, then $|X_i - x_i| < \delta$ for some critical point $x_i$ of $V$.

Now suppose that $x^{\varepsilon}$ is Birkhoff with rotation vector $\omega$ and solves (\ref{FKepsilon2}). Then ${\rm osc}(x^{\varepsilon})\leq K$ and hence, as in the proof of Theorem \ref{continuation}, there is a constant $C_1$ depending only on $K$ so that
 $$|V'(x^{\varepsilon}_i)|=|\varepsilon R_i(x^{\varepsilon})|\leq \varepsilon \, C_1\ \mbox{uniformly for}\ i\in \Z^d\, .$$ 
Thus, there is an $0<\varepsilon_1\leq \varepsilon_0$ so that if $0\leq \varepsilon\leq \varepsilon_1$, then there are critical points $x_i$ of $V$ such that $|x^{\varepsilon}_i-x_i|< \delta_0$ for all $i$. In other words, for such $\varepsilon$ we have that $x^{\varepsilon}\in B_{\delta_0}(x)$. Because $x(\varepsilon)$ is the unique solution to (\ref{FKepsilon2}) in $B_{\delta_0}(x)$, this implies that $x^{\varepsilon}=x(\varepsilon)$.
 
It is clear that the solution $x$ to (\ref{antilimit}) for which $x^{\varepsilon}=x(\varepsilon)$ must be Birkhoff, because if it is not, $x(\varepsilon)\in B_{\delta_0}(x)$ cannot be Birkhoff either.
\end{proof}

\section{Local action minimizers}\label{minimizersection}
In this section we shall prove Theorem \ref{corollaryordering}. To do this, we recall from Section \ref{setting} that the solutions to (\ref{FKepsilon2}) are precisely the stationary points of the formal action
$$  W^{\varepsilon}(x):=\sum_{j\in \Z^d} S_j^{\varepsilon}(x)\ \mbox{with}\ S_j^{\varepsilon}(x) = V(x_j)+\varepsilon S_j(x)\, .$$
A problem is of course that this formal action in general does not define a convergent sum.
Thus, in order to make this formal variational principle useful, let us make a few definitions. First of all, we define for any finite subset $B\subset \Z^d$ the finite action
$$W_B^{\varepsilon}(x):=\sum_{j\in B} S_j^{\varepsilon}(x)\, .$$ 
It is clear that $W_B^\varepsilon(x)$ is a finite sum and is a function of only those $x_i$ for which 
$\min_{j\in B}||j-i||\leq r$. On the other hand, when $i$ is such that $\min_{j\in \Z^d \backslash B}||j-i||> r$, then 
$$\frac{\p W_B^{\varepsilon}(x)}{\p x_i} = \sum_{j\in B} \p_{i}S_{j}^{\varepsilon}(x) = \sum_{j\in\Z^d} \p_iS_j^{\varepsilon}(x) = V'(x_i)+\varepsilon R_i(x) \, .$$
In view of this, we define the exterior and the interior of $B$ as
$$\overline{B}:=\{i \in \Z^d \, | \, \min_{j\in B}||j-i||\leq r\} \ \mbox{and}\ \mathring{B}:=\{i\in \Z^d \, | \, \min_{j\in \Z^d\backslash B}||j-i||> r\}\, .
$$
These definitions are such that $x$ solves (\ref{FKepsilon2}) if and only if for every finite $B\subset \Z^d$ and every $v:\Z^d\to\R$ with ${\rm supp}(v)\subset \mathring{B}$ it holds that 
$$\left. \frac{d}{dt}\right|_{t=0} \!\!\! W_B^{\varepsilon}(x+tv)=0\, .$$
\noindent The first result of this section is of a technical nature:
\begin{lemma}\label{confinedcontinuation}
Let $K>0$ and let $\delta_0, \varepsilon_0>0$ be as in Theorem \ref{continuation}. Then there is an $0<\varepsilon_1\leq \varepsilon_0$ so that for all $0\leq \varepsilon\leq \varepsilon_1$ the following is true: 

Let $x$ be a solution of (\ref{antilimit}) of bounded oscillation ${\rm osc}(x)\leq K$ and assume that all $x_i$ are local minima of $V$, let $z\in B_{\delta_0}(x)$, let $B\subset \Z^d$ be finite and define
$$A_{\delta_0,B,x,z} :=\{v:\Z^d\to \R \, |\, {\rm supp}(v)\subset \mathring{B}\ \mbox{and}\ z+v\in B_{\delta_0}(x)\, \}\, .$$
Then there exists a unique configuration $v_{B}^{\varepsilon}(z)\in A_{\delta_0,B,x,z}$ such that  
$$\left. \frac{d}{dt}\right|_{t=0} \!\!\! W_B^{\varepsilon}(z+v^{\varepsilon}_B(z)+tw)= \sum_{i\in \mathring{B}} \left( V'(z_i+v^{\varepsilon}_B(z)_i)+\varepsilon R_i(z+v^{\varepsilon}_B(z)) \right) w_i= 0$$
 for all $w:\Z^d\to\R$ with ${\rm supp}(w)\subset \mathring{B}$.
 
Furthermore, the map 
\begin{align}
\nonumber
& d^{\varepsilon}_{B,z} : v\mapsto W^{\varepsilon}_{B}(z+v) - W_{B}^{\varepsilon}(z) \ \mbox{defined on}\ A_{\delta_0,B,x,z}  \nonumber
\end{align}
is convex and $v_{B}^{\varepsilon}(z)$ is its unique minimizer. The map
$$D_{B}^{\varepsilon}: z\mapsto  W^\varepsilon_{B}(z+v_{B}^{\varepsilon}(z)) - W^\varepsilon_{B}(z) 
\leq 0\, $$
is continuous for pointwise convergence and $D^\varepsilon_{B}(z)= 0$ if and only if $z$ solves (\ref{FKepsilon2}) on $\mathring{B}$.


\end{lemma}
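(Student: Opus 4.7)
The plan is to reduce the lemma to a finite-dimensional convex optimization problem and to exploit the strict positivity of $V''$ at the local minima $x_i$. Since $W^\varepsilon_B$ depends only on the finitely many coordinates indexed by $\overline{B}$ and admissible variations $v$ are supported on the finite set $\mathring{B}$, everything takes place in the finite-dimensional space $\R^{\mathring{B}}$, with $A_{\delta_0,B,x,z}$ being the intersection of this space with an axis-aligned open box; this is a bounded, convex, open set.

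First I would establish strict convexity of $d^\varepsilon_{B,z}$ on $A_{\delta_0,B,x,z}$. Because each $x_i$ is a local minimum of the Morse function $V$, the estimate $|V''(x_i)|>c$ from Theorem \ref{continuation} reads $V''(x_i)\geq c$, and the choice of $\delta_0$ in that proof yields $V''(\xi)\geq c/2$ for every $\xi\in (x_i-\delta_0,x_i+\delta_0)$. Thus the Hessian of $v\mapsto W^\varepsilon_B(z+v)$ has diagonal entries at least $c/2$, while the off-diagonal interaction Hessian is bounded in operator norm by $\varepsilon$ times a constant depending only on $C_2$ and $r$, using the bound $|\partial_{i,k}S_j|\leq C_2/(2r+1)^{2d}$ from the proof of Theorem \ref{continuation}. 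For $\varepsilon_1$ small enough the total Hessian is bounded below by $c/4$, so $d^\varepsilon_{B,z}$ is strictly convex on $A_{\delta_0,B,x,z}$.

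Next I would prove existence, uniqueness and interiority of the minimizer. On the compact closure $\overline{A_{\delta_0,B,x,z}}$ the continuous strictly convex function $d^\varepsilon_{B,z}$ attains a unique minimum $v^\varepsilon_B(z)$. To rule out the boundary, I evaluate the partial derivative at a face $z_i+v_i=x_i+\delta_0$: since $V'(x_i)=0$ and $V''\geq c/2$, we have $V'(x_i+\delta_0)\geq c\delta_0/2$, while $|\varepsilon R_i(z+v)|\leq \varepsilon C_1$ uniformly. Shrinking $\varepsilon_1$ so that $\varepsilon_1 C_1<c\delta_0/4$ makes the partial strictly positive on this face; the opposite face behaves symmetrically. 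Hence the minimizer lies in the interior and is characterized by the first-order stationarity condition, which is precisely the Euler--Lagrange equation stated in the lemma; any other interior critical point would also be a minimizer, contradicting strict convexity.

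Finally, $v=0\in A_{\delta_0,B,x,z}$ with $d^\varepsilon_{B,z}(0)=0$, whence $D^\varepsilon_B(z)\leq 0$; and $D^\varepsilon_B(z)=0$ is equivalent to $v=0$ being the unique minimizer, which by the Euler--Lagrange equation is equivalent to $V'(z_i)+\varepsilon R_i(z)=0$ for all $i\in\mathring{B}$, i.e.\ $z$ solving (\ref{FKepsilon2}) on $\mathring{B}$. For continuity of $D^\varepsilon_B$, note that $W^\varepsilon_B$ depends only on the finitely many coordinates in $\overline{B}$, so it is continuous under pointwise convergence. The Euler--Lagrange equation combined with the Morse lower bound gives the uniform estimate $|z_i+v^\varepsilon_B(z)_i-x_i|\leq 2\varepsilon C_1/c<\delta_0/2$, so the minimizers stay safely in the interior; a subsequence/uniqueness argument in the finite-dimensional space $\R^{\mathring{B}}$ then shows $v^\varepsilon_B(z^n)\to v^\varepsilon_B(z)$ whenever $z^n\to z$ pointwise on $\overline{B}$, yielding continuity of $D^\varepsilon_B$. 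The main obstacle is the interiority step: without it the first-order condition could fail, and it is precisely this estimate that determines $\varepsilon_1$.
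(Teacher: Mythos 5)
Your proof is correct, but it takes a genuinely different route from the paper's. The paper adapts the quasi-Newton contraction operator $K_{\varepsilon,x}$ of Theorem \ref{continuation}: it defines $K_{\varepsilon,B,x,z}$ to act as the contraction on coordinates in $\mathring{B}$ and to pin $X_i=z_i$ elsewhere, observes that the contraction constant is at most that of $K_{\varepsilon,x}$ and that $K_{\varepsilon,B,x,z}$ still maps $B_{\delta_0}(x)$ to itself, and thereby gets existence, uniqueness and continuous $z$-dependence of the stationary point ``for free'' from the fixed-point theorem; convexity is then proved afterwards only to upgrade ``unique stationary point'' to ``unique minimizer''. You instead phrase everything as a finite-dimensional strictly convex minimization on the open box $A_{\delta_0,B,x,z}\cap\R^{\mathring{B}}$: strict convexity gives a unique minimizer on the compact closure, and you rule out the boundary by comparing $|V'|$ (which grows like $c\delta_0/2$ at distance $\delta_0$ from $x_i$) against the uniform bound $\varepsilon C_1$ on $|\varepsilon R_i|$, which forces interiority and hence the Euler--Lagrange equation; uniqueness of interior critical points then follows from strict convexity, and continuity of $D^\varepsilon_B$ is obtained by a compactness/subsequence argument using your quantitative interiority bound $|z_i+v^\varepsilon_B(z)_i-x_i|\leq 2\varepsilon C_1/c$. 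The paper's route is more economical because it reuses the contraction machinery already set up; yours is more self-contained and elementary, and isolates the interiority estimate (which is implicit in the paper's claim that the contraction maps $B_{\delta_0}(x)$ into itself) as the step that fixes the threshold $\varepsilon_1$. Both yield a valid, if numerically slightly different, choice of $\varepsilon_1$.
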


\begin{proof}
The proof of the first statement is similar to that of Theorem \ref{continuation}. In fact, we can simply adapt (\ref{defF}) and define 
\begin{equation*} F_{B,x,z}(X,\varepsilon)_i:=\left\{ \begin{array}{ll} \p W^{\varepsilon}_B(X)/\p x_i = V'(X_i)+\varepsilon R_i(X) & \text{if}\ i\in \mathring{B}, \\ z_i-X_i & \text{if} \ i\notin \mathring{B}.\end{array}\right.\end{equation*}
Then $F_{B,x,z}(X,\varepsilon)=0$, if and only if $X_i=z_i$ for all $i\in \Z^d \backslash \mathring{B}$ and $V'(X_i)+\varepsilon R_i(x)=0$ for all $i\in \mathring{B}$. If we denote by 
$x_{B,x,z}$ the configuration defined by 
$$(x_{B,x,z})_i=\left\{ \begin{array}{ll} x_i & \text{if}\ i\in \mathring{B}\, , \\ z_i& \text{if} \ i\notin \mathring{B}\, ,\end{array}\right.$$ 
then it is clear that $F_{B,x,z}(x_{B,x,z},0)=0$ and that the Fr\'echet derivative $D_X  F_{B,x,z}(x_{B,x,z}, 0)$ has a bounded inverse. In fact, the resulting contraction operator is given by $$K_{\varepsilon, B,x,z}(X)_i:= \left\{ \begin{array}{ll} X_i - \frac{V'(X_i) + \varepsilon R_i(X)}{V''(x_i)} & \text{if}\ i\in \mathring{B}\, , \\ z_i& \text{if} \ i\notin \mathring{B}\, .\end{array}\right.$$ 
This makes it clear that the contraction constant of $K_{\varepsilon, B,x,z}$ is at least as small as that of the contraction $K_{\varepsilon, x}$ defined in (\ref{defF}) and that $K_{\varepsilon, B,x,z}$ maps $B_{\delta_0}(x)$ into itself if $K_{\varepsilon, x}$ does. Thus, for $0\leq \varepsilon \leq \varepsilon_0$ our map $K_{\varepsilon, B,x,z}$ is a contraction on $B_{\delta_0}(x)$. What results is a unique fixed point $z+v_{B}^{\varepsilon}(z)\in B_{\delta_0}(x)$ so that $v_{B}^{\varepsilon}(z)\in A_{\delta_0,B,x,z}$. Clearly, $v_{B}^{\varepsilon}(z)$ depends continuously on $z$. 

Since $v^{\varepsilon}_B(z)$ is unique, it is actually the unique stationary point of the map $$v\mapsto d_{B,z}^{\varepsilon}(v) :=W_B^{\varepsilon}(z+v)- W_B^{\varepsilon}(z)\ \mbox{defined on} \ A_{\delta_0,B,x,z}\, .$$
Now we use that all $x_i$ are local minima of $V$: because $V$ is a Morse function, we may assume that $V''\geq c$ for some $c>0$ on $(x_i-\delta_0, x_i+\delta_0)$ for all $i\in \Z^d$. In turn, because $|\p_{i,k}S_j(X)|\leq C_2/(2r+1)^{2d}$ is uniformly bounded and $w_iw_k\leq \frac{1}{2}(w_i^2+w_k^2)$, this implies that 
\begin{align}\nonumber
 \left. \frac{d^2}{dt^2}\right|_{t=0}\!\!\!\!\!\!\! d^{\varepsilon}_{B,z}(v+tw) = 
\sum_{j\in \mathring{B}} V''(z_j+v_j)w_j^2  + \varepsilon \!\!\!\!\! \sum_{\tiny \begin{array}{c} j\in B \\ i,k\in \mathring{B}\end{array} } \!\!\!\!\! \p_{i,k}S_j(z+v) w_iw_k  \\
\geq c \sum_{j\in \mathring{B}} w_j^2  - \frac{\varepsilon C_2}{2(2r+1)^{2d}} \!\!\!\!\!\!\!\!\! \sum_{\tiny \begin{array}{c} j\in B \\ i,k\in \mathring{B}\cap B_{r}(j)\end{array} }\!\!\!\!\!\!\!\! (w_i^2+w_k^2) 
 \! \geq  \left(c - \frac{\varepsilon C_2}{2}\right) \sum_{j\in \mathring{B}} w_j^2\, . \nonumber
\end{align} 
We conclude that $d_{B,z}^{\varepsilon}$
is convex as soon as $0\leq \varepsilon \leq \varepsilon_1 :=\min\{ 2c/C_2, \varepsilon_0\}$. Because $v=v_{B}^{\varepsilon}(z)$ is the unique stationary point of this map, it is also its unique minimizer. 

The remaining statements of the lemma follow immediately.
\end{proof}
\noindent The quantity $D^{\varepsilon}_B(z)$ measures the amount by which $z$ fails to be a local action minimizer among configurations that are equal to $z$ outside $\mathring{B}$ and are $\delta_0$-close to $x$. One of its simple and important properties is that it is subadditive as a function of the set $B$:

\begin{lemma}\label{subadditive}
Assume the conditions of Lemma \ref{confinedcontinuation} are satisfied. When $B_1, \ldots, B_n\subset B$ are pairwise disjoint finite sets, then
$$D^{\varepsilon}_B(z)\leq D^{\varepsilon}_{B_1}(z)+ \ldots + D^{\varepsilon}_{B_n}(z)\, .$$
\end{lemma}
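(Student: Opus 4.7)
The plan is to exploit the variational characterization of $v_B^\varepsilon(z)$ as the unique minimizer of $d_{B,z}^\varepsilon$ on $A_{\delta_0,B,x,z}$ provided by Lemma \ref{confinedcontinuation}, and to construct an explicit competitor in $A_{\delta_0,B,x,z}$ by superposing the minimizers on the smaller sets $B_k$.

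First I would set $v:=\sum_{k=1}^n v_{B_k}^\varepsilon(z)$. Since ${\rm supp}(v_{B_k}^\varepsilon(z))\subset \mathring{B}_k\subset B_k$ and the $B_k$ are pairwise disjoint, the supports of the $n$ summands are pairwise disjoint, so at each index $i$ at most one term contributes. Hence $(z+v)_i=(z+v_{B_k}^\varepsilon(z))_i$ for $i\in\mathring{B}_k$ and $(z+v)_i=z_i$ otherwise, which gives $z+v\in B_{\delta_0}(x)$ because each $z+v_{B_k}^\varepsilon(z)$ lies in $B_{\delta_0}(x)$ and $z\in B_{\delta_0}(x)$. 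A direct check of the definition of the interior shows that $B_k\subset B$ implies $\mathring{B}_k\subset\mathring{B}$, so ${\rm supp}(v)\subset\mathring{B}$ and therefore $v\in A_{\delta_0,B,x,z}$.

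The crucial step is the decomposition
$$W_B^\varepsilon(z+v)-W_B^\varepsilon(z)=\sum_{k=1}^n\bigl(W_{B_k}^\varepsilon(z+v_{B_k}^\varepsilon(z))-W_{B_k}^\varepsilon(z)\bigr)=\sum_{k=1}^n D_{B_k}^\varepsilon(z).$$
This is where the finite-range assumption \textbf{A} enters decisively. The key observation is that, by definition of the interior, $i\in\mathring{B}_k$ implies $B_r(i)\subset B_k$. Consequently, if $j\in B$ is such that some coordinate of $v$ inside $B_r(j)$ is nonzero, say $v_i\neq 0$ with $i\in\mathring{B}_k$, then $j\in B_r(i)\subset B_k$. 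Disjointness of the supports then forces $v|_{B_r(j)}=v_{B_k}^\varepsilon(z)|_{B_r(j)}$, so that $S_j^\varepsilon(z+v)=S_j^\varepsilon(z+v_{B_k}^\varepsilon(z))$ for $j\in B_k$, while $S_j^\varepsilon(z+v)=S_j^\varepsilon(z)$ for $j\in B\setminus\bigcup_k B_k$. Summing over $j\in B$ yields the displayed identity.

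Once this identity is in place, the conclusion is immediate from the minimizing property of $v_B^\varepsilon(z)$: since $v\in A_{\delta_0,B,x,z}$,
$$D_B^\varepsilon(z)=d_{B,z}^\varepsilon(v_B^\varepsilon(z))\leq d_{B,z}^\varepsilon(v)=W_B^\varepsilon(z+v)-W_B^\varepsilon(z)=\sum_{k=1}^n D_{B_k}^\varepsilon(z).$$
I expect the only non-routine point to be the bookkeeping in the decomposition identity; the inclusion $B_r(i)\subset B_k$ for $i\in\mathring{B}_k$ is the single geometric ingredient that makes all cross interactions between different $B_k$'s vanish, and the rest reduces to invoking Lemma \ref{confinedcontinuation}.
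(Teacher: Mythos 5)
Your proof is correct, and it is the same variational argument as the paper's, but presented more completely. The paper first proves the special case where $B_1\cup B_2=B$ and $B_1\cap B_2=\emptyset$ via the estimate
$$D^\varepsilon_B(z)=\min_{v\in A_{\delta_0,B,x,z}}\bigl(W^\varepsilon_{B_1}(z+v)-W^\varepsilon_{B_1}(z)+W^\varepsilon_{B_2}(z+v)-W^\varepsilon_{B_2}(z)\bigr)\leq D^\varepsilon_{B_1}(z)+D^\varepsilon_{B_2}(z)\,,$$
and then finishes by induction combined with $D^\varepsilon_{B\setminus(B_1\cup\cdots\cup B_n)}\leq 0$. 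Read literally, the displayed inequality does not follow from elementary properties of minima; it already requires exactly the competitor $v=v^\varepsilon_{B_1}(z)+v^\varepsilon_{B_2}(z)$ and the finite-range observation that each $v^\varepsilon_{B_k}(z)$, being supported in $\mathring{B}_k$, does not change $W^\varepsilon_{B_m}$ for $m\neq k$ nor $S^\varepsilon_j$ for $j\in B\setminus\bigcup_k B_k$. The paper leaves this implicit; you spell it out precisely, via the inclusion $B_r(i)\subset B_k$ for $i\in\mathring{B}_k$, and you treat all $n$ sets at once so that the remainder $B\setminus\bigcup_k B_k$ is handled automatically inside the decomposition identity rather than by a separate inequality. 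In short: same underlying idea, but your version supplies the geometric step the paper's terse min-inequality quietly relies on, and avoids the induction.
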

\begin{proof}
Assume first that $B_1 \cup B_2 = B$ and $B_1\cap B_2=\emptyset$. Then $W_{B}^{\varepsilon} = W_{B_1}^{\varepsilon} + W_{B_2}^{\varepsilon}$ and hence
\begin{align}\nonumber
 D^{\varepsilon}_B(z) & =  \min_{v\in A_{\delta_0, B, x,z}} \left( W_{B_1}^{\varepsilon}(z+v)- W_{B_1}^{\varepsilon}(z) +W_{B_2}^{\varepsilon}(z+v)- W_{B_2}^{\varepsilon}(z)\right)   \\
  \nonumber & \leq \min_{v\in A_{\delta_0, B_1, x,z}} \left( W_{B_1}^{\varepsilon}(z+v)- W_{B_1}^{\varepsilon}(z) \right) + \min_{v\in A_{\delta_0, B_2, x,z}} \left( W_{B_2}^{\varepsilon}(z+v)- W_{B_2}^{\varepsilon}(z)\right)  \\ \nonumber  & = D^{\varepsilon}_{B_1}(z) + D^{\varepsilon}_{B_2}(z)\, .
\end{align}
The lemma now follow by induction, and from the fact that $D^{\varepsilon}_{B\backslash (B_1\cup \ldots \cup B_n)}\leq 0$.
\end{proof}

\noindent The following lemma describes what happens if the continuation of a Birkhoff configuration is not Birkhoff. To formulate it, let us define for any two configurations $x, y:\Z^d\to\R$ the configurations $x\wedge y$ and $x\vee y$ as
$$(x\wedge y)_i:=\min\{x_i, y_i\}\ \mbox{and} \ (x \vee y)_i:=\max\{x_i, y_i\}\, .$$

\begin{lemma}\label{uniform_aubry}
Let $x$ be a recurrent Birkhoff solution of (\ref{antilimit}) of rotation vector $\omega \in \R^d\backslash \Q^d$ so that all $x_i$ are local minima of $V$. Moreover, let $\varepsilon_1>0$ be as in Lemma \ref{confinedcontinuation} and $0< \varepsilon\leq \varepsilon_1$. Assume that the continuations $x(\varepsilon)$ and $y(\varepsilon):=\tau_{k,l}x(\varepsilon)$ are not ordered. Then there exist an integer $r_1$ and a positive constant $\gamma>0$, such that on every ball $B_{r_1}(j)$ it holds that 
$$\left| D^\varepsilon_{B_{r_1}(j)}(x(\varepsilon) \wedge y(\varepsilon))\right| \geq \gamma\ \text{and} \ \left| D^\varepsilon_{B_{r_1}(j)}(x(\varepsilon) \vee y(\varepsilon))\right| \geq \gamma\, .$$
\end{lemma}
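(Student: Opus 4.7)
The plan is to combine a local ``equation failure'' estimate at a strict crossing of $x(\varepsilon)$ and $y(\varepsilon)$ with the uniform recurrence of the Birkhoff orbit of $x$. I will produce a nearest-neighbor pair $(i^\ast,k^\ast)$ at which $z^-:=x(\varepsilon)\wedge y(\varepsilon)$ fails equation (\ref{FKepsilon2}) by an amount bounded below independently of the pair, and then use that $x$ takes values in the finite set $\{\sigma_1,\ldots,\sigma_N\}+\Z$ to propagate this failure into every ball $B_{r_1}(j)$. The bound for $z^+:=x(\varepsilon)\vee y(\varepsilon)$ will follow by the symmetric argument with $y(\varepsilon)$ playing the role of $x(\varepsilon)$.

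First observe that $z^\pm\in B_{\delta_0}(x\wedge y)$, resp.\ $B_{\delta_0}(x\vee y)$, and that $x\wedge y$ and $x\vee y$ both solve (\ref{antilimit}) with all coordinates at local minima of $V$ (a minimum or maximum of two local minima is a local minimum); hence Lemma \ref{confinedcontinuation} applies. The non-ordering hypothesis makes $A^{++}:=\{i:x(\varepsilon)_i>y(\varepsilon)_i\}$ and $A^{--}:=\{i:x(\varepsilon)_i<y(\varepsilon)_i\}$ both nonempty, so by connectedness of $\Z^d$ there is an adjacent pair $(i^\ast,k^\ast)$ with $i^\ast\notin A^{++}$ and $k^\ast\in A^{++}$. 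At $i^\ast$ we have $z^-_{i^\ast}=x(\varepsilon)_{i^\ast}$ and $z^-\le x(\varepsilon)$ pointwise with strict inequality at $k^\ast$, and since $x(\varepsilon)$ solves (\ref{FKepsilon2}), interpolation gives
\[
V'(z^-_{i^\ast})+\varepsilon R_{i^\ast}(z^-)=\varepsilon\sum_k\Bigl(\int_0^1\sum_j\partial_{i^\ast,k}S_j(\cdot)\,dt\Bigr)(z^-_k-x(\varepsilon)_k)\ge\varepsilon\alpha\bigl(x(\varepsilon)_{k^\ast}-y(\varepsilon)_{k^\ast}\bigr),
\]
with $\alpha>0$ extracted from the strict negativity of $\partial_{i^\ast,k^\ast}S_{i^\ast}$ granted by condition {\bf C} at nearest neighbors (all other terms in the sum have nonnegative sign). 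A one-dimensional variation in direction $e_{i^\ast}$, together with the uniform convexity modulus $c-\varepsilon C_2/2$ of $v\mapsto W^\varepsilon_B(z^-+v)$ from Lemma \ref{confinedcontinuation}, then yields the quadratic improvement $D^\varepsilon_B(z^-)\le -\bigl(\varepsilon\alpha(x(\varepsilon)_{k^\ast}-y(\varepsilon)_{k^\ast})\bigr)^2/(2(c+\varepsilon C_2/2))$ for any $B$ with $i^\ast\in\mathring B$.

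It remains to show that such a crossing pair, with a uniform lower bound on the gap $x(\varepsilon)_{k^\ast}-y(\varepsilon)_{k^\ast}$, can be found inside $\mathring{B_{r_1}(j)}$ for every $j$. For the gap, I will locate $k^\ast$ in the ``robust'' region $\{i:x_i>y_i\}$ of the anti-continuum limit, where $x_{k^\ast}-y_{k^\ast}$ exceeds the minimal distance between distinct critical points of $V$ and hence, after subtracting $2\delta_0$, furnishes a uniform positive gap for $x(\varepsilon)_{k^\ast}-y(\varepsilon)_{k^\ast}$. For the uniform-in-$j$ recurrence, the symbolic configurations $\bar x,\bar y:\Z^d\to\{\sigma_1,\ldots,\sigma_N\}$ form a minimal $\Z^d$-shift system by Theorem \ref{xplusminuslemma}, so every finite pattern that appears somewhere appears syndetically; applied to the local pattern around a single crossing pair, this produces the required $r_1$, and the continuity of the continuation (Theorem \ref{continuous}) ensures that the translated $\varepsilon$-configuration is again a strict crossing. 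The main obstacle, in my view, is the combinatorial step of producing $(i^\ast,k^\ast)$ with $k^\ast$ in the robust region starting only from the non-ordering hypothesis, since at the anti-continuum limit $x$ and $y$ are ordered and the $\varepsilon$-crossings live entirely on the resonant set $\{i:x_i=y_i\}$; to circumvent this I would instead locate the adjacent pair on the boundary of the robust region $\{i:x_i>y_i\}$ and exploit the Birkhoff structure to guarantee that this boundary lies adjacent to the $A^{--}$-set where $z^-<x(\varepsilon)$.
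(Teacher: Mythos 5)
Your proposal takes a genuinely different, constructive route from the paper, and it has a real gap at exactly the point you flag. The paper argues by contradiction: if no uniform $\gamma$ existed, one could translate to a point $k_n$ where $D^\varepsilon_{B_n(k_n)}(x(\varepsilon)\wedge y(\varepsilon))\to 0$, extract a pointwise limit $(x_\infty(\varepsilon),y_\infty(\varepsilon))$ by Tychonov compactness and the translate-invariance of $D^\varepsilon_{B_m(0)}$, use continuity of $D^\varepsilon_{B_m(0)}$ and Lemma~\ref{confinedcontinuation} to conclude that $x_\infty(\varepsilon)\wedge y_\infty(\varepsilon)$ is itself a solution of (\ref{FKepsilon2}), invoke the comparison principle (Lemma~\ref{maximumprinciple}) to deduce that $x_\infty(\varepsilon)$ and $y_\infty(\varepsilon)$ are ordered, and then push this order back to $(x(\varepsilon),y(\varepsilon))$ via the homeomorphism $\Phi_\varepsilon$ of Theorem~\ref{continuous} together with recurrence. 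No explicit $r_1$, $\gamma$ are produced; the soft argument entirely sidesteps the quantitative crossing analysis you attempt.

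The obstruction to your approach is more serious than a combinatorial inconvenience. The set $A^{--}=\{i:x(\varepsilon)_i<y(\varepsilon)_i\}$ lies inside the resonant set $S=\{i:x_i=y_i\}$, and there is no reason for $A^{--}$ to abut the robust region $R=\{i:x_i>y_i\}$. Worse, if $A^{--}$ sits at distance $M_2$ from $R$, then $x$ and $y$ agree on $B_{M_2}(i)$ for $i\in A^{--}$, and the finite-propagation estimate inside the proof of Theorem~\ref{continuous} (the iterates $K^m_{\varepsilon,x}$ leave $x$ unchanged on $B_{M_2-mr}(0)$) forces $|x(\varepsilon)_i-y(\varepsilon)_i|\lesssim \delta_0 2^{-m}$ with $m\approx M_2/r$; so the crossing gap can indeed shrink to zero as $d(A^{--},R)\to\infty$. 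Consequently your quadratic lower bound $|D^\varepsilon_{B}(z^-)|\gtrsim \varepsilon^2\alpha^2\,(\text{gap})^2$ on a fixed-size ball around a single crossing pair cannot be made uniform in $j$, and neither can a syndeticity argument recover it, since the local pattern of $(x,y)$ near such a deep crossing carries no quantitative information about the $\varepsilon$-gap. In short, a nonzero lower bound on $D^\varepsilon_{B_{r_1}(j)}$ comes from the cumulative defect over a \emph{large} ball, which is what the paper's compactness argument captures; a single nearest-neighbor crossing estimate cannot.
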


\begin{proof} 
Let us write $y=\tau_{k,l}x$ so that $y(\varepsilon)=(\tau_{k,l}x)(\varepsilon)=\tau_{k,l}(x(\varepsilon))$ (by uniqueness) and let us assume that $x=x^-(0)$ (recall the definition in Theorem \ref{xplusminuslemma}; the proof is similar when $x=x^+(0)$). Then $y=x^-(\omega\cdot k + l)$.

We shall pursue a proof by contradiction, so assume that the first conclusion of the lemma does not hold. Then there exists, for every $n>0$, a sequence of indices $m\mapsto k_m^n$ such that $$\lim_{m\to \infty} \left| D^\varepsilon_{B_{n}(k_m^n)}(x(\varepsilon)\wedge y(\varepsilon))\right| = 0 \, .$$ 
For an appropriate diagonal sequence $k_n:=k^n_{m(n)}$ it then holds that 
$$\lim_{n\to \infty} \left| D^\varepsilon_{B_{n}(k_n)}(x(\varepsilon)\wedge y(\varepsilon))\right| = 0 \, .$$ 
It follows from Lemma \ref{subadditive} that $\left| D^\varepsilon_{B_{m}(k_n)}(x(\varepsilon)\wedge y(\varepsilon))\right| \leq \left| D^\varepsilon_{B_{n}(k_n)}(x(\varepsilon)\wedge y(\varepsilon))\right|$ for every $m\leq n$ and hence, for every $m>0$,
$$\lim_{n\to \infty} \left| D^\varepsilon_{B_{m}(k_n)}(x(\varepsilon)\wedge y(\varepsilon))\right| = 0 \, .$$ 
Equivalently, for $l_n\in \Z$ arbitrary, and because the translate of the minimum is the minimum of the translates,
$$\lim_{n\to \infty} \left| D^\varepsilon_{B_{m}(0)}(\tau_{k_n,l_n}x(\varepsilon)\wedge \tau_{k_n,l_n}y(\varepsilon))\right| = 0 \, .$$ 
Choosing $l_n$ in such a way that $(\tau_{k_n,l_n}x(\varepsilon))_0 = x(\varepsilon)_{k_n}+l_n \in [0,1]$, and hence $y(\varepsilon)_{k_n}+l_n$ uniformly bounded, we may assume (by going twice to a subsequence and using the compactness of the set of Birkhoff configurations of a given rotation vector that follows from Tychonov's theorem) that $\tau_{k_n,l_n}x(\varepsilon)\to x_{\infty}(\varepsilon)$ and that $\tau_{k_n,l_n}y(\varepsilon) \to y_{\infty}(\varepsilon)$.
Then it holds in particular that 
$$\tau_{k_n, l_n}( x(\varepsilon)\wedge y(\varepsilon)) = \tau_{k_{n}, l_{n}}x(\varepsilon)\wedge \tau_{k_{n},l_{n}}y(\varepsilon) \to x_{\infty}(\varepsilon)\wedge y_{\infty}(\varepsilon)\, $$ and by continuity of $D^\varepsilon_{B_{m}(0)}$ it follows that $D_{B_{m}(0)}(x_{\infty}(\varepsilon)\wedge y_{\infty}(\varepsilon))=0$.

By Lemma \ref{confinedcontinuation} this means that $x_{\infty}(\varepsilon)\wedge y_{\infty}(\varepsilon)$ solves the equations (\ref{FKepsilon2}) on $\mathring{B}_{m}(0) = B_{m-r}(0)$. This holds for every $m$, so $x_{\infty}(\varepsilon)\wedge y_{\infty}(\varepsilon)$ solves (\ref{FKepsilon2}) everywhere. But so does $x_{\infty}(\varepsilon)$: it is the pointwise limit of solutions and hence a solution itself. Because $x_{\infty}(\varepsilon)\wedge y_{\infty}(\varepsilon) \leq x_{\infty}(\varepsilon)$, we thus conclude from the comparison principle (see Lemma \ref{maximumprinciple} in Appendix \ref{proofappendix}) that either $x_{\infty}(\varepsilon)\wedge y_{\infty}(\varepsilon) = x_{\infty}(\varepsilon)$ or $x_{\infty}(\varepsilon)\wedge y_{\infty}(\varepsilon) \ll x_{\infty}(\varepsilon)$. In both cases it follows that $x_{\infty}(\varepsilon)$ and $y_{\infty}(\varepsilon)$ are ordered (in the first case $x_{\infty}(\varepsilon) \leq y_{\infty}(\varepsilon)$ and hence by the comparison principle either $x_{\infty}(\varepsilon)\ll y_{\infty}(\varepsilon)$ or $x_{\infty}(\varepsilon)= y_{\infty}(\varepsilon)$ and in the second case $y_{\infty}(\varepsilon)\ll x_{\infty}(\varepsilon)$). 

Recall from Theorem \ref{continuous} that $\Phi_{\varepsilon}: x\mapsto x(\varepsilon)$ is a homeomorphism from $\Gamma(x)$ to $\Gamma_{\varepsilon}(x)$. As a result, because $\tau_{k_n, l_n}x(\varepsilon)\to x_{\infty}(\varepsilon)$, also $\tau_{k_n,l_n}x\to x_{\infty} \in \Gamma(x)$ and similarly $\tau_{k_n,l_n}y\to y_{\infty}\in \Gamma(x)$.

 Moreover, let $s:=\lim_{n\to \infty} \omega \cdot k_n +l_n$ so that $x_{\infty}=x^{\pm}(s)$ and $y_{\infty}=x^{\pm}(s+ \omega\cdot k + l)$. If we then choose any sequence $(K_n, L_n)$ with $\omega\cdot K_n + L_n \nearrow -s$ then it follows that $\tau_{K_n,L_n} x_{\infty}\to x^-(0) = x$ and $\tau_{K_n,L_n} y_{\infty}\to x^-(\omega\cdot k + l)=y$. Using the homeomorphism $\Phi_{\varepsilon}$ again, we find that $\tau_{K_n, L_n}x_{\infty}(\varepsilon)\to x(\varepsilon)$ and $\tau_{K_n,L_n}y_{\infty}(\varepsilon)\to y(\varepsilon)$. Because $x_{\infty}(\varepsilon)$ and $y_{\infty}(\varepsilon)$ are ordered, this implies that also $x(\varepsilon)$ and $y(\varepsilon)$ are ordered, contrary to our assumption. 


A similar reasoning proves the statement for $x(\varepsilon)\vee y(\varepsilon)$.
\end{proof}

\noindent We are now ready to prove Theorem \ref{corollaryordering}.

\begin{proofof}{Theorem \ref{corollaryordering}} 
Let $\varepsilon_1>0$ be as in Lemma \ref{uniform_aubry} and $0<\varepsilon\leq \varepsilon_1$. Let us write $y=\tau_{k,l}x$ so that $y(\varepsilon)=\tau_{k,l}x(\varepsilon)$ and assume without loss of generality that $x\leq y$. Recall that $x(\varepsilon)\in B_{\delta_0}(x)$ and $y(\varepsilon)\in B_{\delta_0}(y)$. It is important to remark that then also the minimum $x(\varepsilon)\wedge y(\varepsilon)\in B_{\delta_0}(x)$ and the maximum $x(\varepsilon)\vee y(\varepsilon)\in B_{\delta_0}(y)$.

Suppose now that $x(\varepsilon)$ and $y(\varepsilon)$ are not ordered. It then follows from Lemma \ref{uniform_aubry} that $|D^{\varepsilon}_{B_{r_1}(j)}(x(\varepsilon)\wedge y(\varepsilon))|>\gamma$ and $|D^{\varepsilon}_{B_{r_1}(j)}(x(\varepsilon)\vee y(\varepsilon))|>\gamma$ for every $j\in \Z^d$, some integer $r_1$ and some $\gamma>0$. It is easy to prove that there exist an integer $r_2>0$ and an $\alpha>0$ so that any ball of radius $r_3>r_2$ contains at least $\alpha r_3^d$ disjoint balls of radius $r_1$. In view of Lemma \ref{subadditive}, we conclude that for any such $r_3\geq r_2$ it holds that 
$$\left| D_{B_{r_3}(0)}(x(\varepsilon)\wedge y(\varepsilon)) \right| > \alpha \gamma r_3^d\ \mbox{and}\ \left| D_{B_{r_3}(0)}(x(\varepsilon)\vee y(\varepsilon)) \right| > \alpha \gamma r_3^d\, .$$ 
This means that on large enough balls, $x(\varepsilon)\wedge y(\varepsilon)\in B_{\delta_0}(x) $ and $x(\varepsilon)\vee y(\varepsilon)\in B_{\delta_0}(y)$ are extremely far from being local minimizers. In particular, if we write $$m(\varepsilon) := x(\varepsilon)\wedge y(\varepsilon)+v^{\varepsilon}_{B_{r_3}(0)}(x(\varepsilon)\wedge y(\varepsilon))\ \mbox{and}\ M(\varepsilon) := x(\varepsilon)\wedge y(\varepsilon)+v^{\varepsilon}_{B_{r_3}(0)}(x(\varepsilon)\wedge y(\varepsilon))\, ,$$
then it holds that
\begin{align}\label{estim1}
W^\varepsilon_{B_{r_3}(0)}\left(m(\varepsilon) \right) \!+\! W^\varepsilon_{B_{r_3}(0)}\left(M(\varepsilon) \right)\! \leq\!
 W^\varepsilon_{B_{r_3}(0)}(x(\varepsilon) \wedge y(\varepsilon)) \!+ \! W^\varepsilon_{B_{r_3}(0)}(x(\varepsilon) \vee y(\varepsilon))  \!-\! 2 \alpha \gamma r_3^d\, .
\end{align}
On the other hand, due to the minimum-maximum property (see Lemma \ref{Aubry} in Appendix \ref{proofappendix}), 
\begin{align}\label{estim2}
 W^\varepsilon_{B_{r_3}(0)}(x(\varepsilon) \wedge y(\varepsilon)) + W^\varepsilon_{B_{r_3}(0)}(x(\varepsilon) \vee y(\varepsilon)) \leq W^\varepsilon_{B_{r_3}(0)}(x(\varepsilon)) + W^\varepsilon_{B_{r_3}(0)}(y(\varepsilon))\, .
\end{align}
This in turn suggests that $x(\varepsilon)$ or $y(\varepsilon)$ should not be a local minimizer either. To make this intuition rigorous, let us define the configurations
$\tilde x^{\varepsilon}$ and $\tilde y^{\varepsilon}$ by 
$$\tilde x^{\varepsilon}_i:=\left\{ \begin{array}{ll} m(\varepsilon)_i & \mbox{for}\ i \in \mathring{B}_{r_3}(0)  \\ x(\varepsilon)_i & \mbox{for}\ i \notin \mathring{B}_{r_3}(0) \end{array} \right. \mbox{and}\ \ \tilde y^{\varepsilon}_i:=\left\{ \begin{array}{ll} M(\varepsilon)_i & \mbox{for}\ i \in \mathring{B}_{r_3}(0)  \\ x(\varepsilon)_i & \mbox{for}\ i \notin \mathring{B}_{r_3}(0)\end{array} \right. \, .$$ 
Then $\tilde x^{\varepsilon} \in B_{\delta_0}(x)$ and $\tilde y^{\varepsilon} \in B_{\delta_0}(y)$. Moreover,
$\tilde x^{\varepsilon}$ is a variation of $x(\varepsilon)$ supported in $\mathring{B}_{r_3}(0)$ and $\tilde y^{\varepsilon}$ is a variation of $y(\varepsilon)$ supported in $\mathring{B}_{r_3}(0)$. In addition, it is not hard to prove that 
\begin{align}\label{estim3}
\left| W^{\varepsilon}_{B_{r_3}(0)}(m(\varepsilon))-W^\varepsilon_{B_{r_3}(0)}(\tilde x^{\varepsilon}) \right| \leq \beta r_3^{d-1}\ \mbox{and} \ \left| W^{\varepsilon}_{B_{r_3}(0)}(M(\varepsilon))-W^\varepsilon_{B_{r_3}(0)}(\tilde x^{\varepsilon})\right| \leq \beta r_3^{d-1} 
\end{align}
for some constant $\beta>0$, because $m(\varepsilon)_i=\tilde x^{\varepsilon}_i$ and $M(\varepsilon)_i=\tilde y^{\varepsilon}_i$ for all $i\in \overline{B}_{r_3}(0)$ except those outside $\mathring{B}_{r_3}(0)$. Together, estimates (\ref{estim1}), (\ref{estim2}) and (\ref{estim3}) yield that 
$$W^\varepsilon_{B_{r_3}(0)}(\tilde x^{\varepsilon})+W^\varepsilon_{B_{r_3}(0)}(\tilde y^{\varepsilon}) \leq W^\varepsilon_{B_{r_3}(0)}(x(\varepsilon)) + W^\varepsilon_{B_{r_3}(0)}(y(\varepsilon))  -2\alpha \gamma r_3^d+2 \beta r_3^{d-1}\, .$$ 
Hence, if $r_3$ is large enough, it must hold that 
$$\mbox{either}\ W^\varepsilon_{B_{r_3}(0)}(\tilde x^{\varepsilon})< W^\varepsilon_{B_r(0)}(x(\varepsilon))\ \mbox{or}\ W^\varepsilon_{B_{r_3}(0)}(\tilde y^{\varepsilon})<W^\varepsilon_{B_{r_3}(0)}(y(\varepsilon))\, .$$ This contradict the fact that $x(\varepsilon)$ and $y(\varepsilon)$ must both be local minimizers with respect to variations in $A_{\delta_0, B_{r_3}(0), x, x(\varepsilon)}$ and $A_{\delta_0, B_{r_3}(0), y, y(\varepsilon)}$ respectively, according to Lemma \ref{confinedcontinuation}.
 \end{proofof}

 \section{Proof of Theorem \ref{mainthm}}\label{proofsection}

In this section we combine the results obtained so far to finish the proof of Theorem \ref{mainthm}. Throughout this section, $\omega\in\R^d\backslash \Q^d$ will be fixed. We shall denote by $N$ the number of geometrically distinct local minima of $V$ and by $\Delta_{N-1}$ the $(N-1)$-dimensional simplex. 

Our strategy will be to associate to any $p\in \Delta_{N-1}$ a lamination of solutions to (\ref{FKepsilon2}) of rotation vector $\omega$. By Theorem \ref{clas2} and Theorem \ref{cross2} any such lamination is characterized by a unique Radon probability measure on $\R/\Z$. Hence our map will be of the form 
$$\Psi_{\varepsilon}: \Delta_{N-1}\to \{ \mu_{x^{\varepsilon}} \, |\, x^{\varepsilon} \ \mbox{is a Birkhoff solution to}\ (\ref{FKepsilon2})\ \mbox{of rotation vector}\ \omega\}\subset  M(\R/\Z)\, .$$
To define this map more precisely, let $0\leq \sigma_1,...,\sigma_N<1$ be the distinct local minima of $V$. As was shown in Example \ref{ex1}, there exists for every $p\in \Delta_{N-1}$ precisely one (up to translation) left-continuous hull function $\phi_p^-:\R\to\R$ such that $|(\phi_p^-)^{-1}(\sigma_j)|=p_j$. It generates the lamination
$$\Gamma(x^p)=\{ (x^p)^{\pm}(s)\, |\, (x^p)^{\pm}(s)_i=\phi_p^{\pm} (s+\omega\cdot i)\}\ \mbox{where}\  x^p_i:= \phi_p^{-}(\omega\cdot i)\, .$$
Since each element of $\Gamma(x^p)$ takes values in $\{\sigma_1, \ldots, \sigma_N\}+\Z$, this lamination consists entirely of local minimizers of (\ref{antilimit}). Hence, if $0< \varepsilon \leq \varepsilon_1$, Theorem \ref{corollaryordering} guarantees that the elements of $\Gamma(x^p)$ continue to form a lamination $\Gamma(x^{p}(\varepsilon))$ of local minimizers of (\ref{FKepsilon2}), which in turn has its own hull functions $\phi^{\pm}_{x^{p}(\varepsilon)}$ and Radon probability measure $\mu_{x^{p}(\varepsilon)}$. 

The map $\Psi_\varepsilon:\Delta^{N-1}\to M(\R/\Z)$ is given by
\begin{align}\label{psieps}
\Psi_\varepsilon(p):= \mu_{x^p(\varepsilon)}\, .
\end{align}
Theorem \ref{homeothm} below is the precise version of Theorem \ref{mainthm}.  

\begin{theorem}\label{homeothm} Let $N$ be the number of geometrically distinct local minima of $V$. Then there exists an $\varepsilon_1>0$ so that for all $0< \varepsilon \leq \varepsilon_1$ the map
$$\Psi_\varepsilon: \Delta_{N-1} \to \{ \mu_{x^{\varepsilon}} \, |\, x^{\varepsilon} \ \mbox{is a Birkhoff solution to}\ (\ref{FKepsilon2})\ \mbox{of rotation vector}\ \omega\}\subset M(\R/\Z)$$ defined in (\ref{psieps}) is a homeomorphism.
\end{theorem}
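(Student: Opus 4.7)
My strategy is to show that $\Psi_\varepsilon$ is a well-defined continuous bijection onto the stated image; since $\Delta_{N-1}$ is compact and the vague topology on $M(\R/\Z)$ is Hausdorff, $\Psi_\varepsilon$ will then automatically be a homeomorphism onto its image. Well-definedness is the first easy step: because $x^p$ is a recurrent Birkhoff solution of (\ref{antilimit}) whose values lie in $\{\sigma_1,\ldots,\sigma_N\}+\Z$, i.e.\ at local minima of $V$, Theorem \ref{corollaryordering} makes $x^p(\varepsilon)$ Birkhoff, and recurrence of $x^p(\varepsilon)$ follows from translation-equivariance of $\Phi_\varepsilon$ (by uniqueness of continuation, $(\tau_{k,l}x)(\varepsilon)=\tau_{k,l}(x(\varepsilon))$) together with the continuity in Theorem \ref{continuous}. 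So $\mu_{x^p(\varepsilon)}$ is a well-defined Radon probability measure.

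For surjectivity I would argue as follows. Given a Birkhoff solution $x^\varepsilon$ of (\ref{FKepsilon2}) of rotation vector $\omega$, Theorem \ref{birkhoffcontinuation} writes $x^\varepsilon=x(\varepsilon)$ with $x$ a Birkhoff solution of (\ref{antilimit}), and Theorem \ref{conversethm} forces every $x_i$ to be a local minimum of $V$. Consequently $\mu_x$ is supported on $\{\sigma_1,\ldots,\sigma_N\}\subset\R/\Z$, so $\mu_x=\sum_{j=1}^N p_j\delta_{\sigma_j}=\mu_{x^p}$ for a unique $p\in\Delta_{N-1}$. Theorems \ref{cross2} and \ref{clas2} then give $\Gamma(x)=\Gamma(x^p)$, hence $\Gamma(x^\varepsilon)=\Gamma(x^p(\varepsilon))$ after applying the homeomorphism $\Phi_\varepsilon$, and so $\mu_{x^\varepsilon}=\mu_{x^p(\varepsilon)}=\Psi_\varepsilon(p)$. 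The same chain of theorems run in reverse gives injectivity: $\Psi_\varepsilon(p)=\Psi_\varepsilon(q)$ forces $\Gamma(x^p(\varepsilon))=\Gamma(x^q(\varepsilon))$, hence $\Gamma(x^p)=\Gamma(x^q)$ via $\Phi_\varepsilon^{-1}$, so $\mu_{x^p}=\mu_{x^q}$ and therefore $p=q$.

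The real work lies in continuity. Suppose $p^n\to p$ in $\Delta_{N-1}$. By Example \ref{ex1}, $\phi^-_{p^n}\to \phi^-_p$ almost everywhere; since $\{\omega\cdot i\,|\,i\in\Z^d\}$ is countable, for almost every $s\in\R$ one has $\phi^-_{p^n}(s+\omega\cdot i)\to\phi^-_p(s+\omega\cdot i)$ simultaneously in every $i$, i.e.\ $(x^{p^n})^-(s)\to(x^p)^-(s)$ pointwise. The crucial identity is
$$\phi^-_{x^q(\varepsilon)}(s)=(x^q)^-(s)(\varepsilon)_0\ \mbox{for every}\ q\in\Delta_{N-1}\ \mbox{and every}\ s\in\R.$$
Indeed, picking $(k_n,l_n)$ with $\omega\cdot k_n+l_n\nearrow s$, one has $\tau_{k_n,l_n}x^q\to(x^q)^-(s)$ pointwise, and translation-equivariance together with the continuity of $\Phi_\varepsilon$ yields $x^q(\varepsilon)_{k_n}+l_n=(\tau_{k_n,l_n}x^q)(\varepsilon)_0\to(x^q)^-(s)(\varepsilon)_0$. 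Applying the same continuity of $\Phi_\varepsilon$ to the pointwise-convergent sequence $(x^{p^n})^-(s)\to(x^p)^-(s)$ then gives $\phi^-_{x^{p^n}(\varepsilon)}(s)\to\phi^-_{x^p(\varepsilon)}(s)$ for almost every $s$, and Theorem \ref{vaguetheorem} upgrades this to vague convergence $\Psi_\varepsilon(p^n)\to\Psi_\varepsilon(p)$.

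The main obstacle is justifying the last invocation of continuity of $\Phi_\varepsilon$, since $(x^{p^n})^-(s)$ and $(x^p)^-(s)$ land in \emph{different} minimal sets $\Gamma(x^{p^n})$ and $\Gamma(x^p)$, whereas Theorem \ref{continuous} is stated for a fixed collection $\Gamma$. However, inspecting the proof of Theorem \ref{continuous} one sees that the key estimate $||x^n(\varepsilon)-x(\varepsilon)|_{B_{M_1}(0)}||_{\infty}\leq 2\delta_0 2^{-m}$ only uses that $x^n$ and $x$ both solve (\ref{antilimit}) and agree on a sufficiently large ball, so $\Phi_\varepsilon$ is in fact continuous on the entire compact set of anti-continuum solutions of oscillation at most $K$, and the argument goes through. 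Once this point is clear, everything else is bookkeeping combining the classification Theorems \ref{clas2}, \ref{cross2} with the continuation Theorems \ref{corollaryordering}, \ref{birkhoffcontinuation}, \ref{conversethm}, so the homeomorphism property of $\Psi_\varepsilon$ follows.
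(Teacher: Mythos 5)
Your proof is correct and follows the same overall strategy as the paper: verify well-definedness via Theorem \ref{corollaryordering}, establish bijectivity via the continuation and classification theorems, prove continuity via the hull functions, and then invoke compactness of $\Delta_{N-1}$ and Hausdorffness of $M(\R/\Z)$. Two steps diverge from the paper in a way worth noting. For injectivity, the paper gives a more explicit argument: it chooses continuous functions $f_j$ equal to $1$ on $(\sigma_j-\delta_0,\sigma_j+\delta_0)$ and $0$ on the other intervals $(\sigma_k-\delta_0,\sigma_k+\delta_0)$, and shows directly that $\mu_{x^p(\varepsilon)}(f_j)=p_j$ by using $|\phi^-_{x^p(\varepsilon)}(s)-\phi^-_{x^p}(s)|<\delta_0$; this recovers $p$ from $\Psi_\varepsilon(p)$ by an explicit formula. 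Your argument instead runs Theorems \ref{cross2} and \ref{clas2} in reverse and pulls $\Gamma(x^p(\varepsilon))=\Gamma(x^q(\varepsilon))$ back through $\Phi_\varepsilon^{-1}$, which is softer but equally valid. For continuity, the paper picks a single $t$ for which $\phi_p^-$ is continuous at every $t+\omega\cdot i$, defines $x^{p^n}_i:=\phi^-_{p^n}(t+\omega\cdot i)$, applies Theorem \ref{continuous} once to the sequence $x^{p^n}\to x^p$, and then feeds the result through Theorems \ref{prevaguetheorem} and \ref{vaguetheorem}. You instead work almost everywhere in $s$ and derive the identity $\phi^-_{x^q(\varepsilon)}(s)=(x^q)^-(s)(\varepsilon)_0$, which in effect re-proves a version of Theorem \ref{prevaguetheorem} on the fly; the bookkeeping is correct, though it duplicates some of the paper's earlier work. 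Finally, your worry that Theorem \ref{continuous} ``is stated for a fixed collection $\Gamma$'' does not actually require inspecting its proof: $\Gamma$ may simply be taken to be the set of all Birkhoff solutions to (\ref{antilimit}) of rotation vector $\omega$, which has uniformly bounded oscillation, and then $\Phi_\varepsilon$ is continuous on all of it — this is in fact how the paper itself uses Theorem \ref{continuous} when it writes $x^{p^n}(\varepsilon)\to x^p(\varepsilon)$, since those too lie in different minimal sets.
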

\begin{proof}
Let us choose $0<\varepsilon_1\leq \varepsilon_0$ to be the minimum of the $\varepsilon_1$'s in the statements of Theorems \ref{corollaryordering}, \ref{conversethm} and \ref{birkhoffcontinuation} and assume that $0<\varepsilon\leq \varepsilon_1$.

To prove that $\Psi_{\varepsilon}$ is injective, recall from Theorem \ref{continuation} that $|X(\varepsilon)_i-X_i|< \delta_0$ for all $i\in \Z^d$, all $0\leq \varepsilon\leq \varepsilon_0$, all $X\in \Gamma(x^p)$ and all $p\in \Delta_{N-1}$. Because $(x^p)^-(s)_0 = \phi_{x^p}^-(s)$ and $(x^p(\varepsilon))^-(s)_0 = \phi_{x^p(\varepsilon)}^-(s)$ it follows that 
$$|\phi^-_{x^p(\varepsilon)}(s)- \phi^-_{x^p}(s)| < \delta_0\ \mbox{for all}\ s\in \R/\Z\, .$$ 
Let us now choose for every $1\leq j \leq N$ a continuous function $f_j: \R/\Z\to\R$ so that 
$$f_j|_{(\sigma_{j}-\delta_0, \sigma_j+\delta_0)} = 1\ \mbox{and}\ f_j|_{(\sigma_{k}-\delta_0, \sigma_k+\delta_0)} = 0\ \mbox{for all}\ k\neq j\, .$$ 
It is clear from (\ref{disjointintervals}) that such functions exist. It follows that 
$$\mu_{x^p(\varepsilon)} (f_j) = \int_{\R/\Z} f_j(\phi^-_{x^p(\varepsilon)}(s))ds = \int_{\R/\Z} f_j(\phi^-_{x^p}(s))ds  = p_j\, .$$ 
In particular, $\mu_{x^{p^1}(\varepsilon)}\neq \mu_{x^{p^2}(\varepsilon)}$ if $p^{1}\neq p^{2}$, so $\Psi_{\varepsilon}$ is injective.



Next, Theorems \ref{conversethm} and \ref{birkhoffcontinuation} together imply that if $x^{\varepsilon}$ is any recurrent Birkhoff solution to (\ref{FKepsilon2}) of rotation vector $\omega$, then $x^{\varepsilon} = x(\varepsilon)$ for some recurrent Birkhoff local minimizing solution $x$ to (\ref{antilimit}) of rotation vector $\omega$. This implies in particular that $x_i\in \{\sigma_1, \ldots, \sigma_N\}+\Z$ for all $i\in\Z^d$ and as a consequence also the hull functions $\phi^{\pm}_x$ take values only in $\{\sigma_1, \ldots, \sigma_N\}+\Z$. Hence, $\phi^{\pm}_x = \phi^{\pm}_{x^p}(\cdot +s)$ for some $p\in \Delta_{N-1}$ and some $s\in \R$. By Theorem \ref{clas2} it thus follows that $\Gamma(x)=\Gamma(x^p)$. Moreover, Theorem \ref{corollaryordering} implies that $\Gamma(x^p(\varepsilon))$ is ordered (in fact a lamination by Proposition \ref{Cantorcontinuation}) and as a result,  $\Gamma(x(\varepsilon))=\Gamma(x^p(\varepsilon))$. In particular $\mu_{x^{\varepsilon}}=\mu_{x(\varepsilon)} = \mu_{x^p(\varepsilon)}$. Thus, $\Psi_{\varepsilon}$ is surjective.

It remains to prove that $\Psi_{\varepsilon}$ is a homeomorphism. So let us assume that $\lim_{n\to\infty} p^n=p$ for certain $p^1, p^2, \ldots, p\in \Delta^{N-1}$ and let us denote by $\phi_p^-:\R\to \R$ the unique left-continuous hull function for which $|(\phi_p^-)^{-1}(\sigma_j)|=p_j$ and $\lim_{s\nearrow 0}\phi^-_p(s)\leq 0$ and $\lim_{s\searrow 0}\phi^-_p(s)> 0$. Then it holds that $\lim_{n\to\infty} \phi^-_{p^n} = \phi^-_p$ at any point of continuity of $\phi_p^-$, as was shown in Example \ref{ex1}. Let $t\in \R$ be any point with the property that $\phi^-_{p}$ is continuous at every point $t+\omega\cdot i$ with $i\in\Z^d$. Such $t$ exists because otherwise $\phi_p^-$ would have uncountably many discontinuities. We can then define $x^{p^n}_i:=\phi_{p^n}^-(t+\omega\cdot i)$ and $x^p_i:=\phi^-_{p}(t+\omega\cdot i)$ and by construction, $\lim_{n\to \infty} x^{p^n} = x^p$ pointwise. 

By Theorem \ref{continuous} it then holds that also $x^{p^n}(\varepsilon)\to x^p(\varepsilon)$ pointwise and therefore by Theorem \ref{prevaguetheorem} and Theorem \ref{vaguetheorem} we may conclude that $\mu_{x^{p^n}(\varepsilon)} \to \mu_{x^p(\varepsilon)}$ vaguely. So $\Psi_{\varepsilon}$ is continuous. 

It follows that $\Psi_{\varepsilon}$ is a homeomorphism because it is a continuous bijection from a compact topological space into a topological Hausdorff space. 
\end{proof}

\appendix
\section{More proofs}\label{proofappendix}
In this appendix, we  
provide the proofs of Propositions \ref{numbertheory} and \ref{propertieshull} and Theorems \ref{xplusminuslemma} and \ref{Cantortheorem}. In addition, we formulate and prove the maximum principle and the comparison principle satisfied by equation (\ref{FKepsilon2}). All these results are standard, but complete proofs are not always easy to find in the literature.

 \begin{proofof}{Proposition \ref{numbertheory}}
Denote by $x^{\omega}$ the linear configuration defined by $x^{\omega}_i := x_0+\omega\cdot i$. Then $\tau_{k,l}x^{\omega}-x^{\omega} =\omega\cdot k + l$. Suppose for instance that $\omega\cdot k + l>0$, that is that $\tau_{k,l}x^{\omega} > x^{\omega}$, but assume on the other hand that $\tau_{k,l}x \leq x$. This means that $\tau_{k,l}x-x \leq 0$ and hence also that $\tau_{k,l}^2x-\tau_{k,l}x = \tau_{k,0}(\tau_{k,l}x-x) \leq 0$. Thus, $\tau_{k,l}^2x- x = (\tau_{k,l}^2x - \tau_{k,l}x)+(\tau_{k,l}x -x )\leq 0 $, i.e. $\tau^2_{k,l}x\leq x$. By induction we then find that
$\tau_{k,l}^n x \leq x$, for every $n\geq 1$.  On the other hand, $ \tau_{k,l}^n x^{\omega} = x^{\omega} + n(\omega\cdot k + l)$. This contradicts the fact that $\sup_i |\tau_{k,l}^n(x^{\omega}-x)_i| = \sup_i |(x^{\omega}-x)_i| \leq 1$ is uniformly bounded in $n$.
\end{proofof}

\begin{proofof}{Proposition \ref{propertieshull}}
We will prove Proposition \ref{propertieshull} for $\phi_x^-$. For $\phi_x^+$ the proofs are similar.

{\it 1)} Let $N$ be any integer larger than or equal to $s$. When $\omega\cdot k_n+l_n\nearrow s$, then for all $n\in \N$ there is an $M\in\N$ such that for all $m\geq M$ it holds that $\omega\cdot k_n+l_n \leq \omega\cdot k_m+l_m <s\leq \omega\cdot 0+ N$ and hence, by Proposition \ref{numbertheory}, that $x_{k_n}+l_n=(\tau_{k_n,l_n}x)_0\leq(\tau_{k_m,l_m}x)_0= x_{k_m}+l_m\leq x_0+N$. This implies that $\lim_{n\to\infty}x_{k_n}+l_n$ exists and is equal to $\limsup_{n\to\infty}x_{k_n}+l_n$.

Next, assume that $\omega\cdot k_n+l_n \nearrow s$ as $n\to\infty$ and $\omega\cdot K_m+L_m \nearrow s$ as $m\to\infty$. This implies that for every $n\in \N$ there is an $m\in \N$ so that $\omega\cdot K_m+L_m\geq \omega\cdot k_n+ l_n$. With Proposition \ref{numbertheory} this gives that $x_{K_m}+L_m\geq x_{k_n}+l_n$ and hence we find that $\limsup_{m\to\infty}x_{K_m}+L_m\geq \limsup_{n\to\infty}x_{k_n}+l_n$. Reversing the roles of the two sequences, we also find that $\limsup_{m\to\infty}x_{K_m}+L_m\leq \limsup_{n\to\infty}x_{k_n}+l_n$. Thus, $\phi_x^-$ is well-defined.

{\it 2)} If $\omega\cdot k_n+l_n\nearrow s$ and $\omega\cdot K_m+L_m\searrow s$, then $\omega\cdot k_n+l_n< \omega\cdot K_m+L_m$ for all $n$ and $m$ and thus, $\phi_x^-(s)=\lim_{n\to\infty}x_{k_n}+l_n \leq \lim_{m\to\infty}x_{K_m}+L_m =\phi_x^+(s)$. 

If $\omega\cdot k_n+l_n\nearrow 0$, then $\omega\cdot (k_n+i) + l_n \nearrow \omega\cdot i$ and $\tau_{k_n,l_n}x\leq x$ for all $n$ and therefore $\phi_x^-(\omega\cdot i) =\lim_{n\to\infty} x_{k_n+i}+l_n \leq x_i$.

{\it 3)} If $s_1 < s_2$ and $\omega\cdot k_n+l_n \nearrow s_1$ and $\omega\cdot K_m+L_m \nearrow s_2$, then there is an $M$ so that for all $n$ and all $m\geq M$ it holds that $\omega\cdot k_n+l_n<\omega\cdot K_m+L_m$. This clearly implies that $\phi_x^-(s_1)\leq\phi_x^-(s_2)$.  

{\it 4)} If $\omega\cdot k_n + l_n\nearrow s$, then $\omega\cdot k_n+ (l_n+1)\nearrow s+1$. This implies that $\phi_x^-(s+1)=\lim_{n\to\infty}x_{k_n}+l_n+1=\phi^-_x(s)+1$.  

{\it 5)} If $\omega\cdot k_n +l_n\nearrow s$, then $\omega\cdot (k_n+k)+(l_n+l)\nearrow s+\omega\cdot k+l$ and therefore $\phi_{\tau_{k,l}x}^-(s)=\lim_{n\to\infty} (\tau_{k,l}x)_{k_n}+l_n = \lim_{n\to\infty}x_{k+k_n}+(l+l_n)= \phi_x^-(s+\omega\cdot k+l)$.

{\it 6)} Assume that $s^n\nearrow s$ as $n\to\infty$. Because $\phi_x^-$ is nondecreasing, it follows immediately that $\lim_{n\to\infty}\phi_x^-(s^n)\leq \phi_x^-(s)$. To prove the opposite inequality, let $(k_m^n, l_m^n)$ be sequences of integers for which $\omega\cdot k_m^n+l_m^n\nearrow s^n$ as $m\to\infty$, so that $\phi^-_x(s^n)=\lim_{m\to\infty}x_{k_m^n}+l_m^n$.  We then have that for every $n$ there is an $m(n)$ so that $\omega\cdot k_{m(n)}^n+l_{m(n)}^n\nearrow s$, and thus 
\begin{align}
\lim_{n\to\infty}\phi_x^-(s^n) = \limsup_{n\to\infty}\phi_x^-(s_n)=\limsup_{n\to\infty}\limsup_{m\to\infty} x_{k_m^n}+l^n_m \geq \limsup_{n\to\infty}x_{k_{m(n)}^n}+l_{m(n)}^n = \phi_x^-(s)\ . \nonumber 
\end{align}
Thus, $\phi_x^-$ is left-continuous.

{\it 7)} Let $s_n\searrow s$. First of all, because $\phi_x^-\leq \phi_x^+$, we have that $\phi_x^-(s_n)\leq \phi_x^+(s_n)$. Because $\phi_x^+$ is right-continuous, this implies that $\lim_{n\to\infty}\phi_x^-(s_n)\leq \phi_x^+(s)$. To prove the other inequality, let us denote again by $(k_m^n, l_m^n)$ sequences of integers with $\omega\cdot k_m^n +l_m^n\nearrow s^n$. We then have that for every $n$ there is an $m(n)$ so that $\omega\cdot k_{m(n)}^n+l_{m(n)}^n\searrow s$, and thus
\begin{align}
\lim_{n\to\infty} \phi_x^-(s_n)=\liminf_{n\to\infty}\phi_x^-(s^n)=\liminf_{n\to\infty}\limsup_{m\to\infty} x_{k_m^n}+l^n_m \geq \liminf_{n\to\infty}x_{k_{m(n)}^n}+l_{m(n)}^n = \phi_x^+(s)\ . \nonumber 
\end{align}
This proves {\it 7)}.

{\it 8)} Assume that $\phi_x^-$ is right-continuous at some point $t\in\R$. Then it holds for any sequence $s^n\searrow t$ that $\phi_x^-(t)=\lim_{n\to\infty}\phi_x^-(s^n) = \phi_x^+(t)$ by property {\it 7)}. Thus, $\phi_x^-=\phi_x^+$ at all points of continuity of $\phi_x^-$. Similarly, if $\phi_x^-$ is not right-continuous at $t\in\R$, then $\phi_x^-(t)\neq \lim_{n\to\infty}\phi_x^-(s^n) = \phi_x^+(t)$.


{\it 9)} Because $\phi_x^-$ is nondecreasing, it has only countably many discontinuities. Thus it follows from {\it 8)} that $\phi_x^-  = \phi_x^+$ almost everywhere.
\end{proofof}

\begin{proofof}{Theorem \ref{xplusminuslemma}}
It is clear that $\Gamma(x)$ is nonempty. It follows from properties {\it 2)}, {\it 3)} and {\it 7)} of Proposition \ref{propertieshull} that when $s_1< s_2$, then $\phi_{x}^-(s_1+\omega\cdot i) \leq \phi_{x}^+(s_1+\omega\cdot i) \leq \phi_{x}^-(s_2+\omega\cdot i)\leq \phi_{x}^+(s_2+\omega\cdot i)$. As a consequence,
$$x^-(s_1)\leq x^+(s_1) \leq x^-(s_2)\leq x^+(s_2)\ \mbox{for}\ s_1<s_2\, .$$ 
Thus, $\Gamma(x)$ is ordered. Shift-invariance follows from property {\it 4)}, namely
$(\tau_{k,l}x^{\pm}(s))_i = x^{\pm}(s)_{i+k}+l = \phi_x^{\pm}(s+\omega\cdot (i+k))+l =\phi_x^{\pm}(s+\omega\cdot k+l+\omega\cdot i) =x^{\pm}(s +\omega\cdot k+l)_i$.
Thus,
\begin{align}\label{taulimits}
\tau_{k,l}x^{\pm}(s)= x^{\pm}(s+\omega\cdot k+l)\, .
\end{align} 
So $\Gamma(x)$ is shift-invariant. Next, we remark that properties {\it 6)} and {\it 7)} imply that 
\begin{align}\label{zlimits}
 x^{\pm}(s^n)\to x^{-}(s)\ \mbox{if} \ s^n\nearrow s \ \mbox{and}\  x^{\pm}(s^n)\to x^{+}(s^n)\ \mbox{if} \ s^n\searrow s\, .
\end{align} 
This means that $\Gamma(x)$ is closed under pointwise convergence. Finally, (\ref{taulimits}) and (\ref{zlimits}) together imply that $\Gamma(x)$ is minimal: because $\omega\in\R^d\backslash \Q^d$, every element of $\Gamma(x)$ is a limit of appropriate translates of itself or any other element of $\Gamma(x)$. In particular, every element of $\Gamma(x)$ is recurrent.
\end{proofof}

\begin{proofof}{Theorem \ref{Cantortheorem}}
It was already proved in Theorem \ref{xplusminuslemma} that $\Gamma(x)$ is closed. Moverover, $\Gamma(x)$ is perfect because each of its elements is recurrent. 

Next, assume first of all that $\phi^-_x$ is continuous. Then $\phi_x^-=\phi_x^+$.  Moreover, if $s_n\to s$, then $x^-(s_n)_i=\phi_x^-(s_n+\omega\cdot i) \to \phi_x^-(s+\omega\cdot i) = x^-(s)_i$ and therefore $x^-(s_n)\to x^-(s)$ pointwise. Thus, the map $s\mapsto x^-(s)=x^+(s)$ is a continuous bijection from $\R$ to $\Gamma(x)$ that descends to a continuous map from $\R/\Z$ to $\Gamma(x)/\Z$. Because $\R/\Z$ is compact and $\Gamma(x)/\Z$ is Hausdorff, we conclude that $\Gamma(x)$ is homeomorphic to $\R$ and hence connected.

When $\phi_x^-$ is not continuous, let $t$ be a point of discontinuity, so that $\phi_x^-(t)<\phi_x^+(t)$. Then $x^-(t) < x^+(t)$, which implies that $\Gamma(x)$ is not connected. In fact, one calls the order interval $[x^-(t), x^+(t)] :=\{x\in \R^{\Z^d}\, |\, x^-(t)\leq x \leq x^+(t)\}$ a {\it gap} in $\Gamma(x)$: it contains no elements of $\Gamma(x)$ other than its boundaries $x^{\pm}(t)$ themselves. 

We will show that this implies that between any two elements of $\Gamma(x)$ there exists a gap. So let $s_1\leq s_2$ be any two numbers. When $s_1=s_2$ then either $x^-(s_1)=x^+(s_1)$ or $x^-(s_1)<x^+(s_1)$, whence $[x^-(s_1), x^+(s_1)]$ forms a gap. Thus, we may assume that $s_1<s_2$. But then there exist $k$ and $l$ such that $s_1<t+\omega\cdot k+l<s_2$. It follows that
$$x^+(s_1)< x^-(t+\omega\cdot k+l)\leq x^+(t+\omega\cdot k+l) < x^-(s_2)\, .$$
Now we recall from (\ref{taulimits}) that $x^{\pm}(t+\omega\cdot k+l)= \tau_{k,l}x^{\pm}(t)$. Hence, being the translate of a gap, $[x^-(t+\omega\cdot k+l), x^+(t+\omega\cdot k+l)]$ is a gap itself. This proves that there is a gap between $x^+(s_1)$ and $x^-(s_2)$. In particular, $\Gamma(x)$ is totally disconnected: it splits as the disjoint union of the closed sets $ \{X\in \Gamma(x) \ | \ X \leq x^-(t+\omega\cdot k + l)\}$ and $\{X\in \Gamma(x) \ | \ X\geq x^+(t+\omega\cdot k + l)\}$ that contain $x^+(s_1)$ and $x^-(s_2)$ respectively.
\end{proofof}

\begin{lemma}[Minimum-maximum property]\label{Aubry}
For $\varepsilon \geq 0$, $B\subset \Z^d$ finite and $x,y:\Z^d\to \R$ arbitrary,
\begin{align}
W^\varepsilon_B(x \wedge y)+W^\varepsilon_B(x \vee y) \leq W^\varepsilon_B(x) + W^\varepsilon_B(y) \ .
\end{align}
\end{lemma}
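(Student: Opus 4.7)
My plan is to apply the classical Aubry interpolation argument, isolating the role of the twist condition (Condition C). The first step is to observe that the on-site potential contributes an exact equality: for each $j \in B$ the multisets $\{(x \wedge y)_j, (x \vee y)_j\}$ and $\{x_j, y_j\}$ coincide, so the $V$-terms on both sides of the claimed inequality cancel pairwise. I am then reduced to showing $F(x \wedge y) + F(x \vee y) \leq F(x) + F(y)$ for $F(z) := \sum_{j \in B} S_j(z)$, and by Condition~A this $F$ depends only on the finitely many coordinates $z_k$ with $k \in \overline{B}$, so I may work in the finite-dimensional space $\R^{\overline{B}}$ and freely use $C^2$ calculus.

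The heart of the argument is the construction of a two-parameter interpolation. I would partition $\overline{B}$ into $I_+ := \{k \in \overline{B} : x_k \leq y_k\}$ and $I_- := \{k \in \overline{B} : x_k > y_k\}$, and define $\phi:[0,1]^2 \to \R^{\overline{B}}$ by moving the $I_+$-coordinates linearly from $x_k$ to $y_k$ along the $s$-direction and the $I_-$-coordinates linearly from $y_k$ to $x_k$ along the $t$-direction. A direct check shows $\phi(0,1) = x$, $\phi(1,0) = y$, $\phi(0,0) = x\wedge y$ and $\phi(1,1) = x \vee y$, so the mixed-partial identity yields
$$F(x) + F(y) - F(x \wedge y) - F(x \vee y) = -\int_0^1\!\!\int_0^1 \p_s \p_t F(\phi(s,t))\, ds\, dt.$$
Expanding the integrand via the chain rule gives
$$\p_s \p_t F(\phi(s,t)) = \sum_{i \in I_+,\, k \in I_-} (y_i - x_i)(x_k - y_k)\, \p_{i,k} F(\phi(s,t)),$$
where each prefactor is nonnegative by definition of $I_\pm$, and the indices $i \in I_+$, $k \in I_-$ are automatically distinct because $I_+ \cap I_- = \emptyset$.

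I would then invoke Condition~C: summing $\p_{i,k} S_j \leq 0$ for $i \neq k$ over $j \in B$ gives $\p_{i,k} F \leq 0$, so the integrand is pointwise nonpositive, the integral is nonpositive, and the desired inequality for $F$ follows. Multiplying by $\varepsilon \geq 0$ and adding back the $V$-equality yields the claim. There is no genuine obstacle here; the only point requiring attention is that the double-integral manipulation requires a finite number of active coordinates, which is precisely what the finite-range assumption in Condition~A supplies.
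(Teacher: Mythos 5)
Your proof is correct and follows essentially the same route as the paper's: both reduce the inequality to a nonnegativity statement via a two-parameter linear interpolation between the four configurations, expand the mixed second partial $\partial_s\partial_t$ by the chain rule, observe that the cross terms involve only pairs $i\neq k$ with prefactors of a definite sign, and invoke the twist condition $\partial_{i,k}S_j\leq 0$. The only cosmetic differences are that you peel off the $V$-terms as an exact identity up front (the paper keeps $S_j^{\varepsilon}=V(x_j)+\varepsilon S_j$ together and lets the diagonal terms vanish because the two increment vectors have disjoint supports), and you label the corners of the unit square differently, but the argument is identical.
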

\begin{proof}
Let us write $\alpha:=(x\wedge y)-x$ and $\beta:=(x\vee y)-x$ and observe that $\alpha \leq 0$ and $\beta\geq 0$, while $\mbox{supp}(\alpha) \cap \mbox{supp}(\beta) = \emptyset$ and $y=(x\wedge y)+(x\vee y)-x= (x\wedge y) +\beta=\alpha + \beta + x$. Thus, 
\begin{align}\nonumber 
& W_B^{\varepsilon}(x)+W_B^{\varepsilon}(y)- W_{B}^{\varepsilon}(x\wedge y)-W_{B}^{\varepsilon}(x\vee y) = \\ \nonumber
 W_{B}^{\varepsilon}&(x) + W_{B}^{\varepsilon}(x+\alpha+\beta)-W_{B}^{\varepsilon}(x+\alpha)-W_{B}^{\varepsilon}(x+\beta)\, .
\end{align}
This expression can be put in integral form as
\begin{align}
 \nonumber
&  \int_0^1\int_0^1\frac{\p^2}{\p t\p s} W_{B}^{\varepsilon}(x+\alpha t+\beta s)ds dt = \\ \nonumber 
 \sum_{i,k\in \Z^d}&\sum_{j \in B}\left( \int_0^1\int_0^1 \partial_{i,k} S_j^{\varepsilon}(x + t\alpha+ s\beta)dsdt \right) \alpha_i\beta_k \, .
\end{align}
Here we wrote $S_j^{\varepsilon}(x)=V(x_j)+\varepsilon S_j(x)$. Since $\mbox{supp}(\alpha) \cap \mbox{supp}(\beta) = \emptyset$, we have that $\alpha_i\beta_i=0$ for all $i$. Moreover, the twist condition $\partial_{i,k}S_{j}\leq 0$ for all $i\neq k$ and the inequalities $\alpha_i\beta_k\leq 0$, guarantee that this expression is nonnegative. This proves the lemma.
\end{proof}
\begin{lemma}[Comparison principle]\label{maximumprinciple}
Let $\varepsilon>0$, $B\subset \Z^d$ finite and assume that $\mathring{B}$ is path-connected. Moreover, let $x, y:\Z^d\to\R$ be two configurations that satisfy  
$$ \p_iW_B^{\varepsilon}(x) = 0  \ \mbox{and}\ \p_iW_B^{\varepsilon}(y) = 0\ \mbox{for all}\ i\in \mathring{B}\, .$$ 
If $x_i\leq y_i$ for all $i\in \overline{B}$, then either $x_i = y_i$ for all $i\in \mathring{B}$ or $x_i < y_i$ for all $i\in \mathring{B}$.
\end{lemma}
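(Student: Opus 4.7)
The plan is a standard discrete strong-maximum-principle argument. Set $z := y - x$, so by hypothesis $z_i \geq 0$ for every $i \in \overline{B}$, and the goal is to show that the set $S := \{i \in \mathring{B} : z_i = 0\}$ is either empty or equal to $\mathring{B}$.

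First I would subtract the two Euler--Lagrange equations at an arbitrary $i \in \mathring{B}$. Using $\partial_i W_B^{\varepsilon}(u) = V'(u_i) + \varepsilon R_i(u)$ on $\mathring{B}$, together with the line-integral expansions $V'(y_i)-V'(x_i) = \left(\int_0^1 V''(x_i+tz_i)\,dt\right)z_i$ and $R_i(y)-R_i(x) = \sum_k \left(\int_0^1 \sum_j \partial_{i,k}S_j(x+tz)\,dt\right) z_k$, one obtains
\begin{equation*}
 0 \;=\; b_i\, z_i \;+\; \varepsilon \sum_{k} a_{i,k}\, z_k, \qquad a_{i,k} := \int_0^1 \sum_{j} \partial_{i,k} S_j(x + tz)\, dt,
\end{equation*}
with $b_i := \int_0^1 V''(x_i+tz_i)\,dt$. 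The sum is finite: for $i\in \mathring{B}$ the contributing $j$ have $\|j-i\|\leq r$ and all lie in $B$ (since $i$ is at distance $>r$ from $\Z^d\setminus B$), and for $a_{i,k}\neq 0$ one needs such a $j$ with $\|j-k\|\leq r$, forcing $k \in \overline{B}$. Hence every $z_k$ appearing in the identity satisfies $z_k \geq 0$. Moreover, condition \textbf{C} gives $a_{i,k} \leq 0$ for all $k \neq i$, and for each nearest neighbour $k$ of $i$ the $j=i$ summand contributes $\partial_{i,k}S_i < 0$ while the remaining $j$'s contribute non-positive terms, yielding $a_{i,k} < 0$.

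Now suppose for contradiction that $\emptyset \neq S \subsetneq \mathring{B}$. Since $\mathring{B}$ is path-connected in the nearest-neighbour graph of $\Z^d$, along any path in $\mathring{B}$ from a point of $S$ to a point of $\mathring{B}\setminus S$ one encounters adjacent indices $i_0 \in S$ and $k_0 \in \mathring{B}\setminus S$ with $\|k_0 - i_0\|=1$. Evaluating the identity at $i_0$: the term $b_{i_0}z_{i_0}$ vanishes since $z_{i_0}=0$, and the $k=i_0$ summand vanishes for the same reason, leaving
\begin{equation*}
 0 \;=\; \sum_{k\neq i_0} a_{i_0,k}\, z_k.
\end{equation*}
Each summand is a product of a non-positive factor $a_{i_0,k}$ and a non-negative factor $z_k$, so every summand must vanish. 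For nearest neighbours $k$ of $i_0$ we have $a_{i_0,k}<0$, forcing $z_k=0$. Applied to $k_0$ this contradicts $k_0 \notin S$, ruling out the existence of such $S$.

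The main obstacle is the sign-bookkeeping: one has to verify that only $z_k$ with $k\in\overline{B}$ ever appear in the linearised identity (so that $z_k\geq 0$ applies), and that the strictness in \textbf{C} is genuinely inherited by $a_{i_0,k}$ for nearest neighbours, which is what powers the propagation. Once these two points are established, path-connectedness finishes the argument by a single induction along a nearest-neighbour path.
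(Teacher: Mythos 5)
Your proposal is correct and follows essentially the same argument as the paper's proof: subtract the two Euler--Lagrange equations at an index where $x$ and $y$ agree, expand the difference as a line integral whose coefficients are nonpositive by condition~\textbf{C}, and use strict negativity of $\partial_{i,k}S_i$ for nearest neighbours together with path-connectedness of $\mathring{B}$ to force $z_k=0$ at an adjacent index where it was assumed positive. The only difference is presentational: the paper directly exhibits $\partial_k W_B^{\varepsilon}(y)-\partial_k W_B^{\varepsilon}(x)<0$ at the chosen pair, while you package the same inequality as ``all nonpositive terms in a zero sum must vanish''; these are logically interchangeable.
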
 
\begin{proof}
By contradiction: assume that $x_i\leq y_i$ for all $i\in \overline{B}$ and that there are indices $k, l\in \mathring{B}$ such that $x_k=y_k$ and $x_l<y_l$. Because $\mathring{B}$ is path-connected, it can be assumed that $||k-l||=1$. Now we compute
\begin{align}\nonumber
 \p_kW_B^{\varepsilon}(y) -   \p_kW_B^{\varepsilon}(x) & =  \sum_{j\in B}\left(\p_kS^{\varepsilon}_j(y) - \p_kS_j^{\varepsilon}(x)\right) = \\  \sum_{j\in B}\int_0^1 \frac{d}{dt} \partial_kS_j^{\varepsilon}(ty+(1-t)x) dt = & \!\!\!\!\!\!\! \sum_{{\tiny \begin{array}{c} j\in B\\ ||i-j||\leq r\end{array}}} \!\!\!\!\!\!\!  \left( \int_0^1\partial_{i,k} S^{\varepsilon}_j(ty + (1-t)x)dt \right)  (y_i - x_i)\, . \nonumber
\end{align}
Recall that for every $i\neq k$, it holds that $\p_{i,k}S^{\varepsilon}_j\leq 0$. Because $(y_i-x_i)\geq 0$ for all $i\in \overline{B}$ by assumption and $y_k-x_k=0$, every term in the above sum is nonpositive. But for the $l$ chosen above, $\partial_{k,l}S^{\varepsilon}_k=\varepsilon \p_{k,l}S_k<0$, while $y_l-x_l>0$. This proves that $\p_kW_B^{\varepsilon}(y) - \p_kW_B^{\varepsilon}(x)<0$, so $x$ and $y$ are not both stationary configurations.
\end{proof}
Lemma \ref{maximumprinciple} implies in particular that if $\varepsilon>0$ and $x$ and $y$ are solutions to (\ref{FKepsilon2}) and $x_i\leq y_i$ for all $i\in \Z^d$, then either $x_i=y_i$ for all $i\in \Z^d$ or $x_i<y_i$ for all $i\in \Z^d$.

\section{The standard map near an anti-integrable limit} \label{appendix}
In this appendix we describe some of the classical results on Hamiltonian twist maps near a nondegenerate anti-integrable limit. More on the application of Aubry-Mather theory in the theory of twist maps can be found in the classical references \cite{angenent88}, \cite{gole91}, \cite{gole92}, \cite{gole01},  \cite{MatherTopology} and \cite{MatherForni}. 

Let us streamline our exposition by discussing only one canonical example: the standard map, the quotient to $\R^n / \Z^n \times\R^n$ of the map
\begin{align}\label{smap}
T_{\varepsilon}: (x,y)\mapsto (x+y+\varepsilon^{-1} \nabla V(x), y+\varepsilon^{-1} \nabla V(x)) \ \mbox{on} \ \R^n\times\R^n.
\end{align}
Here $V:\R^n\to \R$ is a $\Z^n$-periodic Morse function, i.e. $V$ is twice continuously differentiable and its critical points are nondegenerate. 

It is straightforward to check, for $\varepsilon>0$, that a sequence $i\mapsto(x_i, y_i)\in\R^n\times \R^n$ is an orbit of $T_{\varepsilon}$ if and only if $y_i=x_i-x_{i-1}$ and 
\begin{align}\label{FKmapn}
\nabla V(x_i)-\varepsilon(x_{i+1}-2x_i+x_{i-1}) =0\ \mbox{for all} \ i\in \Z\, .
\end{align}
Letting $\varepsilon\to \infty$, the map $T_{\varepsilon}$ reduces to the integrable twist map $(x, y)\mapsto (x+y, y)$. This explains why the limit $\varepsilon\downarrow 0$ is sometimes called the {\it anti-integrable limit} of the standard map. In contrast with the map $T_{\varepsilon}$, the recurrence relation (\ref{FKmapn}) is well-defined for $\varepsilon=0$. Its solutions are all sequences $x=(\ldots, x_{-1}, x_0, x_1, \ldots)$ of critical points of $V$. By a result similar to Theorem \ref{continuation}, those sequences of critical points for which $||x_{i+1} - x_i||$ is uniformly bounded, can be continued to solutions $x(\varepsilon)$ of (\ref{FKmapn}) for $0<\varepsilon\ll 1$. This yields many interesting orbits of $T_{\varepsilon}$ near its anti-integrable limit.

By prescribing $y_i=x_i-x_{i-1}$ bounded but more or less at random, one produces for example orbits $(x_i(\varepsilon), y_i(\varepsilon))$ of $T_{\varepsilon}$ with chaotic transitions in the momenta $y_i(\varepsilon)$. This was shown in \cite{abramovici}, in which the idea of an anti-integrable limit was first introduced.  On the contrary in \cite{MackayMeiss}, orbits of $T_{\varepsilon}$ with arbitrary rotation vector $\omega=\lim_{i\to\pm\infty}\frac{x_i}{i}\in \R^n$ were constructed by continuation from the anti-integrable limit. This result implies that $T_{\varepsilon}$ possesses invariant Cantor sets of all irrational rotation vectors. 

In the special case that $n=1$ equation (\ref{FKmapn}) reduces to the one-dimensional variant of the familiar Frenkel-Kontorova problem
\begin{align}\label{FKmap1}
V'(x_i)-
\varepsilon(x_{i+1}-2x_i+x_{i-1}) =0\ \mbox{for all} \ i\in \Z\, .
\end{align}
Theorem \ref{mainthm} says that for every $\omega\in\R\backslash \Q$ there exists an $N-1$-dimensional family of laminations of solutions to (\ref{FKmap1}), where $N$ is the number of geometrically distinct local minima of $V$. Each of these laminations is of the form 
$$\Gamma(x(\varepsilon))=\Gamma_{\varepsilon}(x)=\{(\ldots, X_{-1}(\varepsilon), X_0(\varepsilon), X_1(\varepsilon), \ldots)\, |\, X\in \Gamma(x)\}\subset \R^{\Z}$$
for some Birkhoff sequence $x:\Z\to\R$ of local minima of $V$. 
 
 In turn, any such lamination corresponds to a $T_{\varepsilon}$-invariant Cantor subset 
 $$C_{\varepsilon}(x):= \{ (X_{0}(\varepsilon), Y_0(\varepsilon)) = (X_{0}(\varepsilon), X_0(\varepsilon)-X_{-1}(\varepsilon)) \, | \, X\in \Gamma(x)   \}\subset \R\times \R\, .$$
An invariant Cantor set $C_{\varepsilon}(x)\subset \R\times\R$ with the property that $\Gamma_{\varepsilon}(x)$ is a lamination, is sometimes called a {\it remnant circle} or {\it cantorus}, where the latter terminology was introduced by Percival \cite{percival2}. This terminology is motivated by the well-known theorem of Birkhoff that the orbits in a so-called rotational invariant circle for $T_{\varepsilon}$ in fact always form a foliation. Theorem \ref{mainthm} implies that the collection of remnant circles of $T_{\varepsilon}$ of irrational rotation number $\omega$ is homeomorphic to $\Delta_{N-1}$. This latter result is precisely the one described in \cite{baesens}. 

At the same time, the collection of invariant Cantor sets of $T_{\varepsilon}$ is actually much larger. This follows from Theorem \ref{disone} below, which we prove in case $n=1$. The same result is essentially contained in \cite{MackayMeiss}, although the consequences that are mentioned in the latter paper do not seem completely justified.
 \begin{theorem}\label{disone}
Let $x:\Z\to\R$ be any Birkhoff sequence of critical points (i.e. not necessarily local minima) of $V:\R\to \R$ with rotation number $\omega\in \R\backslash\Q$. Then the collection 
$$\{(X_0(\varepsilon), X_0(\varepsilon) - X_{-1}(\varepsilon))\ |\ X\in\Gamma(x)\}\subset\R^2$$ is an invariant Cantor set for $T_{\varepsilon}$.    
\end{theorem}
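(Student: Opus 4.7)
The plan is to transfer the Cantor-set structure of $\Gamma(x)$ to $C_\varepsilon(x) := \{(X_0(\varepsilon), X_0(\varepsilon)-X_{-1}(\varepsilon)) \,|\, X\in\Gamma(x)\}$ through a homeomorphism, and to derive $T_\varepsilon$-invariance from the translation invariance of $\Gamma(x)$ together with uniqueness of continuation.

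Every $X\in\Gamma(x)$ is a Birkhoff solution to (\ref{antilimit}) of rotation number $\omega$ (Theorem \ref{xplusminuslemma}), hence of uniformly bounded oscillation, so Theorem \ref{continuation} yields the unique continuation $X(\varepsilon)\in B_{\delta_0}(X)$ solving (\ref{FKmap1}), and Theorem \ref{continuous} makes $\Phi_\varepsilon:\Gamma(x)\to\Gamma_\varepsilon(x)$ a homeomorphism in the pointwise topology. Moreover, since every element of $\Gamma(x)$ takes values only in the discrete set of critical points of $V$, any two distinct $X^1,X^2\in\Gamma(x)$ satisfy $\|X^1-X^2\|_\infty\geq\gamma$ for some uniform $\gamma>0$; in particular the hull function $\phi_x^\pm$ is not continuous, so Theorem \ref{Cantortheorem} classifies $\Gamma(x)$ (and hence $\Gamma_\varepsilon(x)$) as a Cantor set.

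Next I introduce the evaluation map $\mathcal{E}:\Gamma_\varepsilon(x)\to\R^2$, $Y\mapsto(Y_0,Y_0-Y_{-1})$, and verify that it is a homeomorphism onto its image $C_\varepsilon(x)$. Continuity is immediate from pointwise convergence. Injectivity uses that (\ref{FKmap1}) reads, for $\varepsilon>0$, $Y_{i+1}=2Y_i-Y_{i-1}+V'(Y_i)/\varepsilon$, so the pair $(Y_{-1},Y_0)$ determines $Y$ uniquely by forward/backward iteration. Iterating the recurrence also shows that if $\mathcal{E}(Y^n)\to(a,b)$ in $\R^2$ then $Y^n_i$ converges for every $i\in\Z$, which gives continuity of $\mathcal{E}^{-1}$. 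Combining this with the closedness of $\Gamma(x)$ under pointwise convergence, and with the observation that $X^n_0$ is bounded (because $X^n_0\in B_{\delta_0}(X^n(\varepsilon))$ and $Y^n_0=X^n(\varepsilon)_0$ is bounded), after which the Birkhoff property of $X^n$ forces each $X^n_i$ to be bounded and a diagonal subsequence yields a pointwise limit in $\Gamma(x)$, one deduces that $C_\varepsilon(x)$ is closed in $\R^2$. Transferring closedness, perfectness and total disconnectedness through the homeomorphism $\mathcal{E}\circ\Phi_\varepsilon$ then certifies $C_\varepsilon(x)$ as a Cantor set.

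For $T_\varepsilon$-invariance, translation invariance of $\Gamma(x)$ (Theorem \ref{xplusminuslemma}) together with uniqueness of continuation gives $(\tau_{k,0}X)(\varepsilon)=\tau_{k,0}(X(\varepsilon))$ for every $k\in\Z$ and $X\in\Gamma(x)$. The orbit of $(X_0(\varepsilon),X_0(\varepsilon)-X_{-1}(\varepsilon))$ under $T_\varepsilon$ is exactly $(X_i(\varepsilon),X_i(\varepsilon)-X_{i-1}(\varepsilon))_{i\in\Z}$, and the $i$-th point equals $\mathcal{E}((\tau_{i,0}X)(\varepsilon))$, which lies in $C_\varepsilon(x)$ since $\tau_{i,0}X\in\Gamma(x)$. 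The main subtlety I foresee is establishing that $C_\varepsilon(x)$ is genuinely closed in $\R^2$ rather than only as the continuous injective image of a non-compact topological space; it is the second-order character of (\ref{FKmap1}) combined with the Birkhoff boundedness of configurations in $\Gamma(x)$ that makes this step go through.
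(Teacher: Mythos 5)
Your proof is correct and follows essentially the same route as the paper: show $\Gamma(x)$ and hence $\Gamma_\varepsilon(x)$ is a Cantor set, verify that the evaluation into $\R^2$ is a homeomorphism onto its image using the second-order recurrence for injectivity and inverse continuity, and obtain $T_\varepsilon$-invariance from translation invariance of $\Gamma(x)$ and uniqueness of continuation. You are right to flag closedness of the image in $\R^2$ as the delicate point — the paper leaves it implicit, whereas your argument (boundedness of $X^n_0$ via $\delta_0$, Birkhoff boundedness of all $X^n_i$, discreteness plus a diagonal subsequence, closedness of $\Gamma(x)$ and continuity of $\Phi_\varepsilon$) handles it properly.
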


\begin{proof}
Because $x$ takes discrete values, $\Gamma(x)$ is disconnected and hence by Theorem \ref{Cantortheorem} a Cantor subset of $\R^{\Z}$. Theorem \ref{continuous} thus implies that so is $\Gamma_{\varepsilon}(x):=\{X(\varepsilon)\, | \, X\in \Gamma(x)\}$. At the same time, $\Gamma_{\varepsilon}(x)$ consists of solutions to (\ref{FKmap1}) and hence of orbits of the two-dimensional map 
$$t_{\varepsilon}: (X_{i-1}(\varepsilon), X_i(\varepsilon))\mapsto (X_i(\varepsilon), 2X_i(\varepsilon) -X_{i-1}(\varepsilon)+ \varepsilon^{-1}V'(X_i(\varepsilon)))\, . $$ 
This observation makes it clear that the projection $\pi:X(\varepsilon)\mapsto (X_0(\varepsilon), X_1(\varepsilon))$ from $\Gamma_{\varepsilon}(x)$ to $\R^2$ is injective. It is also clear that this projection is continuous. The inverse 
$$\pi^{-1}:(X_0(\varepsilon), X_1(\varepsilon))\mapsto (\ldots, X_{-1}(\varepsilon), X_0(\varepsilon), X_1(\varepsilon), X_2(\varepsilon), \ldots)$$ is continuous as well, because $(X_{i-1}(\varepsilon), X_{i}(\varepsilon))=t_{\varepsilon}^{i-1}(X_0(\varepsilon), X_1(\varepsilon))$ and because $t_{\varepsilon}^{i-1}$ is continuous for all $i\in\Z$. Thus, $\pi$ is a homeomorphism from $\Gamma_{\varepsilon}(x)$ onto its image in $\R^2$. 

The conjugacy $h:(X_{i-1}(\varepsilon), X_{i}(\varepsilon))\mapsto (X_i(\varepsilon), Y_i(\varepsilon)) = (X_i(\varepsilon), X_i(\varepsilon)- X_{i-1}(\varepsilon))$ between $t_{\varepsilon}$ and $T_{\varepsilon}$ maps this image homeomorphically to the $T_{\varepsilon}$-invariant set $\{(X_0(\varepsilon), X_0(\varepsilon) - X_{-1}(\varepsilon))\ |\ X\in\Gamma(x)\}\subset \R^2$.
\end{proof}
\noindent Theorem \ref{disone} implies that a Hamiltonian twist map near a nondegenerate anti-integrable limit has at least an $M-1$-dimensional family of invariant Cantor sets of  rotation number $\omega\in \R\backslash \Q$, where $M$ is the number of geometrically distinct critical points of $V$. Since $M$ is twice the number $N$ of geometrically distinct local minima of $V$, the results in this paper imply that most of these Cantor sets are not remnant circles.
\begin{small}
\bibliographystyle{amsplain}
\bibliography{anticontinuum}
\end{small}
 \end{document}